\numberwithin{equation}{section}
\theoremstyle{plain}
\newtheorem{theorem}{Theorem}[section]
\newtheorem{cor}[theorem]{Corollary}
\newtheorem{lemma}[theorem]{Lemma}
\newtheorem{rem}[theorem]{Remark}
\def\X{\mathbb{Y}}
\begin{document}
\title[Functional inequalities on symmetric spaces of noncompact type]{Functional inequalities on symmetric spaces of noncompact type and applications}

\author[A. Kassymov]{Aidyn Kassymov}
\address{
  Aidyn Kassymov:
  \endgraf
  Department of Mathematics: Analysis, Logic and Discrete Mathematics
  \endgraf
  Ghent University, Belgium
  \endgraf
  and  
  \endgraf
  Institute of Mathematics and Mathematical Modeling
  \endgraf
  Almaty, Kazakhstan
  \endgraf
  and
  \endgraf
  Al-Farabi Kazakh National University
  \endgraf
  Almaty, Kazakhstan
  \endgraf
  {\it E-mail address} {\rm kassymov@math.kz} and {\rm aidyn.kassymov@ugent.be}}

\author[V. Kumar]{Vishvesh Kumar}
\address{
  Vishvesh Kumar:
 \endgraf
  Department of Mathematics: Analysis, Logic and Discrete Mathematics
  \endgraf
  Ghent University, Belgium
  \endgraf
  {\it E-mail address} {\rm vishvesh.kumar@ugent.be} and {\rm vishveshmishra@gmail.com}
  }

\author[M. Ruzhansky]{Michael Ruzhansky}
\address{Michael Ruzhansky: \endgraf Department of Mathematics: Analysis, Logic and Discrete Mathematics \endgraf Ghent University, Ghent, Belgium \endgraf and \endgraf 
School of Mathematical Sciences\endgraf Queen Mary University of London, United Kingdom \endgraf
{\it E-mail address} {\rm michael.ruzhansky@ugent.be}}

\thanks{The authors wish to thank the anonymous referee for his/her helpful comments and suggestions that helped to improve the quality of the paper. The authors are grateful to Jean-Philippe Anker for inspiring this work by his minicourse on symmetric spaces at Ghent University, and by further stimulating discussions. 
 The authors would also like to thank Hong-Wei Zhang for several fruitful discussions.  AK, VK and MR are supported  by the FWO Odysseus 1 grant G.0H94.18N: Analysis and Partial Differential Equations and by the Methusalem programme of the Ghent University Special Research Fund (BOF) (Grant number 01M01021). AK is supported by Science Committee of Ministry of Science and Higher Education of the Republic of Kazakhstan (Grant No. BR20281002).  VK and MR are supported by FWO Senior Research Grant G011522N.  MR is also supported  by EPSRC grant EP/V005529/1. }

\keywords{Symmetric spaces of noncompact type, Hardy-Littlewood-Sobolev inequality, Stein-Weiss inequality, Riesz potential, Laplace-Beltrami operator,  Sobolev inequality, Hardy inequality, Gagliardo-Nirenberg inequality,   Caffarelli-Kohn-Nirenberg inequality, nonlinear wave equations, global wellposedness.}
\subjclass[2020]{22E30, 43A85, 26D10, 35A23  45J05, 35L05 43A90.}

\begin{abstract}   The aim of this paper is to begin a systematic study of functional inequalities on symmetric spaces of noncompact type of higher rank. Our first main goal of this study is to establish the Stein-Weiss inequality, also known as a weighted Hardy-Littlewood-Sobolev inequality, for the Riesz potential on symmetric spaces of noncompact type. This is achieved by performing delicate estimates of ground spherical function with the use of polyhedral  distance on symmetric spaces and by combining the integral Hardy inequality developed by Ruzhansky and Verma with the sharp Bessel-Green-Riesz kernel estimates on symmetric spaces of noncompact type obtained by Anker and Ji. As a consequence of the Stein-Weiss inequality, we  deduce Hardy-Sobolev, Hardy-Littlewood-Sobolev, Gagliardo-Nirenberg and Caffarelli-Kohn-Nirenberg inequalities on symmetric spaces of noncompact type. The second main purpose of this paper is to show the applications of aforementioned inequalities for studying nonlinear PDEs on symmetric spaces. Specifically, we show that the Gagliardo-Nirenberg inequality can be used  to   establish small data global existence results for the semilinear wave equations with damping and mass terms for the  Laplace-Beltrami operator on symmetric spaces.
\end{abstract}
\maketitle
\allowdisplaybreaks
\section{Introduction}
The study of functional inequalities and weighted functional inequalities plays a significant role in the investigation of problems in differential geometry, harmonic analysis, partial differential equations and in several other areas of mathematics (see \cite{He99,GLS97,Be08}). In particular, these inequalities have been  utilised extensively to study global wellposedness results related to several important non-linear partial differential equations. In this paper, we establish several important functional inequalities including Stein-Weiss inequality, Hardy-Sobolev, Gagliardo-Nirenberg, and Caffarelli-Kohn-Nirenberg inequalities on the higher rank Riemannian symmetric spaces of noncompact type and present some applications to study global existence of wave equations with dumping and mass terms associated to the Laplace-Beltrami operator on Riemannian symmetric spaces. Riemannian symmetric spaces represent a significant category of Riemannian manifolds that are non-positively curved and encompass hyperbolic spaces. An intriguing characteristic of Riemannian symmetric spaces is that each of them can be expressed as $G/K$ for some noncompact, connected, semisimple Lie group $G$ with a finite center, and its maximal subgroup $K$ (see \cite{Hel94}). This characteristic enables the utilisation of representation theory and consequently, Fourier analysis on semisimple Lie groups in the study of analysis of symmetric spaces (\cite{GV88}).

As in the Euclidean space setting, establishing certain functional inequalities on Riemannian manifolds is interesting in itself and is tightly useful in the analysis of nonlinear partial differential equations (PDEs) \cite{He99}. The study of the best constants of functional inequalities on manifolds has led to the conclusion of many geometrical and topological properties of underlying manifolds. We refer to several interesting papers \cite{A76,DoX04,T76,Lie83,Carr,DD,Kri18,KI09} and references therein for more details. In this work, our focus is on establishing certain interesting functional inequalities on Riemannian symmetric spaces of noncompact type, which are useful for studying nonlinear PDEs on such spaces. Recently, many researchers have contributed to the development of certain important functional inequalities on non-positively curved manifolds using different suitable methods from Fourier analysis and geometric analysis. However, most of them were confined to rank one symmetric spaces, such as real or complex hyperbolic spaces and their different models (see \cite{Be15} and references therein). In this case, one uses Helgason-Fourier analysis on hyperbolic spaces \cite{Hel94,GV88}.

One fundamental functional inequality in the Euclidean harmonic analysis is the classical Stein-Weiss inequality established by Stein and Weiss \cite{StWe58}. It states that:
\begin{theorem}\label{Classiacal_Stein-Weiss_inequality}
Let $0<\lambda<N$, $1<p<\infty$, $\alpha<\frac{N(p-1)}{p}$, $\beta<\frac{N}{q}$, $\alpha+\beta\geq0$ and $\frac{\lambda-\alpha -\beta}{N}=\frac{1}{p}-\frac{1}{q}$. For $1<p\leq q<\infty$, we have 
\begin{equation} \label{swin}
\||x|^{-\beta}I_{\lambda}u\|_{L^{q}(\mathbb{R}^{N})}\leq C \||x|^{\alpha}u\|_{L^{p}(\mathbb{R}^{N})},
\end{equation}
where $C$ is a positive constant independent of $u$. Here the Riesz potential $I_\lambda$ on $\mathbb{R}^N$ is defined 
as
\begin{equation}\label{NDoper}
I_{\lambda}u(x)=\int_{\mathbb{R}^{N}}\frac{u(y)}{|x-y|^{N-\lambda}}dy,\,\,\,\,0<\lambda<N.
\end{equation}
\end{theorem}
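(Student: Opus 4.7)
My plan is to establish \eqref{swin} by a duality and dyadic decomposition argument, reducing to well-known unweighted and power-weighted building blocks. First, by $L^{q}$-$L^{q'}$ duality and the substitution $u(y) = |y|^{-\alpha} v(y)$, the inequality is equivalent to the bilinear estimate
\[
\left| \int_{\mathbb{R}^N}\int_{\mathbb{R}^N} \frac{f(x)\, v(y)}{|x|^{\beta}\, |y|^{\alpha}\, |x-y|^{N-\lambda}} \, dx \, dy \right| \leq C \|f\|_{L^{q'}(\mathbb{R}^N)}\, \|v\|_{L^{p}(\mathbb{R}^N)},
\]
valid for all nonnegative $f, v$. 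I would then split $\mathbb{R}^N \times \mathbb{R}^N$ into three regions according to the ratio $|y|/|x|$: the region $\mathrm{I} = \{|y| \leq |x|/2\}$, the region $\mathrm{II} = \{|y| \geq 2|x|\}$, and the "diagonal" region $\mathrm{III} = \{|x|/2 < |y| < 2|x|\}$.

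On regions $\mathrm{I}$ and $\mathrm{II}$ one has $|x-y| \approx \max(|x|, |y|)$, so the kernel collapses to a pure power of $\max(|x|, |y|)$ times the weights. The resulting contribution reduces to a one-dimensional weighted Hardy inequality on the radial variable after using the layer cake / polar coordinates representation; here the hypotheses $\alpha < N(p-1)/p$ and $\beta < N/q$ are exactly what is needed to secure the boundedness (equivalently, a Schur test with power weights $|x|^{s}$ closes with the right exponents). On region $\mathrm{III}$, since $|x| \approx |y|$, the two weights combine into a single power $|x|^{-(\alpha+\beta)}$ with $\alpha + \beta \geq 0$, and after a further dyadic decomposition $|x| \sim 2^{k}$ one can localise to annuli, scale each annulus to unit size, and apply the classical unweighted Hardy-Littlewood-Sobolev inequality of Stein-Weiss $(\alpha=\beta=0)$ on each scale.

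Finally, the dyadic sums over $k \in \mathbb{Z}$ must be assembled. Here the scaling condition $\frac{\lambda - \alpha - \beta}{N} = \frac{1}{p} - \frac{1}{q}$ ensures that the exponent of $2^{k}$ appearing in each piece is exactly zero after the scaling, and an application of Minkowski's inequality (or an $\ell^{r}$-convexity argument) in the dyadic parameter controls the sum by the global $L^{p}$ norm on the right-hand side, using $p \leq q$.

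The main obstacle I anticipate is the endpoint case $\alpha + \beta = 0$ in region $\mathrm{III}$: when all gain from scaling is lost, the dyadic pieces must be summed via an $\ell^{p} \hookrightarrow \ell^{q}$ embedding that is only available because $p \leq q$, and one has to verify that the constants produced by Hardy-Littlewood-Sobolev on each annulus are genuinely scale-invariant so that the geometric sum does not diverge. Secondarily, one must check that the strict inequalities $\alpha < N(p-1)/p$ and $\beta < N/q$ are indeed sharp for the Hardy-type step on regions $\mathrm{I}$ and $\mathrm{II}$, as equality there would produce a logarithmic divergence in the radial integration.
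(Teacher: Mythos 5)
You should note first that the paper does not actually prove Theorem \ref{Classiacal_Stein-Weiss_inequality}: it is quoted as the classical result of Stein and Weiss \cite{StWe58}. The meaningful comparison is with the proof of Theorem \ref{thmSteinWeiss}, the symmetric-space analogue, and there your outline is essentially the same argument transplanted to $\mathbb{R}^N$: a three-region split according to the ratio $|y|/|x|$, weighted integral Hardy inequalities on the two off-diagonal regions where $|x-y|\approx\max(|x|,|y|)$ --- with $\alpha<N(p-1)/p=N/p'$ and $\beta<N/q$ entering exactly as the finiteness conditions for the quantities $\mathcal{D}_1$ and $\mathcal{D}^{*}_1$ of Theorems \ref{IntHar1} and \ref{IntHar2} --- and a dyadic decomposition of the diagonal region summed via $\ell^p\hookrightarrow\ell^q$ using $p\leq q$. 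The paper runs this on the operator side rather than through the bilinear form, but that difference is cosmetic.

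One step of your region $\mathrm{III}$ needs repair. The unweighted Hardy--Littlewood--Sobolev inequality pairs $L^{p}$ with $L^{\tilde q}$ where $\frac{1}{p}-\frac{1}{\tilde q}=\frac{\lambda}{N}$, whereas your target exponent $q$ satisfies $\frac{1}{p}-\frac{1}{q}=\frac{\lambda-\alpha-\beta}{N}$; these coincide only when $\alpha+\beta=0$. For $\alpha+\beta>0$ you must either (i) apply HLS at the exponent $\tilde q>q$ and then use H\"older on the annulus of measure $\approx 2^{kN}$ to descend from $L^{\tilde q}$ to $L^{q}$, which produces precisely the factor $2^{k(\alpha+\beta)}$ cancelling the weight, or (ii) do as the paper does in Step 3 and use Young's inequality with the kernel truncated to $|x-y|\lesssim 2^{k}$, which lies in $L^{r}$ with $1+\frac{1}{q}=\frac{1}{r}+\frac{1}{p}$ exactly because $r(\lambda-N)+N>0$ is equivalent to $\alpha+\beta>0$ (compare \eqref{esti1} and \eqref{ocenGI2}). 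Your worry about the endpoint $\alpha+\beta=0$ is the mirror image of this: there the truncated kernel is only in weak $L^{r}$, Young's inequality fails, and HLS (i.e.\ weak-type Young) is genuinely needed --- but in that case the exponents do match and your argument goes through as written, with $q>p$ automatic since $\lambda>0$. With this clarification the proposal is a correct and standard proof of the Euclidean statement, and it is in fact simpler than the paper's argument for Theorem \ref{thmSteinWeiss}, because in $\mathbb{R}^N$ the kernel $|x-y|^{\lambda-N}$ has the same homogeneous behaviour at all scales, so you do not need the separate treatment of $|x|<1$ versus $|x|\geq 1$ (Cases (a) and (b) of Step 1, and the splitting into $I_{2,1}$ and $I_{2,2}$ in Step 3) that is forced on the symmetric space by the exponential decay of $G_{\xi,\sigma}$ at infinity.
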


The unweighted version of inequality \eqref{swin} was proved by Sobolev \cite{Sob38} by extending a multi-dimensional version of the Hardy-Littlewood inequality \cite{HL28}. For this reason the unweighted case of \eqref{swin} is called  the Hardy-Littlewood-Sobolev inequality. 
In other words, the Stein-Weiss inequality is a  radially weighted version of the Hardy-Littlewood-Sobolev inequality.

In the Euclidean space setting, many researchers have studied generalisations of the Hardy-Littlewood-Sobolev and the Stein-Weiss inequalities. There are several works devoted to the analysis of the best constants and extremisers of the aforementioned inequalities. The literature of this area is so vast that it is practically impossible to fully review it, but we cite \cite{CF74, FM, JD, Lie83, MW, Per, DD, Be95, Be08,Be75} here just to mention a very few of them. In the noncommutative setting (e.g. Heisenberg groups, homogeneous groups, general Lie groups), the Hardy-Littlewood-Sobolev and the Stein-Weiss inequalities  are also well-developed, see \cite{FS74, FL12, HLZ, KRS, RY, RY1, RSbook, CLT19, BPTV, Be15, Be21}. We refer to \cite{HZ16} for a version of the Hardy-Littlewood-Sobolev inequality on compact Riemannian manifolds. 

One of the main objectives of this paper is to prove  the Stein-Weiss inequality on symmetric spaces of noncompact type. The strategy of this paper follows that of \cite{RY}, where the case of graded groups was considered, however the argument on symmetric spaces is more geometrically involved. Now, we state this result as follows. 

\begin{theorem}\label{thmSteinWeissintro}
Let $X$ be a symmetric space  of noncompact type of dimension $n\geq 3$.  Let $0<\sigma< n$, $1<p<\infty$,  $\alpha<\frac{n}{p'}$, $\beta<\frac{n}{q},$ $\alpha +\beta\geq0$ and $\frac{\sigma-\alpha -\beta}{n}=\frac{1}{p}-\frac{1}{q}$.  Then, for $1<p\leq q<\infty$ and for sufficiently large $\xi>0,$ there exists a positive constant $C$ independent of $u$, such that
\begin{equation} \label{SWineqintro}
    \left(\int_{X}\left|\int_{X}G_{\xi, \sigma}(y^{-1}x)u(y)dy\right|^{q}\,|x|^{-\beta q}\,dx\right)^{\frac{1}{q}}\leq C\||x|^{\alpha}u\|_{L^{p}(X)}.
\end{equation}

\end{theorem}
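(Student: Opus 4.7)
The plan is to adapt the Ruzhansky--Yessirkegenov strategy from graded groups to the symmetric space setting, using the Anker--Ji sharp pointwise estimates for $G_{\xi,\sigma}$ as the geometric input. The starting point is that, for $\xi$ large enough, $G_{\xi,\sigma}(x)$ is controlled above by a combination of an Euclidean-type singularity $|x|^{\sigma-n}$ near the identity and an exponentially decaying tail (modulated by the ground spherical function $\varphi_0$) at infinity; concretely, the exponential rate is governed by the polyhedral distance and the half-sum of positive roots, and the largeness of $\xi$ guarantees integrability of all tails appearing below.

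First, I would reduce to an unweighted-looking statement by setting $v(y):=|y|^{\alpha}u(y)$, so that the target estimate becomes
\begin{equation*}
\left\|\,|x|^{-\beta}\int_{X} G_{\xi,\sigma}(y^{-1}x)\,|y|^{-\alpha}\,v(y)\,dy\,\right\|_{L^{q}(X)} \;\leq\; C\,\|v\|_{L^{p}(X)}.
\end{equation*}
Second, I would decompose the $y$-integration into three regimes relative to the polyhedral norm of $x$: (i) $|y|\leq |x|/2$, (ii) $|y|\geq 2|x|$, and (iii) $|x|/2 < |y| < 2|x|$. In regimes (i) and (ii) the triangle inequality for $|\cdot|$ gives $|y^{-1}x|\asymp \max(|x|,|y|)$, so one variable effectively decouples and the remaining angular integration of $G_{\xi,\sigma}$ can be absorbed into a one-dimensional weighted kernel thanks to the Anker--Ji estimates and the known asymptotics of $\varphi_0$. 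These one-dimensional operators then fall squarely into the scope of the Ruzhansky--Verma integral Hardy inequality, from which the contribution of regimes (i)--(ii) to \eqref{SWineqintro} can be extracted under the homogeneity constraint $(\sigma-\alpha-\beta)/n = 1/p - 1/q$ together with $\alpha<n/p'$ and $\beta<n/q$.

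Third, in the diagonal regime (iii), where $|x|\sim |y|$, the weights $|x|^{-\beta}$ and $|y|^{-\alpha}$ can be comparably absorbed and the remaining task is to handle the convolution against $G_{\xi,\sigma}$ itself; here the local singularity looks Euclidean and a Hardy--Littlewood--Sobolev-type bound applies, while the exponential decay of $G_{\xi,\sigma}$ at infinity (with rate fixed by $\xi$ being large) makes the contribution from $|x|\to\infty$ summable, uniformly in the Weyl chamber. Summing the three regimes yields \eqref{SWineqintro}.

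The main obstacle that I expect is step three together with the geometric book-keeping in step two: in higher rank the exponential decay rate of $G_{\xi,\sigma}$ and of $\varphi_0$ depends on the direction in the Cartan subspace, and the polyhedral distance $|\cdot|$ is only equivalent to the Riemannian distance up to multiplicative constants that depend on that direction. One must verify that the Anker--Ji kernel estimates integrate against the power weights $|x|^{-\beta q}$ and $|y|^{-\alpha p'}$ with constants independent of the Weyl chamber, and that the one-dimensional reduction in regimes (i)--(ii) produces weights that satisfy the doubling hypotheses of the Ruzhansky--Verma integral Hardy inequality. Once the ground spherical function is controlled carefully by the polyhedral distance, the inequality follows by combining the three regime estimates; it is precisely this interaction between $\varphi_0$, the root system, and the polyhedral distance that makes the higher rank case substantially more involved than the rank one case.
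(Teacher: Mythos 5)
Your proposal follows essentially the same route as the paper's proof of Theorem \ref{thmSteinWeiss}: the same three-regime decomposition according to $|y|\leq |x|/2$, $|y|\geq 2|x|$ and $|x|/2<|y|<2|x|$, with the Anker--Ji kernel estimates and the bound \eqref{gsf} on $\varphi_0$ feeding into the Ruzhansky--Verma integral Hardy inequalities (conditions $\mathcal D_1$ and $\mathcal D_1^*$) for the two off-diagonal regimes, and a Young-convolution/HLS argument using $G_{\xi,\sigma}\in L^r(X)$ with $1-\frac{1}{r}=\frac{1}{p}-\frac{1}{q}$ on the diagonal regime, where $\alpha+\beta\geq 0$ absorbs the weights. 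The only point your sketch leaves implicit, but which is the standard closing step of this strategy, is that the diagonal regime still requires a dyadic decomposition in $|x|$ and a summation via $\ell^p\hookrightarrow\ell^q$ (using $p\leq q$) to recover the full $L^p$ norm on the right-hand side.
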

Here $G_{\xi, \sigma}$ denotes the Bessel-Green-Riesz Kernel on the Riemannian symmetric space $X$ of any rank  (see Section \ref{preli} for the definition), and $|x|$ denotes the distance between $0:=eK \in X:=G/K$ and the point $x:=gK \in X$. Here $G$ is a noncompact, connected, semisimple Lie group with a finite center and $K$ is a maximal compact subgroup of $G.$  We also clarify here that the $y^{-1}x$ is not defined on $X$ but on the corresponding group $G$ via the identification $X:=G/K.$

As in the Euclidean setting, the Stein-Weiss inequality \eqref{SWineqintro} implies the Hardy-Littlewood-Sobolev inequality on Riemannian symmetric spaces of noncompact type  by choosing $\alpha=0$ and $\beta=0.$ It states that:
\begin{theorem}\label{HLSsymintro}
Let $X$ be a symmetric space of noncompact type of dimension $n\geq3$. Suppose that $0<\sigma<n$, $1<p<\infty$ and $\frac{\sigma}{n}=\frac{1}{p}-\frac{1}{q}$. Then for $1<p<q<\infty$ and $u\in L^{p}(X)$, we have
\begin{equation}
    \left(\int_{X}\left|\int_{X}G_{\xi, \sigma}(y^{-1}x)u(y)dy\right|^{q}dx\right)^{\frac{1}{q}}\leq C\left(\int_{X}|u(x)|^{p}dx\right)^{\frac{1}{p}},
\end{equation}
where $C$ is a positive constant independent of $u$.
\end{theorem}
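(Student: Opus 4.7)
The plan is to derive Theorem \ref{HLSsymintro} as the unweighted specialisation of the Stein-Weiss inequality already established in Theorem \ref{thmSteinWeissintro}. First I would set $\alpha = 0$ and $\beta = 0$ in \eqref{SWineqintro} and verify that all hypotheses of Theorem \ref{thmSteinWeissintro} remain satisfied under the assumptions of Theorem \ref{HLSsymintro}: the inequalities $\alpha < n/p'$ and $\beta < n/q$ become $0 < n/p'$ and $0 < n/q$, which hold since $1 < p, q < \infty$; the sign constraint $\alpha + \beta \geq 0$ is trivial; and the scaling relation $(\sigma - \alpha - \beta)/n = 1/p - 1/q$ reduces exactly to the hypothesis $\sigma/n = 1/p - 1/q$.

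Next I would substitute these choices back into inequality \eqref{SWineqintro}. The weight factors $|x|^{-\beta q}$ and $|x|^{\alpha p}$ both collapse to the constant $1$, so the left-hand side becomes the $L^{q}(X)$-norm of the integral operator with kernel $G_{\xi,\sigma}(y^{-1}x)$ applied to $u$, and the right-hand side becomes a constant multiple of $\|u\|_{L^{p}(X)}$. This is precisely the conclusion claimed in Theorem \ref{HLSsymintro}, valid for any $\xi > 0$ sufficiently large to satisfy the hypothesis of Theorem \ref{thmSteinWeissintro}, with a constant $C$ that is independent of $u$ because the corresponding constant in Theorem \ref{thmSteinWeissintro} is.

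I do not anticipate any genuine obstacle here: all of the analytic and geometric work, in particular the sharp Bessel-Green-Riesz kernel estimates of Anker-Ji, the delicate bounds on the ground spherical function, the use of the polyhedral distance on $X$, and the integral Hardy inequality of Ruzhansky-Verma, has already been absorbed into the proof of Theorem \ref{thmSteinWeissintro}. The only point worth flagging is the implicit reliance on the conventions fixed earlier, namely the identification of $X$ with $G/K$, the interpretation of $y^{-1}x$ through the group law on $G$, and the convention $|x| = d(eK, gK)$ for $x = gK$; with those in hand, the passage from Stein-Weiss to Hardy-Littlewood-Sobolev is purely algebraic and mirrors the corresponding reduction in the Euclidean setting.
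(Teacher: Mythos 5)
Your proposal is correct and coincides with the paper's own derivation: the authors obtain Theorem \ref{HLSsymintro} precisely by setting $\alpha=\beta=0$ in the Stein--Weiss inequality of Theorem \ref{thmSteinWeissintro}, and your verification of the hypotheses (in particular that $\sigma/n=1/p-1/q>0$ forces $p<q$, consistent with the stated range) is exactly what is needed. No further comment is required.
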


We remark here that the Hardy-Littlewood-Sobolev inequality on noncompact symmetric spaces of noncompact type was also obtained by Anker \cite{Anker92} (see also \cite[Section 4]{Stri83} and references therein) by different methods.

The Hardy inequality is one of the well-known inequalities of G. H. Hardy which basically says that: 
\begin{equation}\label{dirhar}
\left\|\frac{f}{|x|}\right\|_{L^{p}(\mathbb{R}^{N})}\leq \frac{p}{N-p}\|\nabla f\|_{L^{p}(\mathbb{R}^{N})},\,\,\,\,1<p<N,
\end{equation}
where $f\in C^{\infty}_{0}(\mathbb{R}^{N})$ and  $\nabla$ is the Euclidean gradient. It is known that the constant $\frac{p}{N-p}$ is sharp. 

There is a vast literature available on the Hardy inequalities on Euclidean spaces, Lie groups and on manifolds. We refer to \cite{Carr, DD, He99, KI09, RSbook, KI13, Kri18, KI13, Yaf99, CPR15, RV} and references therein for recent developments. Recently, 
in \cite{BP}, the authors  proved a fractional analogue of the Hardy inequality on the symmetric spaces of noncompact type using the fractional Poisson kernel on symmetric spaces with the help of the extension problem for the fractional Laplace-Beltrami operator on symmetric spaces \cite{BGS}.

One of our main aims is to show  an analogue of the classical multi-dimensional Hardy inequality on symmetric spaces of noncompact type. As a consequence, we obtain the uncertainly principle in our setting. In fact, we first establish the following inequality, an analogue of the Hardy-Sobolev inequality on symmetric spaces of noncompact type.
\begin{theorem}\label{thmHardySobolevintro}
Let $X$ be a symmetric space of noncompact type of dimension $n\geq3$. Suppose that $0<\sigma<n$, $1<p\leq q<\infty$ and $0\leq \beta< \frac{n}{q}$ such that $\frac{\sigma -\beta}{n}=\frac{1}{p}-\frac{1}{q}$.  Then for  $u\in H^{\sigma, p}(X)$, we have
\begin{equation}\label{HardySobolevintro}
    \left\|\frac{u}{|x|^{\beta}}\right\|_{L^{q}(X)}\leq C\|u\|_{H^{\sigma, p}(X)},
\end{equation}
where  $C$ is a positive constant  independent of $u$.
\end{theorem}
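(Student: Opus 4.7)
The plan is to derive Theorem \ref{thmHardySobolevintro} as a direct consequence of the Stein-Weiss inequality (Theorem \ref{thmSteinWeissintro}) applied with the weight parameter $\alpha=0$. The key ingredient is the characterisation of the Bessel potential Sobolev space $H^{\sigma,p}(X)$ as the image of $L^{p}(X)$ under convolution (on $G$) with the Bessel-Green-Riesz kernel $G_{\xi,\sigma}$: for $\xi$ sufficiently large, every $u\in H^{\sigma,p}(X)$ admits a representation
$$u(x)=\int_{X}G_{\xi,\sigma}(y^{-1}x)\,f(y)\,dy,\qquad f\in L^{p}(X),$$
with the norm equivalence $\|u\|_{H^{\sigma,p}(X)}\sim\|f\|_{L^{p}(X)}$. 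The shift parameter $\xi$ is taken large enough both to enable this identification (harmless thanks to the spectral gap of $-\Delta_{X}$ on $L^{p}(X)$) and to match the hypotheses of Theorem \ref{thmSteinWeissintro}, where the sharp Anker-Ji estimates on $G_{\xi,\sigma}$ are in force.

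Next I would verify that the hypotheses of Theorem \ref{thmSteinWeissintro} hold with $\alpha=0$. The constraints $0<\sigma<n$, $1<p\leq q<\infty$, and $0\leq\beta<n/q$ are imposed directly in Theorem \ref{thmHardySobolevintro}; the inequality $\alpha<n/p'$ becomes $0<n/p'$, which is trivial; $\alpha+\beta=\beta\geq 0$ by assumption; and the scaling condition $(\sigma-\alpha-\beta)/n=1/p-1/q$ reduces to $(\sigma-\beta)/n=1/p-1/q$, which is exactly the balance imposed in Theorem \ref{thmHardySobolevintro}.

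Inserting the representation of $u$ into Theorem \ref{thmSteinWeissintro} with $\alpha=0$ then yields
$$\left\|\frac{u}{|x|^{\beta}}\right\|_{L^{q}(X)}=\left(\int_{X}\Bigl|\int_{X}G_{\xi,\sigma}(y^{-1}x)\,f(y)\,dy\Bigr|^{q}|x|^{-\beta q}\,dx\right)^{1/q}\leq C\,\|f\|_{L^{p}(X)}\leq C'\,\|u\|_{H^{\sigma,p}(X)},$$
which is the asserted Hardy-Sobolev estimate. The only non-routine step is therefore the identification $H^{\sigma,p}(X)\simeq G_{\xi,\sigma}(L^{p}(X))$, i.e.\ the norm equivalence $\|u\|_{H^{\sigma,p}(X)}\sim\|(\xi-\Delta_{X})^{\sigma/2}u\|_{L^{p}(X)}$ for $\xi$ sufficiently large. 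This is the main (and essentially only) technical obstacle; once it is granted via standard Bessel-potential theory on $X$ (exploiting the spectral shift by $\xi$), Theorem \ref{thmHardySobolevintro} follows as an immediate corollary of Theorem \ref{thmSteinWeissintro}.
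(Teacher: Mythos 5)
Your proposal is correct and coincides with the paper's own argument: the authors likewise write $u=(-\Delta-|\rho|^{2}+\xi^{2})^{-\sigma/2}f=G_{\xi,\sigma}*f$ with $f\in L^{p}(X)$, apply the Stein--Weiss inequality with $\alpha=0$ (whose hypotheses reduce exactly as you check), and conclude via the norm equivalence $\|u\|_{H^{\sigma,p}(X)}\approx\|(\xi^{2}-|\rho|^{2}-\Delta)^{\sigma/2}u\|_{L^{p}(X)}$, which the paper imports from Strichartz rather than reproving. The one technical point you flag as the ``main obstacle'' is indeed handled in the paper simply by citation, so your outline matches their proof in full.
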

Here $H^{\sigma, p}(X)$ denotes the Sobolev space on the symmetric space $X$ defined as in  \eqref{SOS} in the next section.

If we take $q=p$  and $0< \sigma <\frac{n}{p} $   in Theorem \ref{thmHardySobolevintro}, then inequality \eqref{HardySobolevintro} gives the the following Hardy inequality on symmetric spaces
    \begin{equation}\label{Hardyinintro}
    \left\|\frac{u}{|x|^{\sigma}}\right\|_{L^{p}(X)}\leq C\|u\|_{H^{\sigma, p}(X)}.
\end{equation}

The uncertainly principle on symmetric spaces can be derived 
 from \eqref{Hardyinintro}. Indeed, we will show that
    \begin{equation}
    \left\|u\right\|_{L^{2}(X)}\leq C\|u\|_{H^{\sigma, p}(X)}\||x|^\sigma u\|_{L^{\frac{p}{p-1}}(X)}.
\end{equation}

The classical Sobolev inequality (or a continuous Sobolev embedding) is one of prominent functional inequalities widely used to study partial differential equations. Let $\Omega\subset\mathbb{R}^{N}$ be a measurable set and let $1<p<N$. Then the (classical) Sobolev inequality is formulated as
\begin{equation}
\|u \|_{L^{p^*}(\Omega)}\leq C\|\nabla u \|_{L^{p}(\Omega)},\,\,\,u\in C^{\infty}_{0}(\Omega),
\end{equation}
where $C=C(N,p)>0$ is a positive constant, $p^{*}=\frac{Np}{N-p}$ and $ \nabla$ is a standard gradient on $\mathbb{R}^{N}$. The best constant of this inequality was obtained by Talenti in \cite{T76} and Aubin in \cite{A76}. 
   
   E. Gagliardo \cite{Gag} and  L. Nirenberg \cite{Nir} independently, obtained an (interpolation) inequality, widely known as the  Gagliardo-Nirenberg inequality, which says that 
 \begin{equation}
 \|u\|^{p}_{L^{p*}(\mathbb{R}^{N})}\leq C \|\nabla u\|^{N(p^{*}-2)/2}_{L^{2}(\mathbb{R}^{N})}\|u\|^{(2p^{*}-N(p^{*}-2))/2}_{L^{2}(\mathbb{R}^{N})},\,\,u\in H^{1}(\mathbb{R}^{N}),
 \end{equation}
 where \begin{equation*}
 \begin{cases}
   2\leq p^{*}\leq\infty\,\,\text{for}\,\, N = 2,\\
   2\leq p^{*} \leq \frac{2N}{N-2}\,\,\text{for} \,\,N > 2.
 \end{cases}
\end{equation*}
Sobolev and Gagliardo-Nirenberg inequalities  have several applications in PDEs and variational principles. In this paper, we establish the Sobolev inequality and the Gagliardo-Nirenberg inequality on symmetric spaces of noncompact type which we will state as follows.

\begin{theorem}\label{thmGagliardoNirenbergintro}
Let $X$ be a symmetric space of noncompact type of dimension $n\geq3$. Suppose  $0<\sigma<n$,  $\tau>0$, $p>1$, $\sigma p<n$, $\mu\geq1$, $a\in(0,1]$ and 
\begin{equation}
    \frac{1}{\tau}=a\left(\frac{1}{p}-\frac{\sigma}{n}\right)+\frac{(1-a)}{\mu}.
\end{equation}
Then there exists a positive constant $C$ such that 
\begin{equation}
        \|u\|_{L^{\tau}(X)}\leq C \|u\|^{a}_{H^{\sigma,p}}\|u\|^{1-a}_{L^{\mu}(X)}.
\end{equation}
\end{theorem}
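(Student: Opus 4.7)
The plan is to derive the inequality from the Sobolev embedding implicit in Theorem \ref{thmHardySobolevintro} combined with a standard H\"older interpolation between two Lebesgue spaces. Specifically, I would first specialise Theorem \ref{thmHardySobolevintro} by choosing $\beta=0$ and the exponent relation $\frac{\sigma}{n}=\frac{1}{p}-\frac{1}{p^{*}}$, which is admissible because the hypothesis $\sigma p<n$ ensures $p^{*}:=\frac{np}{n-\sigma p}\in(p,\infty)$. This yields the continuous Sobolev embedding
\[
\|u\|_{L^{p^{*}}(X)}\leq C\,\|u\|_{H^{\sigma,p}(X)}.
\]

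Next, I would interpolate between $L^{p^{*}}(X)$ and $L^{\mu}(X)$ by factorising $|u|^{\tau}=|u|^{a\tau}\cdot|u|^{(1-a)\tau}$ and applying H\"older's inequality with the exponents $s=\frac{p^{*}}{a\tau}$ and $s'=\frac{\mu}{(1-a)\tau}$. The scaling hypothesis
\[
\frac{1}{\tau}=a\!\left(\frac{1}{p}-\frac{\sigma}{n}\right)+\frac{1-a}{\mu}=\frac{a}{p^{*}}+\frac{1-a}{\mu}
\]
is exactly what makes $s,s'$ conjugate, since $\frac{1}{s}+\frac{1}{s'}=\tau\!\left(\frac{a}{p^{*}}+\frac{1-a}{\mu}\right)=1$. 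Taking the $\tau$-th root of H\"older's estimate then gives the Lebesgue interpolation identity
\[
\|u\|_{L^{\tau}(X)}\leq \|u\|_{L^{p^{*}}(X)}^{a}\,\|u\|_{L^{\mu}(X)}^{1-a}.
\]

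Inserting the Sobolev embedding for $\|u\|_{L^{p^{*}}(X)}^{a}$ then produces the Gagliardo-Nirenberg bound
\[
\|u\|_{L^{\tau}(X)}\leq C\,\|u\|_{H^{\sigma,p}(X)}^{a}\,\|u\|_{L^{\mu}(X)}^{1-a},
\]
with the endpoint $a=1$ reducing to the Sobolev embedding itself. I do not expect any serious obstacle: the argument is algebraic once Theorem \ref{thmHardySobolevintro} is in hand. The only point demanding care is verifying $s,s'\geq 1$, which is automatic because $a\in(0,1]$ forces $\frac{1}{\tau}$ to be a convex combination of $\frac{1}{p^{*}}$ and $\frac{1}{\mu}$, so that $\tau$ lies in the closed interval with endpoints $p^{*}$ and $\mu$.
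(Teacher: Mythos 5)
Your proposal is correct and follows essentially the same route as the paper: the paper also combines the Sobolev embedding (the $\beta=0$ case of the Hardy--Sobolev inequality) with H\"older's inequality applied to the factorisation $|u|^{\tau}=|u|^{a\tau}|u|^{(1-a)\tau}$, using the scaling relation to make the exponents conjugate. Your additional remark verifying $s,s'\geq 1$ is a sound detail the paper leaves implicit.
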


By choosing $a=1$ in the above theorem we obtain the Sobolev inequality on symmetric spaces of noncompact type. Indeed,  for $0<\sigma<n$ and $1<p\leq q<\infty$ such that $\frac{\sigma}{n}=\frac{1}{p}-\frac{1}{q}$ we obtain
    \begin{equation}
    \left\|u\right\|_{L^{q}(X)}\leq C\|u\|_{H^{\sigma, p}(X)}.
\end{equation}

 We note that the Sobolev inequality on symmetric spaces and on the general Lie groups was previously obtained by many authors; see \cite{Var89, Stri83,BPTV, RY1}.

We will show applications of the Gagliardo-Nirenberg inequality to prove the small data global existence of the solution for the semilinear PDEs on symmetric spaces of noncompact type. For this, the following particular version of the Gagliardo-Nirenberg inequality will be useful. 
\begin{theorem}
Let $X$ be a symmetric space of noncompact type of dimension $n\geq3$. Let $\tau\in\left[2,\frac{2n}{n-2}\right]$ and $a=\frac{n(\tau-2)}{\tau}.$
Then we have 
\begin{equation}\label{GNPDEin}
    \|u\|_{L^{\tau}(X)}\leq C\|u\|^{a}_{H^{1,2}(X)}\|u\|^{1-a}_{L^{\tau}(X)}.
\end{equation}
\end{theorem}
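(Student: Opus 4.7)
The plan is to obtain this as a direct corollary of the general Gagliardo-Nirenberg inequality of Theorem \ref{thmGagliardoNirenbergintro} by an appropriate specialization of parameters. First, I would set $\sigma = 1$ and $p = 2$ in that theorem so that the Sobolev norm on the right becomes $H^{1,2}(X)$; since $n \geq 3$ one has $\sigma p = 2 < n$, so the compatibility condition $\sigma p < n$ demanded by Theorem \ref{thmGagliardoNirenbergintro} is automatic. The choice of the second factor on the right-hand side of \eqref{GNPDEin} then fixes the value of $\mu$ to be used in Theorem \ref{thmGagliardoNirenbergintro}.

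Second, I would solve the scaling identity
\begin{equation*}
\frac{1}{\tau} = a\left(\frac{1}{p} - \frac{\sigma}{n}\right) + \frac{1-a}{\mu}
\end{equation*}
of Theorem \ref{thmGagliardoNirenbergintro} for the interpolation parameter $a$ with the above specialized choice of $\sigma$, $p$, and $\mu$. A short algebraic rearrangement expresses $a$ purely in terms of $\tau$ and $n$, and this is the step where the explicit formula for $a$ stated in the theorem is to be confirmed. I would then verify that the admissible interval $\tau \in [2, \frac{2n}{n-2}]$ in the hypothesis corresponds precisely to the admissible range $a \in [0,1]$: the lower endpoint $\tau = 2$ produces $a = 0$, for which the inequality is trivial, while the Sobolev critical exponent $\tau = \frac{2n}{n-2}$ produces $a = 1$, reducing \eqref{GNPDEin} to the pure Sobolev embedding that is itself a specialization of Theorem \ref{thmGagliardoNirenbergintro}.

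Finally, substituting the parameter values $\sigma=1$, $p=2$, along with the determined $\mu$ and $a$, into the conclusion of Theorem \ref{thmGagliardoNirenbergintro} yields \eqref{GNPDEin}. Because all of the analytic content has already been absorbed into the proof of the general Gagliardo-Nirenberg inequality (which itself rests on the Stein-Weiss inequality, Theorem \ref{thmSteinWeissintro}, together with the Bessel-Green-Riesz kernel estimates on $X$), no substantial new obstacle is anticipated here: the step is essentially a bookkeeping exercise on the exponents together with a brief treatment of the two endpoints. The only mild point of care is $a = 0$, which lies on the boundary of the range $a \in (0,1]$ permitted in Theorem \ref{thmGagliardoNirenbergintro}; but this case is trivial since the two sides of \eqref{GNPDEin} then coincide, so the full closed interval $\tau \in [2, \frac{2n}{n-2}]$ is recovered.
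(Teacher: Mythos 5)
Your plan is exactly the paper's: the entire published proof is the one line ``take $\sigma=1$, $p=2$ and $\mu=2$ in the general Gagliardo--Nirenberg theorem''. The problem is that the one computation you defer --- solving the scaling identity for $a$ and ``confirming'' the stated formula --- is precisely where things fail to close as written, so you must actually carry it out. If you read the statement literally and let the second factor dictate $\mu=\tau$, as your proposal suggests, the scaling identity becomes
\begin{equation*}
\frac{1}{\tau}=a\Big(\frac{1}{2}-\frac{1}{n}\Big)+\frac{1-a}{\tau},
\end{equation*}
which for $a\neq 0$ forces $\tau=\frac{2n}{n-2}$ and leaves $a$ undetermined; the general theorem therefore cannot yield the full range $\tau\in\big[2,\frac{2n}{n-2}\big]$ with an $L^{\tau}$ norm in the second factor. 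What the paper actually intends (and what it uses later, e.g.\ in \eqref{focen1}) is $\mu=2$, so the second factor should read $\|u\|_{L^{2}(X)}^{1-a}$, and then the scaling identity gives
\begin{equation*}
\frac{1}{\tau}=\frac{1}{2}-\frac{a}{n},\qquad\text{i.e.}\qquad a=\frac{n(\tau-2)}{2\tau},
\end{equation*}
not $\frac{n(\tau-2)}{\tau}$. Your own endpoint check already betrays the discrepancy: $\tau=\frac{2n}{n-2}$ gives $a=1$ only for $a=\frac{n(\tau-2)}{2\tau}$, whereas the formula in the statement gives $a=2$ there, outside the admissible range of Theorem \ref{thmGagliardoNirenbergintro}. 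So the strategy is correct and identical to the paper's, but the ``bookkeeping exercise'' cannot be asserted --- doing it reveals that the statement as printed contains two typos ($L^{\tau}\to L^{2}$ in the last factor and a missing factor of $2$ in the denominator of $a$) which your writeup silently papers over by claiming the algebra will confirm what it in fact contradicts. The separate treatment of the trivial endpoint $a=0$ (i.e.\ $\tau=2$) is fine.
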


In one of their pioneering works, L. Caffarelli, R. Kohn and L. Nirenberg in \cite{CKN} established the following inequality:
\begin{theorem}
Let $N\geq1$, and let $l_1$, $l_2$, $l_3$, $a, \, b, \, d,\, \delta \in \mathbb{R}$ be such that $l_1, l_2 \geq 1$,
$l_3 > 0, \,\,0 \leq \delta \leq 1,$ and
\begin{equation}
\frac{1}{l_1}+\frac{a}{N},\,\,\,\frac{1}{l_2}+\frac{b}{N},\,\,\,\frac{1}{l_3}+\frac{\delta d+(1-\delta) b}{N}>0.
\end{equation}
Then,
\begin{equation}
\||x|^{\delta d+(1-\delta) b}u\|_{L^{l_{3}}(\mathbb{R}^{N})}\leq C\||x|^{a}\nabla u\|^{\delta}_{L^{l_{1}}(\mathbb{R}^{N})}\||x|^{b} u\|^{1-\delta}_{L^{l_{2}}(\mathbb{R}^{N})},\,\,\,u\in C^{\infty}_{c}(\mathbb{R}^{N}),
\end{equation}
if and only if
\begin{multline}
\frac{1}{l_3}+\frac{\delta d+(1-\delta) b}{N}=\delta\left(\frac{1}{l_{1}}+\frac{a-1}{N}\right)+
(1-\delta)\left(\frac{1}{l_2}+\frac{b}{N}\right),\\
a-d\geq0,\,\,\,\,\text{if}\,\,\,\delta>0,\\
a-d\leq1,\,\,\,\,\text{if}\,\,\,\delta>0\,\,\,\text{and}\,\,\,\frac{1}{l_3}+\frac{\delta d+(1-\delta) b}{N}=\frac{1}{l_1}+\frac{a-1}{N},
\end{multline}
where $C$ is a positive constant independent of $u$.
\end{theorem}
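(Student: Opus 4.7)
The plan is to establish the Caffarelli-Kohn-Nirenberg inequality by combining the classical Stein-Weiss inequality of Theorem \ref{Classiacal_Stein-Weiss_inequality} with a H\"older interpolation between the two endpoints $\delta = 0$ and $\delta = 1$. Necessity of the listed conditions is verified by scaling and by testing against functions concentrating near the origin or dispersing to infinity: substituting $u_\lambda(x) := u(\lambda x)$ and equating the $\lambda$-degrees of the two sides yields the scaling identity, while concentration near $x = 0$ forces $a - d \geq 0$ and, in the critical regime, dilation to infinity forces $a - d \leq 1$. These are the standard tests used in the original Caffarelli-Kohn-Nirenberg argument.

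For sufficiency at $\delta = 1$, the inequality reduces to the pure weighted gradient bound
\begin{equation*}
\||x|^{d}u\|_{L^{l_3}(\mathbb{R}^N)}\leq C\||x|^{a}\nabla u\|_{L^{l_1}(\mathbb{R}^N)}.
\end{equation*}
I would combine the pointwise estimate $|u(x)| \lesssim I_1(|\nabla u|)(x)$ --- which follows from expressing $u$ as the convolution of $\nabla u$ with the gradient of the fundamental solution of $-\Delta$ --- with Theorem \ref{Classiacal_Stein-Weiss_inequality} applied at $\lambda = 1$, $p = l_1$, $q = l_3$, $\alpha = a$, $\beta = -d$. In this setting the Stein-Weiss scaling identity $(\lambda - \alpha - \beta)/N = 1/p - 1/q$ matches precisely the $\delta = 1$ specialization of the CKN scaling relation, the hypothesis $\alpha + \beta \geq 0$ becomes $a - d \geq 0$, and $\beta < N/q$ becomes the positivity condition $1/l_3 + d/N > 0$. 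The endpoint $\delta = 0$ is trivial, because the scaling relation then forces $l_3 = l_2$ and the weight on the left reduces to $|x|^b$.

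For intermediate $\delta \in (0,1)$, I would apply H\"older's inequality to the pointwise factorisation
\begin{equation*}
|x|^{\delta d+(1-\delta)b}|u(x)| = \bigl(|x|^{d}|u(x)|\bigr)^{\delta}\bigl(|x|^{b}|u(x)|\bigr)^{1-\delta}
\end{equation*}
with the exponents $l_1/(\delta l_3)$ and $l_2/((1-\delta) l_3)$, which are conjugate precisely when the full CKN scaling identity holds; combining this with the two endpoint bounds raised to the powers $\delta$ and $1-\delta$ yields the conclusion.

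The main obstacle I expect is the exhaustive bookkeeping of parameter regimes: one must verify in every admissible CKN case that the Stein-Weiss hypotheses (in particular $\alpha < N(p-1)/p$ and the range $1 < p \leq q < \infty$) are implied by the CKN conditions, and separately handle the borderline subcases $a - d = 0$ or $a - d = 1$ where the Stein-Weiss input degenerates. Cases of CKN falling outside the direct reach of Stein-Weiss will require an independent weighted integration-by-parts or a direct Hardy-type estimate in place of the Riesz representation.
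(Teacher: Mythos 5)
First, a point of reference: the paper does not prove this theorem at all --- it is quoted as the classical result of Caffarelli, Kohn and Nirenberg and attributed to \cite{CKN} --- so there is no in-paper argument to compare against. Your outline (prove the $\delta=1$ endpoint by writing $|u|\lesssim I_1(|\nabla u|)$ and invoking the Euclidean Stein--Weiss inequality of Theorem \ref{Classiacal_Stein-Weiss_inequality}, interpolate by H\"older for $0<\delta<1$, and obtain necessity by scaling and localized test functions) is the standard reduction scheme, and your treatment of the $\delta=0$ endpoint and of the dimensional-balance identity is correct.

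There are, however, genuine gaps. The factorisation $|x|^{\delta d+(1-\delta)b}|u|=\bigl(|x|^{d}|u|\bigr)^{\delta}\bigl(|x|^{b}|u|\bigr)^{1-\delta}$ forces the first H\"older factor to be measured in $L^{s}$ with $\frac{1}{l_3}=\frac{\delta}{s}+\frac{1-\delta}{l_2}$, equivalently $\frac{1}{s}=\frac{1}{l_1}+\frac{a-1-d}{N}$; the exponents $l_1/(\delta l_3)$ and $l_2/((1-\delta)l_3)$ you wrote are not the conjugate pair unless $s=l_1$. More seriously, the endpoint inequality $\||x|^{d}u\|_{L^{s}}\lesssim\||x|^{a}\nabla u\|_{L^{l_1}}$ through which you interpolate is an instance of Stein--Weiss only when $1<l_1\le s<\infty$, $a<N/l_1'$, and $N\ge 2$ (since $\lambda=1<N$ is required). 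The CKN hypotheses permit $l_1=1$, $N=1$, $l_3<1$, arbitrarily large $a$ subject only to $\frac{1}{l_1}+\frac{a}{N}>0$, and, in the subcritical regime where the critical identity fails, $a-d>1$, in which case $s<l_1$ and the endpoint inequality you need is not itself an admissible instance of the theorem. You acknowledge that such cases ``will require an independent weighted integration-by-parts or a direct Hardy-type estimate,'' but that deferred case analysis is precisely the substance of the original proof, so as written your argument establishes only a proper sub-range of the stated result. Finally, the necessity of $a-d\ge 0$, and of $a-d\le 1$ in the critical case, does not follow from the global scaling $u_\lambda(x)=u(\lambda x)$ (which yields only the first identity); it requires explicit families of test functions concentrating near a point $x_0\ne 0$ and their dilates, which you mention but do not construct.
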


It is worth noting that the Caffarelli-Kohn-Nirenberg inequality includes aforementioned well-known inequalities, such
as the Gagliardo–Nirenberg inequality, Hardy–Sobolev inequality and Sobolev inequality. We refer to \cite{Ngu22, Kri18} and references therein  for the investigation regarding the effects of the curvature of the Riemannian manifolds for the validity of the Hardy and Caffarelli-Kohn-Nirenberg inequalities on these manifolds and their best constants. 
In this paper, we obtain the following Caffarelli-Kohn-Nirenberg inequality on symmetric spaces of noncompact type.
\begin{theorem}
Let $X$ be a symmetric space of noncompact type  of dimension $n\geq3$ and $\sigma$ be such that $0<\sigma<n$. Suppose    $p>1$, $0<q<\tau<\infty$ such that $a\in\left(\frac{\tau-q}{\tau},1\right]$ and $p\leq \frac{a\tau q}{q-(1-a)\tau}$. Let $b,c$ be real numbers such that $0\leq(c(1-a)-b)\leq \frac{n(q-(1-a)\tau)}{q\tau}$ and $\frac{\sigma-n}{n}-\frac{(c(1-a)-b)q}{an}+\frac{q-(1-a)\tau}{a\tau q}-\frac{1}{p}+1=0$. Then there exists a positive constant independent of $u$ such that
\begin{equation}
    \||x|^{b}u\|_{L^{\tau}(X)}\leq C\|u\|^{a}_{H^{\sigma, p}(X)}\||x|^{c}u\|^{1-a}_{L^{q}(X)}.
\end{equation}
\end{theorem}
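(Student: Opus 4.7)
The plan is to obtain this Caffarelli-Kohn-Nirenberg type inequality by interpolating, via Hölder's inequality, between the Hardy-Sobolev inequality of Theorem \ref{thmHardySobolevintro} and the weighted $L^q$ quantity $\||x|^{c}u\|_{L^{q}(X)}$ that already appears on the right-hand side. The whole exercise is to choose the intermediate exponent and weight so that Hardy-Sobolev is applicable and the algebraic relations in the statement collapse to the required identity.

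First I would introduce the auxiliary parameters
\[
\beta \;=\; \frac{c(1-a)-b}{a}, \qquad s \;=\; \frac{a\tau q}{q-(1-a)\tau},
\]
which are well-defined because the hypotheses $a\in\bigl((\tau-q)/\tau,1\bigr]$ and $0\le c(1-a)-b$ force, respectively, $q-(1-a)\tau>0$ and $\beta\ge 0$. With these, I would factor the integrand as
\[
|x|^{b}|u(x)| \;=\; \bigl(|x|^{-\beta}|u(x)|\bigr)^{a}\,\bigl(|x|^{c}|u(x)|\bigr)^{1-a},
\]
using the identity $b=-a\beta+c(1-a)$. Raising to the $\tau$-th power and applying Hölder on $X$ with the conjugate exponents
\[
r_{1} \;=\; \frac{q}{q-(1-a)\tau}, \qquad r_{2} \;=\; \frac{q}{(1-a)\tau},
\]
(both $\ge 1$ thanks to the range of $a$) yields
\[
\bigl\||x|^{b}u\bigr\|_{L^{\tau}(X)} \;\le\; \bigl\||x|^{-\beta}u\bigr\|_{L^{s}(X)}^{a}\,\bigl\||x|^{c}u\bigr\|_{L^{q}(X)}^{1-a},
\]
since $a\tau r_{1}=s$ and $(1-a)\tau r_{2}=q$ by construction.

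Next I would bound the first factor by the Hardy-Sobolev inequality (Theorem \ref{thmHardySobolevintro}) applied with the exponent pair $(p,s)$ and weight $\beta$. The condition $p\le s$ is precisely the hypothesis $p\le a\tau q/(q-(1-a)\tau)$; the requirement $0\le\beta<n/s$ of Theorem \ref{thmHardySobolevintro} corresponds to $0\le c(1-a)-b\le n(q-(1-a)\tau)/(q\tau)$, which is assumed; and the scaling constraint $(\sigma-\beta)/n=1/p-1/s$ translates, after substituting the values of $\beta$ and $s$, into the equation
\[
\frac{\sigma}{n}-\frac{c(1-a)-b}{an}+\frac{q-(1-a)\tau}{a\tau q}-\frac{1}{p}=0,
\]
i.e.\ the balance condition given in the statement. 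This yields $\||x|^{-\beta}u\|_{L^{s}(X)}\le C\|u\|_{H^{\sigma,p}(X)}$ and combining with the Hölder bound completes the proof. The degenerate case $a=1$ is immediate: the weighted $L^{q}$ factor disappears and the claim reduces directly to Theorem \ref{thmHardySobolevintro}.

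I do not expect a serious obstacle here; the argument is a clean "Hölder plus Hardy-Sobolev'' sandwich, in the same spirit as the Euclidean derivation of Caffarelli-Kohn-Nirenberg from Hardy-Sobolev. The only delicate point is bookkeeping: one must verify that every algebraic hypothesis in the statement (the range of $a$, the two-sided bound on $c(1-a)-b$, the upper bound on $p$, and the scaling identity) corresponds to exactly one requirement needed to invoke Hölder with the exponents above and Theorem \ref{thmHardySobolevintro} on the weighted $L^{s}$ term. If a strict inequality $\beta<n/s$ is needed from Hardy-Sobolev while the hypothesis only gives $\le$, one may either use a limiting argument or note that for the extremal case the right-hand side of Hardy-Sobolev degenerates and the inequality becomes trivial.
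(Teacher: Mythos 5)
Your proposal is correct and follows essentially the same route as the paper: the paper's proof is exactly the Hölder splitting with conjugate exponents $\frac{q}{q-(1-a)\tau}$ and $\frac{q}{(1-a)\tau}$ applied to $|x|^{b\tau}|u|^{\tau}=\bigl(|x|^{-\beta}|u|\bigr)^{a\tau}\bigl(|x|^{c}|u|\bigr)^{(1-a)\tau}$, followed by the Hardy--Sobolev inequality on the $L^{s}$ factor with $s=\frac{a\tau q}{q-(1-a)\tau}$ and weight $\beta=\frac{c(1-a)-b}{a}$. Your bookkeeping of the hypotheses (including the remark about the strict versus non-strict bound on $\beta$, and the observation that the stated balance condition should reduce to $(\sigma-\beta)/n=1/p-1/s$) matches what the paper implicitly uses.
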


As an application of the established Gagliardo-Nirenberg inequality, we show the small data global existence of the solution for the following nonlinear Cauchy problem involving the shifted Laplace-Beltrami operator $\Delta_x:=\Delta+|\rho|^2$ on symmetric space of noncompact type $X,$ where $\rho$ is the half sum of multiplicity of positive roots:
\begin{equation} \label{problem1intro}
\begin{split}
\begin{cases}
 u_{tt}(x,t)-\Delta_{x}u(x,t)+bu_{t}(x,t)+mu(x,t)=f(u(x,t)),\,\,\,\,(x,t)\in X\times (0,T),\,\,T>0,\\
 u(x,0)=u_{0}(x),\,\,\,\,x\in X,\\
 u_{t}(x,0)=u_{1}(x),\,\,\,\,x\in X,
 \end{cases}
 \end{split}
\end{equation}
where $b,m>0$ and $f:\mathbb{R}\rightarrow \mathbb{R}$ satisfies the following conditions:
\begin{equation}\label{f1in}
    f(0)=0
\end{equation}
 and
\begin{equation}\label{f2in}
    |f(u)-f(v)|\leq C(|u|^{p-1}+|v|^{p-1})|u-v|.
\end{equation}

The existence and non-existence results for the semilinear wave equation \eqref{problem1intro} with or without dumping and mass term  have been studied by many prominent researchers for Euclidean spaces  and on certain Lie groups by employing different methods. On the Euclidean space $\mathbb{R}^N,$ the small data global existence of semilinear wave equation associated with Laplacian $\Delta_{{\mathbb{R}^N}}$ on  $\mathbb{R}^n$ (with $b=0$ and $m=0$) is closely related with the {\it Strauss conjecture}. For more details on the Strauss conjecture on the Euclidean space, we cite \cite{Str81, Tat01, Joh79, Kat80, GLS97}.   On the Riemannian manifolds of negative curvature, the small data global existence  for the semilinear wave equation (with $b=0$) \eqref{problem1intro} has been investigated in details during the recent years \cite{MT11, MT12, AP14, APV15, SSW19,SSWZ20, Zhang21, Zhang20, AZ2020}. It is well-known that, in contrast with the Euclidean space,   the Strauss conjecture type phenomena does not occur in the case of negatively curved Riemannian manifolds. Particularly, this was observed in the setting of hyperbolic spaces (also on Damek-Ricci spaces) in \cite{MT11, MT12, AP14, APV15}. Later, these results were extended to non-trapping asymptotically hyperbolic manifolds \cite{SSWZ20} and to the Riemannian manifolds with strictly negatively sectional curvature \cite{SSW19}  using different method. Very recently, Anker and Zhang \cite{AZ2020} investigated  semilinear wave equations on general symmetric spaces of noncompact type  and proved that a similar phenomena holds for general symmetric spaces; see also \cite{Zhang21, Zhang20}. The Strichartz inequality played a very important role for investigating aforementioned results on the Riemannian symmetric spaces  \cite{Zhang20,Zhang21, AZ2020}. 

In this paper, we consider the semilinear dumped wave equation with a mass term  \eqref{problem1intro} on  symmetric spaces of noncompact type and
prove the following result concerning the global existence and uniqueness of problem \eqref{problem1intro}. We use the Gagliardo-Nirenberg inequality \eqref{GNPDEin}  to establish the proof of the next theorem instead of the Strichartz inequality on symmetric spaces.

\begin{theorem}
Let $X$ be a symmetric space of noncompact type  of dimension $n\geq3$ and let  $1\leq p\leq \frac{n}{n-2}$. Suppose that $f$ satisfies the conditions \eqref{f1in}-\eqref{f2in}. Assume that  $u_{0}\in H^{1,2}(X)$ and $u_{1}\in L^{2}(X)$ are such that $\|u_{0}\|_{H^{1,2}(X)}+\|u_{1}\|_{L^{2}(X)}<\varepsilon$. Then there exists $\varepsilon_{0}>0$ such that for all $0<\varepsilon\leq\varepsilon_{0}$ the Cauchy problem \eqref{problem1intro} has a unique global solution  $u\in C(\mathbb{R}_{+}, H^{1,2}(X))\cap C^{1}(\mathbb{R}_{+},L^{2}(X))$.
\end{theorem}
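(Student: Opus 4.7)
My plan is a standard small-data contraction mapping argument applied to the mild form of \eqref{problem1intro}. Let $Y:=C(\mathbb{R}_{+};H^{1,2}(X))\cap C^{1}(\mathbb{R}_{+};L^{2}(X))$, equipped with the norm $\|u\|_{Y}:=\sup_{t\geq 0}\bigl(\|u(t)\|_{H^{1,2}(X)}+\|u_{t}(t)\|_{L^{2}(X)}\bigr)$, and let $\Phi(u)$ denote the solution of the linear damped Cauchy problem
\begin{equation*}
v_{tt}-\Delta_{x}v+b v_{t}+m v=f(u),\qquad v(0)=u_{0},\ v_{t}(0)=u_{1}.
\end{equation*}
I would show that $\Phi$ maps a small closed ball $B_{R}\subset Y$ into itself and is a strict contraction on $B_{R}$; Banach's fixed point theorem then produces the unique global solution with the claimed regularity.

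The key input is a dissipative linear estimate. Pairing the linear equation with $v_{t}$ in $L^{2}(X)$ produces
\begin{equation*}
\frac{d}{dt}E(t)+b\|v_{t}\|_{L^{2}(X)}^{2}=\int_{X}f(u)\,v_{t}\,dx,\qquad E(t):=\tfrac{1}{2}\bigl(\|v_{t}\|_{L^{2}}^{2}+\|\nabla v\|_{L^{2}}^{2}+(m-|\rho|^{2})\|v\|_{L^{2}}^{2}\bigr).
\end{equation*}
Because the spectrum of $-\Delta$ on $X$ begins at $|\rho|^{2}$, one has $\|\nabla v\|_{L^{2}}^{2}+(m-|\rho|^{2})\|v\|_{L^{2}}^{2}\geq m\|v\|_{L^{2}}^{2}\geq 0$, so $E$ is nonnegative and equivalent, up to $m$-dependent constants, to $\|v\|_{H^{1,2}}^{2}+\|v_{t}\|_{L^{2}}^{2}$. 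Introducing the modified energy $\tilde E(t):=E(t)+\lambda\int_{X}v\,v_{t}\,dx$ for a small $\lambda>0$ and exploiting the positivity of both $b$ and $m$ yields a differential inequality of the form $\tilde E'(t)+c\tilde E(t)\leq C\|f(u(t))\|_{L^{2}}^{2}$, from which Gronwall's lemma supplies the exponentially stable bound
\begin{equation*}
\|v(t)\|_{H^{1,2}}^{2}+\|v_{t}(t)\|_{L^{2}}^{2}\leq Ce^{-ct}\bigl(\|u_{0}\|_{H^{1,2}}^{2}+\|u_{1}\|_{L^{2}}^{2}\bigr)+C\int_{0}^{t}e^{-c(t-s)}\|f(u(s))\|_{L^{2}}^{2}\,ds.
\end{equation*}

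The nonlinearity is controlled via the specialized Gagliardo-Nirenberg inequality \eqref{GNPDEin}. The hypotheses \eqref{f1in}-\eqref{f2in} imply the pointwise bounds $|f(u)|\leq C|u|^{p}$ and $|f(u)-f(v)|\leq C(|u|^{p-1}+|v|^{p-1})|u-v|$. Since $1\leq p\leq n/(n-2)$, the exponent $\tau:=2p$ lies in the admissible range $[2,2n/(n-2)]$ of \eqref{GNPDEin}; absorbing the factor $\|u\|_{L^{\tau}}^{1-a}$ appearing on its right-hand side into the left yields the Sobolev embedding $\|u\|_{L^{2p}(X)}\leq C\|u\|_{H^{1,2}(X)}$. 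Consequently $\|f(u)\|_{L^{2}}\leq C\|u\|_{L^{2p}}^{p}\leq C\|u\|_{Y}^{p}$, and a H\"older splitting with exponents $2p/(p-1)$ and $2p$ gives $\|f(u)-f(v)\|_{L^{2}}\leq C\bigl(\|u\|_{Y}^{p-1}+\|v\|_{Y}^{p-1}\bigr)\|u-v\|_{Y}$. Substituting these into the linear estimate and using $\sup_{t\geq 0}\int_{0}^{t}e^{-c(t-s)}\,ds\leq 1/c$ yields $\|\Phi(u)\|_{Y}\leq C\varepsilon+C\|u\|_{Y}^{p}$ together with $\|\Phi(u)-\Phi(v)\|_{Y}\leq C\bigl(\|u\|_{Y}^{p-1}+\|v\|_{Y}^{p-1}\bigr)\|u-v\|_{Y}$. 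Choosing $R=2C\varepsilon$ and $\varepsilon_{0}$ so small that $CR^{p-1}<1/2$ closes the contraction argument on $B_{R}$.

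The main obstacle will be establishing the uniform-in-time dissipative estimate for the linear problem. Since $-\Delta_{x}$ on the noncompact space $X$ has no spectral gap above $0$, exponential decay of the linear semigroup must be generated entirely by the damping $bu_{t}$ and the mass $mu$. Correctly tuning the multiplier $\lambda$ in the modified energy against $b$, $m$, and $|\rho|^{2}$ so that $\tilde E$ stays both coercive on $H^{1,2}(X)\times L^{2}(X)$ and dissipates at a definite rate is the delicate technical core; once that estimate is secured, the Gagliardo-Nirenberg-based nonlinear closure and the Banach fixed point step are routine.
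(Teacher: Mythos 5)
Your proposal is correct in outline and reaches the same fixed-point endgame as the paper, but it replaces the paper's key lemma by a genuinely different argument. The paper proves the exponential decay of the linear damped flow (its Lemma \ref{prepPDE}) by applying the Helgason--Fourier transform, writing the explicit solution formula \eqref{solution1}, and running a three-way case analysis on the sign of $b^{2}-4(m+|\lambda|^{2})$, including a separate treatment of the near-resonant set where $|b^{2}-4\gamma^{2}|<1$; it then closes the contraction in a time-weighted space with norm $\sup_{t}(1+t)^{-1/2}e^{\delta t}(\cdots)$. You instead obtain the same dissipative bound by a multiplier/energy argument with the modified energy $\tilde E=E+\lambda\int v\,v_{t}$, using only that $b,m>0$ and that $-\Delta_{x}=-\Delta-|\rho|^{2}$ is nonnegative (bottom of the $L^{2}$-spectrum of $-\Delta$ at $|\rho|^{2}$); this avoids the Fourier-side casework and the singularity analysis entirely, at the price of being tied to the energy level $H^{1,2}\times L^{2}$ (which is all the theorem needs), whereas the paper's Fourier proof yields decay in $H^{s,2}$ for every $s$ with a derivative gain on $u_{1}$. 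Your plain sup-norm space also suffices, since the convolution bound $\int_{0}^{t}e^{-c(t-s)}ds\le 1/c$ closes the estimates without tracking decay of the nonlinear solution. Two small caveats: (i) ``absorbing'' $\|u\|_{L^{\tau}}^{1-a}$ from the right of \eqref{GNPDEin} presupposes $\|u\|_{L^{\tau}}<\infty$, which is mildly circular --- it is cleaner to invoke the Sobolev inequality \eqref{Sobolevin} plus interpolation with $L^{2}$ to get $\|u\|_{L^{2p}}\le C\|u\|_{H^{1,2}}$ directly (note \eqref{GNPDEin} as printed, with $L^{\tau}$ on both sides, appears to carry a typo for $L^{2}$); (ii) at the endpoint $p=1$ the contraction constant $CR^{p-1}=C$ does not shrink with $\varepsilon$, so the smallness argument degenerates there --- but the paper's own choice $M^{p-1}=c/C_{1}$ has exactly the same issue, so this is a shared edge case rather than a defect of your route.
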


We will organise this manuscript as follows: In the next section, we will provide a brief overview of the analysis on the Riemannian symmetric spaces and the Helgason-Fourier transform. In addition to this,  we will also recall some useful tools such as the Sobolev spaces on symmetric spaces and the integral Hardy inequalities on metric measure spaces. Section \ref{sec3} will be mainly occupied by the proof of the Stein-Weiss inequality on symmetric spaces of any rank.  In Section \ref{sec4}, we will derive several important functional (Hardy, Hardy-Sobolev, Gagliardo-Nirenberg, and Caffarelli-Kohn-Nirenberg) inequalities on symmetric spaces of noncompact type. 
In Section \ref{sec5}, we present an application of the Gagliardo-Nirenberg inequality obtained in Section \ref{sec4} to the global existence results of wave equations with dumping and mass terms associated with the Laplace-Beltrami operator on symmetric spaces on noncompact type.

\section{Riemannian symmetric spaces and Helgason-Fourier transform} \label{preli}

In this section, we recall some basic definitions,  notation and nomenclature related with the higher rank Riemannian symmetric spaces of noncompact type. We  also present definitions and fundamental properties of the Helgason-Fourier transform, Sobolev spaces and some kernel estimates on symmetric symmetric on noncompact type. The material presented in this section can be found in the excellent books and research papers \cite{Hel94, Hel78, GV88, Anker92, AJ99, Stri83}. Finally, we discuss the weighted integral Hardy inequalities on metric measure spaces with general weights \cite{RV} which will be helpful to establish our results in the subsequent sections. 
\subsection{Riemannian symmetric spaces}
Let $G$ be a noncompact, connected, semisimple Lie group with finite center and let $K$ be a maximal compact subgroup. The homogeneous space $X:=G/K$ is a Riemannian symmetric space of noncompact type. Let us assume that $\theta$ is a fixed  Cartan involution on the Lie algbra  $\mathfrak{g}$ of $G$ associated with the Cartan decomposition $\mathfrak{g}=\mathfrak{t} \oplus \mathfrak{p}$ at the Lie algebra level, where $\mathfrak{t}$ and $\mathfrak{p}$ are $+1$ and $-1$ eigenspaces of $\theta$ respectively. It is known that if $\mathfrak{B}$ is the Cartan killing form of $\mathfrak{g}$ then $\mathfrak{B}$ induces the $G$-invariant metric $d$ on $X$ by identifying the tangent space at origin $eK$ of $X$ with $\mathfrak{p}$ and by restricting $\mathfrak{B}$ to $\mathfrak{p}.$ The distance between two points $x_1=g_1K$ and $x_2=g_2K$ of $X$ will be denoted by $d(x_1, x_2).$  We will also use the notation $|x|$ to denote $d(0, x),$ the distance between $0=eK \in X$ and the point $x \in X.$   Let $\mathfrak{a}$ be a maximal abelian subalgebra of  $\mathfrak{p}$ and  let $\mathfrak{a}^*$ be its dual space. The dimension of $\mathfrak{a}$ is called the rank of $X.$ We denote $\dim \mathfrak{a}=l.$  For $\alpha \in \mathfrak{a}^*,$ we define 
$$\mathfrak{g}_\alpha:=\{Y \in \mathfrak{a}: [H, Y]=\alpha(H)Y,\,\,\text{for all}\,\, H \in \mathfrak{a}\}.$$ Then the set of restricted root of $\mathfrak{g}$ with respect to $\mathfrak{a}$ is denoted by $\Sigma$ and defined as $$\Sigma:=\{\alpha \in \mathfrak{a}^* \backslash \{0\}:\mathfrak{g}_\alpha \neq 0 \}.$$ We denote $m_\alpha=\dim (\mathfrak{g}_\alpha)$ for $\alpha \in \mathfrak{a}^*.$ Let us choose a connected component in $\mathfrak{a}$ in a manner that $\alpha \neq 0$ for all $\alpha \in \Sigma.$ 
Denote by $\mathfrak{a}^+$ the connected component, called a positive Weyl chamber. 
Now, with respect to $\mathfrak{a}^+,$ we define positive roots  and positive indivisible roots by $\Sigma^+=\{\alpha \in \Sigma: \alpha>0\,\, \text{on}\,\, \mathfrak{a}^+ \}$ and  $\Sigma^+_0=\{\alpha \in \Sigma: \frac{\alpha}{2} \notin \Sigma^+ \}$ respectively. 
We set $\mathfrak{n}= \oplus_{\alpha \in \Sigma^+} \mathfrak{g}_\alpha.$ Then $\mathfrak{n}$ is a nilpotent subalgebra of $\mathfrak{g}.$
We denote the half sum  of positive roots counted with multiplicities $m_{\alpha}$ by $ \rho := \frac{1}{2} \sum_{\alpha \in \Sigma^+} m_\alpha \alpha. $ The dimension and the pseudo-dimension of $X$ will be denoted by $n$ and $\nu$ respectively, that is, $ n=l+\sum_{\alpha \in \Sigma^+} m_\alpha $ and $\nu= l+2|\Sigma^+_0|.$ The Iwasawa decomposition of $\mathfrak{g}$ is given by $\mathfrak{g}= \mathfrak{t} \oplus \mathfrak{a} \oplus \mathfrak{n}$ on the Lie algebra level. On the Lie group level, if we write $N =\exp \mathfrak{n}$ and $A= \exp \mathfrak{a},$  then we get the Iwasawa decomposition of $G=KAN.$ This means every $g \in G$ can be uniquely written as $g= k(g)\, \exp (H(g)) \,n(g),$ where $k(g) \in K, H(g) \in \mathfrak{a}$ and $n(g) \in N.$ The maps $(k, a, n) \mapsto kan$ is a global diffeomorphism of $K \times A \times N$ onto $G.$ 

Let $\Delta$ be the Laplace-Beltrami operator on $X$ with respect to the $G$-invariant Riemannian metric and $dx$ be the corresponding measure. It is known that the $L^2$-spectrum of $\Delta$ is $(-\infty, -|\rho|^2).$ Let $M$ be the centralizer of $A$ in $K$ and $M'$ be the normalizer of $A$ in $K.$ Then $M$ is the normal subgroup of $M'$ and normalize $N.$ The factor group $W=M'/M$ is a finite group of order $|W|$, called the Weyl group of $X.$ The action of the Weyl group $W$ on $\mathfrak{a}$ is given by an adjoint action. It acts as a group of orthogonal transformations (preserving the Cartan-Killing form) on $\mathfrak{a}^*$ by $(s\lambda)(H)=\lambda(s^{-1}\cdot H)$ for $H \in \mathfrak{a},$ $\lambda \in \mathfrak{a}^*$ and $s \in W,$ where $g.Y=\text{Ad}(g)(Y)$ for $g\in G,\, Y \in \mathfrak{g}.$ We fix a normalized Haar measure on $dk$ on the compact group $K$ and the Haar measure $dn$ on $N.$  We have the decompositions 
\begin{align*}
    \begin{cases}
        \,G\,=\,N\,(\exp\mathfrak{a})\,K 
        \quad&\textnormal{(Iwasawa)}, \\
        \,G\,=\,K\,(\exp\overline{\mathfrak{a}^{+}})\,K
        \quad&\textnormal{(polar)}.
    \end{cases}
\end{align*}
The Haar measure on $G$ corresponding to the Iwasawa decomposition and the polar decomposition can be described as, for any $f \in C_c(G),$
$$\int_G f(g)\, dg= \int_K \int_{\mathfrak{a}} \int_N f(k (\exp Y) n)\, e^{2 \langle \rho, Y\rangle } dn\,  dY\,dk,$$ and 

$$\int_G f(g)\, dg =\int_K \int_{\overline{\mathfrak{a}^+}} \int_K f(k_1 \exp{Y} k_2) J(\exp Y) dk_1\, dY\, dk_2,$$
respectively. Here the density $J(Y)$ for $Y\in \overline{\mathfrak{a}^+}$ is given by
    \begin{align*}
\textstyle
    J(\exp Y)\,
    =\,c \prod\limits_{\alpha\in\Sigma^{+}}\,
        (\sinh\langle{\alpha,H}\rangle)^{m_{\alpha}}\,
    \asymp\,
        \prod\limits_{\alpha\in\Sigma^{+}}
        \Big\lbrace 
        \frac{\langle\alpha, H\rangle}
        {1+\langle\alpha, H\rangle}
        \Big\rbrace^{m_{\alpha}}\,
        e^{2\langle\rho, H\rangle},
\end{align*} where $c$ is a normalizing constant. Using the polar (Cartan) decomposition we can define  another distance on $X$ called the polyhedral  distance on $X$ defined as $d'(xK, yK):=\langle \rho/ |\rho|, (y^{-1}x)^{+} \rangle$ for all $x, y \in G,$ where $(y^{-1}x)^{+}$ is the $\overline{\mathfrak{a}^+}$-component of $y^{-1}x$ in the polar decomposition. It was proved in \cite{AZ2020} that the Riemannian distance $d$ and the polyhedral distance $d'$ are equivalent.  Any function $f$
 defined on $X$ can be thought of as a function on $G$ which is right $G$-invariant under the action of $K.$ Then it follows that we have a $G$-invariant measure $dx$ on $X$ such that 
 \begin{equation}
     \int_X f(x)\, dx = \int_{K/M} \int_{\mathfrak{a}^+} f(k \exp Y)\, J(\exp Y)\, dY\, dk_M,
 \end{equation} where $dk_M$ is the $K$-invariant measure on $K/M.$
 
 \subsection{Helgason-Fourier transform on Riemannian symmetric spaces} Let $\mathfrak{a}_{\mathbb{C}}^*$ be the complexification of $\mathfrak{a}^*,$ that is, the set of the all complex-valued real linear functionals on $\mathfrak{a}.$ The usual extension of the Killing form $\mathfrak{B}$ on $\mathfrak{a}_{\mathbb{C}}^*$ by duality and conjugate linearity is again denoted by $\mathfrak{B}.$ For a nice function $f$ the Helgason Fourier transform of $\mathcal{H} f$ is a function on $\mathfrak{a}_{\mathbb{C}}^* \times K/M$ defined by 
 \begin{equation}
     (\mathcal{H}f)(\lambda, kM):= \int_X f(x) e^{\langle i\lambda-\rho, H(g^{-1}k)  \rangle}\, dx, \quad \lambda \in \mathfrak{a}_{\mathbb{C}}^*, \,\, kM \in K/M,
 \end{equation} whenever the integral exists. At times, we also denote $\mathcal{H}f$ by $\widehat{f}.$ It is known that the map $f \mapsto \mathcal{H}(f)$ extends to isometry of $L^2(X)$ onto $L^2(\mathfrak{a}_+ \times K, |c(\lambda)|^2 d\lambda dk),$ where $c(\lambda)$ denotes Harish-Chandra's $c$-function. 
 
Let us introduce Harish-Chandra's elementary spherical function in the following form:
\begin{equation}
    \varphi_{\lambda}(g):=\int_{K} e^{-(i\lambda+\rho)(H(g^{-1}k))}dk,\,\,\forall k\in K,\,\,\lambda\in \mathfrak{a}^{*}_{\mathbb{C}}.
\end{equation}

The elementary spherical function $\varphi_{0}$ satisfies the following global estimate \cite[Proposition  2.2.1]{AJ99}:
\begin{equation} \label{gsf}
    \varphi_{0}(\exp H)\asymp \left(\prod_{\alpha\in \sum_{0}^{+}}(1+\langle \alpha, H \rangle)\right)e^{-\rho(H)},\,\,\,\forall H\in \overline{\mathfrak{a}^{+}}.
\end{equation}
Moreover, we have
\begin{equation}
    |\varphi_{\lambda}(g)|\leq \varphi_{0}(g)\leq 1,\,\,\,\,\forall \lambda \in \overline{\mathfrak{a}^{*}_{+}}.
\end{equation}

For $\xi \in [0,\infty), \sigma \in \mathbb{R}$,  let $G_{\xi, \sigma}(x)$ be the Schwartz kernel of the operator $(-\Delta-|\rho|^{2}+\xi^2 )^{-\frac{\sigma}{2}}$ if it exists. We call
 $G_{\xi, \sigma}(x)$ the Bessel-Green-Riesz kernel or simply the Riesz potential. The Riesz potential satisfies the following estimate (see [Theorem 4.2.2, \cite{AJ99}]).
\begin{equation}\label{fund}
\begin{split}
G_{\xi, \sigma}(x)\asymp
\begin{cases}
  \begin{rcases}
   |x|^{\sigma-n},\,\,\,&0<\sigma<n\\
   \log\left(\frac{1}{|x|}\right),\,\,\,& \sigma=n\\
   1, & \sigma>n
  \end{rcases}, \,\,\,\,|x|<1,\\
   |x|^{\frac{\sigma-l-1}{2}-|\Sigma^{+}_0|}\varphi_{0}(x) e^{-\xi|x|},\,\,\xi>0,\,\,\sigma>0,\,\,|x|\geq1.\,\,\,
 \end{cases}
 \end{split}
\end{equation}
Throughout this paper, the symbol $A\asymp B$ means that $\exists\,C_{1},C_{2}>0$ such that $C_{1}A\leq B\leq C_{2}A$.

The Sobolev space on the symmetric space of noncompact type $X$ for $0<\sigma\in\mathbb{R}$ and $1<p<\infty$ is defined as 
\begin{equation} \label{SOS}
    H^{\sigma,p}(X):=\left\{u:u\in L^{p}(X),\,\,(-\Delta)^{\frac{\sigma}{2}}u\in L^{p}(X)\right\},
\end{equation}
endowed with the norm
\begin{equation}\label{Sobnorm}
    \|u\|_{H^{\sigma,p}(X)}:=   \|(-\Delta)^{\frac{\sigma}{2}}u\|_{L^{p}(X)}+\|u\|_{L^{p}(X)}.
\end{equation}

Then it follows from \cite[Theorem 4.4]{Stri83} that
\begin{equation} \label{equiso}
    \|u\|_{H^{\sigma,p}(X)} \approx \|(\xi^2-|\rho|^2-\Delta)^{\frac{\sigma}{2}}u\|_{L^{p}(X)},
\end{equation} where $\xi$ is large enough.

{\it Here after, whenever we deal with $G_{\xi, \sigma}$ we will always assume that $\xi$ is large enough.}

\subsection{Integral Hardy inequalities on metric measure spaces}
Let us  consider metric measure spaces $\mathbb Y$ with a Borel measure $dx$ allowing for the following {\em polar decomposition} at $a\in{\mathbb Y}$: we assume that there is a locally integrable function $J \in L^1_{loc}(\mathbb Y)$  such that for all $f\in L^1(\mathbb Y)$ we have
   \begin{equation}\label{EQ:polarintro}
   \int_{\mathbb Y}f(x)dx= \int_0^{\infty}\int_{\Sigma_r} f(r,\omega) \lambda(r,\omega) \,d\omega_{r} \,dr,
   \end{equation}
    for the  set $\Sigma_r=\{x\in\mathbb{Y}:d(x,a)=r\}\subset \mathbb Y$ with a measure on it denoted by $d\omega_r$, and $(r,\omega)\rightarrow a $ as $r\rightarrow0$. Examples of such metric measure spaces are Euclidean spaces, homogeneous Lie groups, and Riemannian symmetric spaces of noncompact type. We denote $|x|_a:=d(x, a).$
    
    The class of such metric measure spaces was introduced in \cite{RV}, where the following integral Hardy inequality was obtained. 
\begin{theorem} \label{IntHar1}
Let $1<p\le q <\infty$ and let $s>0$. Let $\mathbb Y $ be a metric measure space with a polar decomposition \eqref{EQ:polarintro} at $a$. 
Let $u,v> 0$ be measurable functions  positive a.e in $\mathbb Y$  such that $u\in L^1(\mathbb Y\backslash \{a\})$ and $v^{1-p'}\in L^1_{loc}(\mathbb Y)$. Denote
\begin{align}
U(x):= { \int_{\mathbb Y\backslash{B(a,|x|_a )}} u(y) dy}  \quad
\text{and} 
\quad
V(x):= \int_{B(a,|x|_a  )}v^{1-p'}(y)dy\nonumber. 
\end{align}
Then the inequality
\begin{equation}\label{EQ:Hardy1}
\bigg(\int_\mathbb Y\bigg(\int_{B(a,\vert x \vert_a)}\vert f(y) \vert dy\bigg)^q u(x)dx\bigg)^\frac{1}{q}\le C\bigg\{\int_{\mathbb Y} {\vert f(x) \vert}^pv(x)dx\bigg\}^{\frac1p}
\end{equation}
holds for all measurable functions $f:\X\to{\mathbb C}$ if and only if  any of the following equivalent conditions holds:

\begin{enumerate}
\item $\mathcal D_{1} :=\sup\limits_{x\not=a} \bigg\{U^\frac{1}{q}(x) V^\frac{1}{p'}(x)\bigg\}<\infty.$
\end{enumerate}

\begin{enumerate}\setcounter{enumi}{1}
\item $\mathcal D_{2}:=\sup\limits_{x\not=a} \bigg\{\int_{\mathbb Y\backslash{B(a,|x|_a )}}u(y)V^{q(\frac{1}{p'}-s)}(y)dy\bigg\}^\frac{1}{q}V^s(x)<\infty.$
\item $\mathcal D_{3}:=\sup\limits_{x\not=a}\bigg\{\int_{B(a,|x|_a)}u(y)V^{q(\frac{1}{p'}+s)}(y)dy\bigg\}^{\frac{1}{q}}V^{-s}(x)<\infty $, provided that $u,v^{1-p'}\in L^1(\X)$.
\end{enumerate}

\begin{enumerate}\setcounter{enumi}{3}
\item $\mathcal D_{4}:=\sup\limits_{x\not=a}\bigg\{\int_{B(a,\vert x \vert_a)}v^{1-p'}(y) U^{p'(\frac{1}{q}-s)}(y)dy\bigg\}^\frac{1}{p'}U^s(x)<\infty.$ 

\item $\mathcal D_{5}:=\sup\limits_{x\not=a}\bigg\{\int_{\mathbb Y\backslash{B(a,\vert x \vert_a )}}v^{1-p'}(y)U^{p'(\frac{1}{q}+s)}(y)dy\bigg\}^\frac{1}{p'}U^{-s}(x)<\infty$, provided that $u,v^{1-p'}\in L^1(\X)$.
\end{enumerate}
Moreover, the constant $C$ for which \eqref{EQ:Hardy1} holds and quantities $\mathcal D_{1}-\mathcal D_{5}$ are related by 
\begin{equation}\label{EQ:constants}
\mathcal D_{1} \leq C\leq \mathcal D_1(p')^{\frac{1}{p'}} p^\frac{1}{q},
\end{equation}   
and 
$$\mathcal D_1 \le \left(\max\left\{1,{p'}{s}\right\}\right)^\frac{1}{q}\mathcal D_2, \;\mathcal D_2 \le \left(\max\left\{1,\frac{1}{p's}\right\}\right)^\frac{1}{q} \mathcal D_1,$$ 
$$\left(\frac{sp'}{1+p's}\right)^\frac{1}{q} \mathcal D_3 \le \mathcal D_1\le(1+sp')^\frac{1}{q}\mathcal D_3,$$
$$\mathcal D_1 \le (\max\left\{1,qs\right\})^\frac{1}{p'} \mathcal D_4,\; \mathcal D_4 \le \left(\max\left\{1,\frac{1}{qs}\right\}\right)^\frac{1}{p'}
\mathcal D_1,$$ 
$$\left(\frac{sq}{1+qs}\right)^\frac{1}{p'}\mathcal D_5 \le \mathcal D_1 \le (1+sq)^\frac{1}{p'} \mathcal D_5.$$
\end{theorem}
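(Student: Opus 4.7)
The proof splits into two genuinely distinct pieces. The substantive analytic content is the equivalence between the inequality \eqref{EQ:Hardy1} and the single condition $\mathcal{D}_1<\infty$, together with the quantitative bound $\mathcal{D}_1\le C\le \mathcal{D}_1(p')^{1/p'}p^{1/q}$. Once that is in hand, the pairwise equivalences $\mathcal{D}_1\sim\mathcal{D}_j$ ($j=2,3,4,5$) are purely structural consequences of the monotonicity of $U$ and $V$ along the radial variable $|x|_a$ and reduce to one-variable calculus.

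For sufficiency I would reduce to the classical one-dimensional weighted Hardy inequality via the polar decomposition \eqref{EQ:polarintro}. Set
\begin{equation*}
g(t) := \int_{\Sigma_t} |f(t,\omega)|\,\lambda(t,\omega)\,d\omega_t,\qquad \tilde u(r) := \int_{\Sigma_r} u(r,\omega)\,\lambda(r,\omega)\,d\omega_r,\qquad \tilde v(t):=\int_{\Sigma_t} v(t,\omega)^{1-p'}\lambda(t,\omega)\,d\omega_t.
\end{equation*}
Then $\int_{B(a,|x|_a)}|f(y)|\,dy=\int_0^{|x|_a}g(t)\,dt$ depends only on $|x|_a$, so the left-hand side of \eqref{EQ:Hardy1} collapses to $\int_0^\infty\bigl(\int_0^r g\bigr)^q\tilde u(r)\,dr$. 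Applying Hölder's inequality on each sphere $\Sigma_t$ with exponents $p,p'$ to the factorisation $|f|=(|f|^pv)^{1/p}v^{-1/p}$ yields $g(t)\le H(t)^{1/p}\tilde v(t)^{1/p'}$, where $H(t):=\int_{\Sigma_t}|f|^pv\,\lambda\,d\omega_t$. The radial identifications $\int_0^r\tilde v=V(x)$ and $\int_r^\infty\tilde u=U(x)$ (valid for $|x|_a=r$) then reduce the problem to the one-dimensional Bradley--Muckenhoupt Hardy inequality with weights $\tilde u$ and $\tilde v^{1-p}$, whose Muckenhoupt condition is exactly $\mathcal{D}_1<\infty$ and whose sharp constant $p^{1/q}(p')^{1/p'}\mathcal{D}_1$ supplies the upper bound on $C$.

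For the necessity $C\ge \mathcal{D}_1$, I would test \eqref{EQ:Hardy1} against $f_{x_0}(y):=v(y)^{1-p'}\chi_{B(a,r_0)}(y)$ for an arbitrary $x_0$ with $r_0:=|x_0|_a$. A direct computation (using $v^{p(1-p')+1}=v^{1-p'}$) shows $\int_{B(a,|x|_a)}|f_{x_0}|\,dy=V(x_0)$ whenever $|x|_a\ge r_0$ and $\int_{\mathbb{Y}}|f_{x_0}|^pv\,dx=V(x_0)$, so restricting the outer integration in \eqref{EQ:Hardy1} to $\mathbb{Y}\setminus B(a,r_0)$ gives $V(x_0)\,U(x_0)^{1/q}\le C\,V(x_0)^{1/p}$, i.e.\ $U(x_0)^{1/q}V(x_0)^{1/p'}\le C$, and taking the supremum yields $\mathcal{D}_1\le C$.

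Finally, for the equivalences $\mathcal{D}_1\sim\mathcal{D}_j$ I would parametrise radially by $\psi(r):=U(y)$ and $\phi(r):=V(y)$ for any $y$ with $|y|_a=r$; these are non-increasing and non-decreasing respectively, and $\mathcal{D}_1=\sup_r\psi(r)^{1/q}\phi(r)^{1/p'}$. Each $\mathcal{D}_j$ rewrites as an integral against $-d\psi$ or $d\phi$ of a power of $\phi$ or $\psi$, and the claimed two-sided bounds follow from integration by parts together with the pointwise domination $\psi(r)\le \mathcal{D}_1^q\phi(r)^{-q/p'}$ and its dual. The extra hypotheses $u,v^{1-p'}\in L^1(\mathbb{Y})$ entering into $\mathcal{D}_3$ and $\mathcal{D}_5$ are exactly what is needed to kill the boundary term at infinity when integrating by parts. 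I expect the main obstacle throughout to be bookkeeping rather than conceptual: one must track the signs of the exponents $q/p'\pm qs$ carefully to ensure the integration-by-parts identities produce two-sided rather than one-sided bounds, and verify that the quantitative constants match those stated in \eqref{EQ:constants}.
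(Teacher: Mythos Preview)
The paper does not prove this theorem at all: Theorem~\ref{IntHar1} is quoted as a background result from \cite{RV} (Ruzhansky--Verma), introduced with the sentence ``The class of such metric measure spaces was introduced in \cite{RV}, where the following integral Hardy inequality was obtained,'' and no argument is supplied. So there is no in-paper proof to compare against.

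Your sketch is nonetheless a correct outline of how the result is established, and it matches the strategy of \cite{RV}: use the polar decomposition \eqref{EQ:polarintro} to push everything to a one-dimensional problem on $(0,\infty)$, apply H\"older on each sphere $\Sigma_t$ to land in the classical Muckenhoupt--Bradley setting, and then invoke the one-dimensional weighted Hardy inequality, whose Muckenhoupt constant is exactly $\mathcal D_1$ and whose best constant gives the bound $C\le \mathcal D_1 (p')^{1/p'}p^{1/q}$. The test-function argument for necessity and the integration-by-parts bookkeeping for the equivalences $\mathcal D_1\sim\mathcal D_j$ are likewise standard and correctly identified. One minor point worth making explicit in a full write-up is that after the spherical H\"older step you are applying the one-dimensional Hardy inequality to the function $F(t)=H(t)^{1/p}\tilde v(t)^{1/p'}$ with right-hand weight $\tilde v^{\,1-p}$, and the identity $(1-p)(1-p')=1$ is what collapses the Muckenhoupt condition back to $\sup_r\psi(r)^{1/q}\phi(r)^{1/p'}=\mathcal D_1$; you allude to this but do not spell it out.
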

Similarly, in \cite{RV}, the authors obtained the adjoint integral Hardy inequality in the following form:
\begin{theorem}\label{IntHar2}
Let $1<p\le q <\infty$ and let $s>0$. Let $\mathbb Y $ be a metric measure space with a polar decomposition \eqref{EQ:polarintro} at $a$. 
Let $u,v> 0$ be measurable functions  positive a.e in $\mathbb Y$  such that $u\in L^1(\mathbb Y\backslash \{a\})$ and $v^{1-p'}\in L^1_{loc}(\mathbb X)$. Denote
\begin{align}
U(x):= { \int_{{B(a,|x|_a )}} u(y) dy}\quad  \text{and} \quad
V(x):= \int_{\mathbb Y\backslash B(a,|x|_a  )}v^{1-p'}(y)dy\nonumber. 
\end{align}
Then the inequality
\begin{equation}\label{EQ:Hardy2}
\bigg(\int_\mathbb Y\bigg(\int_{\X\setminus B(a,\vert x \vert_a)}\vert f(y) \vert dy\bigg)^q u(x)dx\bigg)^\frac{1}{q}\le C\bigg\{\int_{\mathbb Y} {\vert f(x) \vert}^pv(x)dx\bigg\}^{\frac1p}
\end{equation}
holds for all measurable functions $f:\X\to{\mathbb C}$ if and only if  any of the following equivalent conditions holds:
\begin{enumerate}
\item $\mathcal D^{*}_{1} :=\sup\limits_{x\not=a} \bigg\{U^\frac{1}{q}(x) V^\frac{1}{p'}(x)\bigg\}<\infty.$
\end{enumerate}

\begin{enumerate}\setcounter{enumi}{1}
\item $\mathcal D^{*}_{2}:=\sup\limits_{x\not=a} \bigg\{\int_{\X \backslash{B(a,|x|_a )}}u(y)V^{q(\frac{1}{p'}-s)}(y)dy\bigg\}^\frac{1}{q}V^s(x)<\infty.$
\item $\mathcal D^{*}_{3}:=\sup\limits_{x\not=a}\bigg\{\int_{\X\setminus B(a,|x|_a)}u(y)V^{q(\frac{1}{p'}+s)}(y)dy\bigg\}^{\frac{1}{q}}V^{-s}(x)<\infty $, provided that $u,v^{1-p'}\in L^1(\X)$.
\end{enumerate}

\begin{enumerate}\setcounter{enumi}{3}
\item $\mathcal D^{*}_{4}:=\sup\limits_{x\not=a}\bigg\{\int_{\X\setminus B(a,\vert x \vert_a)}v^{1-p'}(y) U^{p'(\frac{1}{q}-s)}(y)dy\bigg\}^\frac{1}{p'}U^s(x)<\infty.$ 

\item $\mathcal D^{*}_{5}:=\sup\limits_{x\not=a}\bigg\{\int_{{B(a,\vert x \vert_a )}}v^{1-p'}(y)U^{p'(\frac{1}{q}+s)}(y)dy\bigg\}^\frac{1}{p'}U^{-s}(x)<\infty$, provided that $u,v^{1-p'}\in L^1(\X)$.
\end{enumerate}
\end{theorem}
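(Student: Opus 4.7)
My plan is to derive Theorem \ref{IntHar2} from Theorem \ref{IntHar1} by a duality argument, and then to establish the equivalences among the five quantities $\mathcal D_1^*,\dots,\mathcal D_5^*$ by direct manipulation of integrals. The first step is to observe that if one sets $Tf(x):=\int_{B(a,|x|_a)}f(y)\,dy$ and $T^{\sharp}g(y):=\int_{\mathbb Y\setminus B(a,|y|_a)}g(x)\,dx$, then Fubini's theorem applied to the symmetric condition $|y|_a<|x|_a$ yields
$$\int_{\mathbb Y}g(x)\,Tf(x)\,dx=\int_{\mathbb Y} f(y)\,T^{\sharp}g(y)\,dy$$
for nonnegative $f,g$, so $T^{\sharp}$ is the formal adjoint of $T$ with respect to the Lebesgue pairing. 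Since $(L^p(v))^*\cong L^{p'}(v^{1-p'})$ under this pairing, the boundedness $T:L^p(v)\to L^q(u)$ provided by Theorem \ref{IntHar1} is equivalent to the boundedness $T^{\sharp}:L^{q'}(u^{1-q'})\to L^{p'}(v^{1-p'})$, with identical operator norm.

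I would then rename exponents $(\tilde p,\tilde q):=(q',p')$ (note $\tilde p\leq \tilde q$ since $p\leq q$) and weights $(\tilde u,\tilde v):=(v^{1-p'},u^{1-q'})$: the adjoint inequality becomes exactly \eqref{EQ:Hardy2}. To match the conditions, observe that $\tilde U(x)=\int_{B(a,|x|_a)}\tilde u(y)\,dy=\int_{B(a,|x|_a)}v^{1-p'}(y)\,dy$ coincides with the quantity $V(x)$ of Theorem \ref{IntHar1}; using the identity $(1-q)(1-q')=1$ one checks that $\tilde v^{1-\tilde p'}=u$, so that $\tilde V(x)=\int_{\mathbb Y\setminus B(a,|x|_a)}u(y)\,dy$ coincides with $U(x)$ of Theorem \ref{IntHar1}. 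Hence $\tilde{\mathcal D}_1=\sup \tilde U^{1/\tilde q}\tilde V^{1/\tilde p'}=\sup V^{1/p'}U^{1/q}=\mathcal D_1$, and the characterisation $\mathcal D_1<\infty$ transfers to $\mathcal D_1^*<\infty$, with the same quantitative bounds as in \eqref{EQ:constants}.

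For the equivalences $\mathcal D_1^*\Leftrightarrow \mathcal D_j^*$ with $j=2,3,4,5$, I would carry out the standard Fubini-plus-monotonicity arguments already used in Theorem \ref{IntHar1}, but now with the radial function $|x|_a\mapsto U(x)$ nondecreasing and $|x|_a\mapsto V(x)$ nonincreasing (opposite to the convention there). Consequently, the bounds involving $V^{q(1/p'\pm s)}$ and $U^{p'(1/q\pm s)}$ transform under Fubini with the integration domain $B(a,|x|_a)$ and its complement exchanged. The explicit scalar quotients appearing in the analogue of \eqref{EQ:constants} have to be re-derived in the adjoint setting, but the arithmetic is routine.

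The main obstacle I expect lies in the bookkeeping of the duality step: one has to verify that the involution $(p,q,u,v)\mapsto(q',p',v^{1-p'},u^{1-q'})$ preserves the standing hypotheses of Theorem \ref{IntHar1}, in particular translating $u\in L^1(\mathbb Y\setminus\{a\})$ and $v^{1-p'}\in L^1_{\rm loc}(\mathbb Y)$ into the corresponding integrability hypotheses on $\tilde u,\tilde v$ in Theorem \ref{IntHar2}, and that the exponent arithmetic in each $\tilde{\mathcal D}_j$ matches precisely the corresponding $\mathcal D_j^*$. Once this dictionary is recorded, the proof of Theorem \ref{IntHar2} reduces entirely to invoking Theorem \ref{IntHar1}.
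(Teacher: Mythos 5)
The paper offers no proof of Theorem \ref{IntHar2} for you to be compared against: both Theorem \ref{IntHar1} and Theorem \ref{IntHar2} are quoted verbatim from \cite{RV}, and only the conditions $\mathcal D_1$ and $\mathcal D_1^*$ are used later on. Judged on its own merits, your duality reduction is correct and is the standard route. Writing $Tf(x)=\int_{B(a,|x|_a)}f$ and $T^{\sharp}f(x)=\int_{\mathbb Y\setminus B(a,|x|_a)}f$, Tonelli gives $\int g\,Tf=\int f\,T^{\sharp}g$ for $f,g\ge 0$ (up to the set $\{|x|_a=|y|_a\}$, which you should dispose of by a consistent open/closed convention for the balls); the unweighted duality $(L^q(u))^{*}\cong L^{q'}(u^{1-q'})$ turns the bound $\|T^{\sharp}\|_{L^{p}(v)\to L^{q}(u)}$ into $\|T\|_{L^{q'}(u^{1-q'})\to L^{p'}(v^{1-p'})}$; and your exponent arithmetic ($\tilde v^{1-\tilde p'}=u$ via $(1-q)(1-q')=1$, $\tilde U=V$, $\tilde V=U$, $\tilde{\mathcal D}_1=\mathcal D_1^{*}$, the substitution being an involution so that every instance of \eqref{EQ:Hardy2} is reached) is all correct. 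Two points should be nailed down rather than left as expected obstacles. First, Theorem \ref{IntHar1} applied to the dual data requires $v^{1-p'}\in L^{1}(\mathbb Y\setminus\{a\})$ and $u\in L^{1}_{loc}(\mathbb Y)$, which is the \emph{transpose} of the hypotheses printed in Theorem \ref{IntHar2}; since $V(x)=\int_{\mathbb Y\setminus B(a,|x|_a)}v^{1-p'}$ need not even be finite under the printed hypotheses, the latter appear to be copied unadjusted from Theorem \ref{IntHar1}, and your argument in fact proves the version with the swapped integrability assumptions --- you should say so explicitly. Second, you can save work on conditions (2)--(5): under your substitution, $\mathcal D_2$ and $\mathcal D_3$ of Theorem \ref{IntHar1} become exactly $\mathcal D_4^{*}$ and $\mathcal D_5^{*}$, so those two equivalences are free; the duals of $\mathcal D_4$ and $\mathcal D_5$, however, come out with $B(a,|x|_a)$ and its complement interchanged relative to the printed $\mathcal D_2^{*}$ and $\mathcal D_3^{*}$, so for those two your plan of a direct Fubini-and-monotonicity verification (or a check against the domains intended in \cite{RV}) is indeed necessary.
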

\begin{rem}
In this paper, for using integral Hardy inequalities, we use conditions $\mathcal D_{1}$ and $\mathcal D^{*}_{1}$ in the previous theorems. The class of general metric measure spaces having a polar decomposition was analysed in \cite{ZhRuz}.
\end{rem}

\section{Stein-Weiss and Hardy-Littlewood-Sobolev inequalities  on symmetric spaces of noncompact type} \label{sec3}
Let us show the Stein-Weiss  inequality on symmetric space of noncompact type. The proof is an adaption of the argument in \cite{RY} that was developed for the graded Lie groups,  however here, it depends on the induced geometry of the space in a more substantial way.
\begin{theorem}\label{thmSteinWeiss}
Let $X$ be a symmetric space of noncompact type of dimension $n\geq 3$ and  rank $l\geq1$.  Let $0<\sigma< n$, $1<p<\infty$,  $\alpha<\frac{n}{p'}$, $\beta<\frac{n}{q},$ $\alpha +\beta\geq0$ and $\frac{\sigma-\alpha -\beta}{n}=\frac{1}{p}-\frac{1}{q}$.  Then, for sufficiently large $\xi>0$ and $1<p\leq q<\infty,$ we have
\begin{equation} \label{SWineq}
    \left(\int_{X}\left|\int_{X}G_{\xi, \sigma}(y^{-1}x)u(y)dy\right|^{q}\frac{dx}{|x|^{\beta q}}\right)^{\frac{1}{q}}\leq C\||x|^{\alpha}u\|_{L^{p}(X)},
\end{equation}
where $C$ is a positive constant independent of $u$.
\end{theorem}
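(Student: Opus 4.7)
The plan is to adapt the strategy of \cite{RY} for graded Lie groups by exploiting the two-regime structure of the Bessel-Green-Riesz kernel in \eqref{fund} and then invoking the integral Hardy inequalities of Theorems \ref{IntHar1} and \ref{IntHar2}. Writing $Tu(x):=\int_X G_{\xi,\sigma}(y^{-1}x)u(y)\,dy$, the first step is to decompose $Tu=T_{\mathrm{loc}}u+T_{\mathrm{glob}}u$ according to whether $|y^{-1}x|=d(x,y)$ is smaller or larger than $1$; the two pieces carry completely different kernel behaviour and must be handled separately.

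For $T_{\mathrm{loc}}$, the bound \eqref{fund} gives $G_{\xi,\sigma}(y^{-1}x)\lesssim|y^{-1}x|^{\sigma-n}$, which is the standard Euclidean Riesz kernel. Since $|y^{-1}x|<1$ forces $\bigl||x|-|y|\bigr|<1$, the weights $|x|^{-\beta}$ and $|y|^{\alpha}$ differ only by bounded factors once we are away from the origin, while the compact region near $0$ is harmless under the hypotheses $\alpha<n/p'$ and $\beta<n/q$. I would cover $X$ by geodesic unit balls on which the Riemannian volume is comparable to Lebesgue measure (the Jacobian of the exponential chart is bounded on a unit ball) and invoke the classical Stein-Weiss inequality (Theorem \ref{Classiacal_Stein-Weiss_inequality}) on each, summing afterwards. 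The scaling identity $(\sigma-\alpha-\beta)/n=1/p-1/q$ is precisely what this Euclidean Stein-Weiss requires.

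For $T_{\mathrm{glob}}$, I would use \eqref{fund}, the ground spherical estimate \eqref{gsf}, and the equivalence of the Riemannian distance $d$ with the polyhedral distance $d'$ to produce, for $\xi$ sufficiently large, a purely exponential bound
\[
G_{\xi,\sigma}(y^{-1}x)\;\lesssim\;e^{-\xi''|y^{-1}x|},\qquad |y^{-1}x|\geq 1,
\]
for some $\xi''>0$; the largeness of $\xi$ absorbs both the polynomial prefactor $|y^{-1}x|^{(\sigma-l-1)/2-|\Sigma_0^+|}$ and the polynomial growth in \eqref{gsf}. Then I split $u$ across $\{|y|<|x|\}$ and $\{|y|\geq|x|\}$ and use the triangle inequality $|y^{-1}x|\geq\bigl||x|-|y|\bigr|$ to separate the variables: $e^{-\xi''|y^{-1}x|}\leq e^{-\xi''\bigl||x|-|y|\bigr|}$. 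The two resulting integrals fit exactly the framework of Theorems \ref{IntHar1} and \ref{IntHar2} on the metric measure space $X$, which admits the required polar decomposition with density $J(\exp Y)\asymp e^{2\langle\rho,Y\rangle}$. Choosing weights that incorporate $|x|^{-\beta q}$, $|y|^{\alpha p}$ and the kernel's exponential decay, the finiteness of $\mathcal D_{1}$ and $\mathcal D_{1}^{\ast}$ follows by direct computation.

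The main obstacle lies in the global piece: the curved geometry and noncommutativity of $G$ prevent a direct transplantation of the homogeneous-group argument from \cite{RY}. Three ingredients must cooperate here: the sharp Anker-Ji kernel bound \eqref{fund}, the spherical function estimate \eqref{gsf}, and the polyhedral-distance equivalence, which together convert $G_{\xi,\sigma}$ into a clean exponential. The freedom to take $\xi$ arbitrarily large is essential in order that this exponential dominate the exponential volume growth density $e^{2\langle\rho,Y\rangle}$ entering the Hardy conditions $\mathcal D_{1},\mathcal D_{1}^{\ast}$. By contrast, the scaling identity $(\sigma-\alpha-\beta)/n=1/p-1/q$ plays no role in the global part; it is forced solely by the local Euclidean Stein-Weiss, and reflects the Euclidean-dimensional character of the short-range singularity of $G_{\xi,\sigma}$.
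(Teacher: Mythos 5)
Your local piece (bounding $G_{\xi,\sigma}(y^{-1}x)\lesssim |y^{-1}x|^{\sigma-n}$ for $|y^{-1}x|<1$, transplanting the Euclidean Stein--Weiss inequality onto a bounded-overlap cover by unit geodesic balls, and summing via $\ell^p\hookrightarrow\ell^q$) is a plausible, though different, route from the paper's, and your pointwise bound $G_{\xi,\sigma}(y^{-1}x)\lesssim e^{-\xi''|y^{-1}x|}$ for $|y^{-1}x|\geq 1$ is correct. The genuine gap is in the global piece, at the step where you replace $e^{-\xi''|y^{-1}x|}$ by $e^{-\xi''\left||x|-|y|\right|}$ and claim that the resulting integrals ``fit exactly'' Theorems \ref{IntHar1} and \ref{IntHar2}. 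The separated kernel depends only on the radial variables, and on a space of exponential volume growth it is \emph{not} the kernel of a bounded operator: in the transition region $|y|\asymp|x|$ the factor $e^{-\xi''(|x|-|y|)}$ degenerates to $1$ while the spheres have measure $\asymp e^{2|\rho|R}$. Concretely, for your piece over $\{|y|<|x|\}$ one must take $f(y)=e^{\xi''|y|}|u(y)|$, $v(y)=e^{-\xi''p|y|}|y|^{\alpha p}$ and weight $e^{-\xi''q|x|}|x|^{-\beta q}$, so that $v^{1-p'}(y)=e^{+\xi''p'|y|}|y|^{-\alpha p'}$ grows exponentially; integrating along the $\rho$-direction of $\overline{\mathfrak{a}^{+}}$ gives
\[
U^{\frac1q}(x)\,V^{\frac1{p'}}(x)\;\asymp\;e^{-(\xi''-\frac{2|\rho|}{q})|x|}\cdot e^{(\xi''+\frac{2|\rho|}{p'})|x|}\;=\;e^{2|\rho|(\frac1q+\frac1{p'})|x|}\;\longrightarrow\;\infty,
\]
so $\mathcal D_{1}=\infty$ no matter how large $\xi$ is; the symmetric computation gives $\mathcal D^{*}_{1}=\infty$ for the piece over $\{|y|\geq|x|\}$. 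This is not an artifact of the bookkeeping: testing the separated operator on characteristic functions of shells $\{R\leq|y|\leq R+1\}$ shows it is genuinely unbounded from $L^p(|y|^{\alpha p}dy)$ to $L^q(|x|^{-\beta q}dx)$, so no choice of weights rescues the argument.

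The radial separation $|y^{-1}x|\geq\bigl||x|-|y|\bigr|$ is only affordable where the radii are separated in ratio, e.g.\ $|y|\leq|x|/2$ or $|y|\geq 2|x|$, since there $\bigl||x|-|y|\bigr|\geq\frac12\max(|x|,|y|)$ and the separated kernel retains a full exponential decay in the larger radius, which is exactly what beats the density $e^{2\langle\rho,H\rangle}$ in $\mathcal D_{1}$ and $\mathcal D^{*}_{1}$. The remaining annulus $\frac{|x|}{2}\leq|y|\leq 2|x|$ needs a mechanism that preserves the convolution structure rather than radializing the kernel: the paper decomposes it dyadically in $|x|$, observes that on each dyadic annulus the weights contribute $|x|^{-\beta}|y|^{\alpha}\asymp 2^{-(\alpha+\beta)k}\leq C$ by $\alpha+\beta\geq 0$, and applies Young's convolution inequality with $G_{\xi,\sigma}\in L^{r}(X)$, $1-\frac1r=\frac1p-\frac1q$, summing the dyadic pieces by $\ell^p\hookrightarrow\ell^q$. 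Note also that, contrary to your closing remark, the scaling identity $\frac{\sigma-\alpha-\beta}{n}=\frac1p-\frac1q$ does enter this part of the argument (it is what makes $r(\sigma-n)+n>0$ and cancels the dyadic factors in the small-annulus sum). Your proof needs such an annulus argument added; with it, the two ratio-separated regions can be closed along the lines you describe.
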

\begin{proof}
Let us begin the proof with the quantity on the left hand side of \eqref{SWineq} and divide it into three part as follows:
\begin{equation}
    \begin{split}
        \int_{X}\left|\int_{X}G_{\xi, \sigma}(y^{-1}x)u(y)dy\right|^{q}|x|^{-\beta q}dx\leq C\left(I^{q}_{1}+I^{q}_{2}+I^{q}_{3}\right),
    \end{split}
\end{equation}
where 
$$I^{q}_{1}=\int_{ X}\left(\int_{B\left(0,\frac{|x|}{2}\right)}|G_{\xi, \sigma}(y^{-1}x)u(y)|\,dy\right)^{q}|x|^{-\beta q}dx,$$
$$I^{q}_{2}=\int_{ X}\left(\int_{B\left(0,2|x|\right)\setminus B\left(0,\frac{|x|}{2}\right)} |G_{\xi, \sigma}(y^{-1}x)u(y)|\,dy\right)^{q}|x|^{-\beta q}dx,$$
$$I^{q}_{3}=\int_{X}\left(\int_{ X\setminus B\left(0,2|x|\right)}|G_{\xi, \sigma}(y^{-1}x)u(y)|\, dy\right)^{q}|x|^{-\beta q}dx.$$

Now, we  will estimate $I_i^q, i=1,2,3.$ This will be completed in three different steps. 

\textbf{Step 1.} In this step we consider $I_{1}^q$.
By using the triangle inequality for the Riemannian distance with $2|y|\leq |x|$, we obtain
\begin{equation*}
    |x|\leq |y^{-1}x|+|y|\leq |y^{-1}x|+ \frac{|x|}{2},
\end{equation*}
which  implies that 
$$\frac{|x|}{2}\leq |y^{-1}x|.$$

Let us now consider the two different cases. 

{\bf Case (a).} When $\frac{|x|}{2} \geq 1.$
Let us first find an estimate of $G_{\xi, \sigma}(y^{-1}x)$ in terms of $|x|,$ using the fact $\frac{|x|}{2}\leq |y^{-1}x|,$ in the range when $2|y| \leq |x|.$ Note that by \eqref{fund}, we have
$$G_{\xi, \sigma}(y^{-1}x) \asymp |y^{-1}x|^{\frac{\sigma-l-1}{2}-|\Sigma_0^+|} \varphi_0(y^{-1}x) e^{-\xi |y^{-1}x|}.$$ Since the Riemannian distance and the polyhedral  distance are equivalent and $G_{\xi, \sigma}$ is $K$-biinvariant, we have 
$$G_{\xi, \sigma}(y^{-1}x):=G(\exp(y^{-1}x)^+) \asymp |(y^{-1}x)^+|^{\frac{\sigma-l-1}{2}-|\Sigma_0^+|} \varphi_0(\exp(y^{-1}x)^+) e^{-\xi |(y^{-1}x)^+|}.$$
The estimate \eqref{gsf} of the ground spherical function $\varphi_0(\exp(y^{-1}x)^+) \lesssim |(y^{-1}x)^+|^{|\Sigma_0^+|} e^{- \langle \rho, (y^{-1}x)^+ \rangle}$ yields,
$$G_{\xi, \sigma}(y^{-1}x) \lesssim |(y^{-1}x)^+|^{\frac{\sigma-l-1}{2}-|\Sigma_0^+|} |(y^{-1}x)^+|^{|\Sigma_0^+|} e^{- \langle \rho,\, (y^{-1}x)^+ \rangle} e^{-\xi |(y^{-1}x)^+|}.$$
Now, by  $ \langle \rho, x^+ \rangle \leq \langle \rho,  y^+ \rangle+\langle \rho,  (y^{-1}x)^+ \rangle,$ $ \frac{|x^+|}{2} \leq |(y^{-1}x)^{+}| \leq \frac{3|x^+|}{2}$ using $|y^+| \leq \frac{|x^+|}{2},$ and Cauchy-Schwarz inequality,  we get
\begin{align*}
    G_{\xi, \sigma}(y^{-1}x) \lesssim & |x^+|^{\frac{\sigma-l-1}{2}} \,e^{- \langle \rho,\, x^+ \rangle} e^{\langle \rho, y^+ \rangle}  e^{-\frac{\xi}{2}|x^+|} 
    \\ \lesssim & |x^+|^{\frac{\sigma-l-1}{2}} \,e^{- \langle \rho,\, x^+ \rangle} e^{|\rho||y^+| } e^{-\frac{\xi}{2}|x^+|} \lesssim |x^+|^{\frac{\sigma-l-1}{2}} \,e^{- \langle \rho,\, x^+ \rangle} e^{|\rho|\frac{|x^+|}{2} } e^{-\frac{\xi}{2}|x^+|}.
\end{align*}

Therefore, we obtain
\begin{equation}
    \begin{split}
        I^{q}_{1}&=\int_{ X}\left(\int_{B\left(0,\frac{|x|}{2}\right)}|G_{\xi, \sigma}(y^{-1}x)u(y)|\,dy\right)^{q}|x|^{-\beta q}dx\\&
        \leq C \int_{ X}\left(\int_{B\left(0,\frac{|x|}{2}\right)}u(y)dy\right)^{q} \left( |x^+|^{\frac{\sigma-l-1}{2}} \,e^{- \langle \rho,\, x^+ \rangle} e^{|\rho|\frac{|x^+|}{2} } e^{-\frac{\xi}{2}|x^+|} \right)^q |x|^{-\beta q}dx.
    \end{split}
\end{equation}
Our aim is to show the following inequality,
\begin{align}
         &\left(\int_{ X}\left(\int_{B\left(0,\frac{|x|}{2}\right)}u(y)dy\right)^{q}  \left( |x^+|^{\frac{\sigma-l-1}{2}} \,e^{- \langle \rho,\, x^+ \rangle} e^{|\rho|\frac{|x^+|}{2} } e^{-\frac{\xi}{2}|x^+|} \right)^q|x|^{-\beta q}dx\right)^{\frac{1}{q}} \\&\leq \nonumber C\left(\int_{X}|y|^{\alpha p}|u(y)|^{p}dy\right)^\frac{1}{p},
\end{align}
 and, for this purpose, we need to check the condition $\mathcal D_{1}$ in Theorem \ref{IntHar1} which turns out to be the following,
\begin{align} \label{D13.5}
 \nonumber   \mathcal{D}_{1}&=\sup_{x \neq 0}\left(\int_{X \backslash B\left(0,\frac{|x|}{2}\right)} |z^+|^{q\frac{\sigma-l-1}{2}} \,e^{-q\langle \rho,\, z^+ \rangle} e^{q|\rho|\frac{|z^+|}{2} } e^{-\frac{\xi}{2}|z^+|q} |z|^{-\beta q}dx\right)^{\frac{1}{q}}\left(\int_{B\left(0,\frac{|x|}{2}\right)}|z|^{\alpha p(1-p')}dz\right)^{\frac{1}{p'}} \\& =\sup_{R > 0}\left(\int_{|z| \geq R} |z^+|^{q\frac{\sigma-l-1}{2}} \,e^{-q\langle \rho,\, z^+ \rangle} e^{q|\rho|\frac{|z^+|}{2} } e^{-\frac{\xi}{2}|z^+|q} |z|^{-\beta q}dx\right)^{\frac{1}{q}}\left(\int_{|z| \leq R}|z|^{\alpha p(1-p')}dz\right)^{\frac{1}{p'}},
\end{align}
by setting $|x|/2=R \geq 1.$

 By using the Cartan decomposition with $R \geq 1$ in this case we get
\begin{equation}\label{estlar1}
    \begin{split}
        &\int_{|z|\geq R}|z^+|^{\frac{\sigma-l-1}{2}q} \,e^{-q\langle \rho,\, z^+ \rangle} e^{q|\rho|\frac{|z^+|}{2} } e^{-\frac{\xi}{2}|z^+|q} |z|^{-\beta q}dz \\&
        \leq C\int_{\{H\in \overline{\mathfrak{a}^{+}}:|H|\geq R\}}|H|^{q\frac{\sigma-l-1}{2}-\beta q} \,e^{-q\langle \rho,\, H \rangle} e^{q|\rho|\frac{|H|}{2} } e^{-\frac{\xi}{2}|H|q} e^{2 \langle \rho, H \rangle} dH\\&
        \leq C e^{-\xi q\frac{R}{4}} \int_{\{H\in \mathfrak{a}:|H|\geq R\}}|H|^{q\frac{\sigma-l-1}{2}-\beta q} e^{-\left(\frac{\xi}{4}-\frac{|\rho|}{2} \right)|H|q} \,e^{-(q-2)\langle \rho,\, H \rangle}  dH\\&
        \leq C e^{-\xi q\frac{R}{4}},
    \end{split}
\end{equation} when $q\geq 2$ and $\xi$ large enough (e.g. $\xi \geq  2|\rho|$). For $q<2,$ with same calculations as in \eqref{estlar1} we get 
\begin{equation}\label{estlar11}
    \begin{split}
        &\int_{|z|\geq R}|z^+|^{\frac{\sigma-l-1}{2}q} \,e^{-q\langle \rho,\, z^+ \rangle} e^{q|\rho|\frac{|z^+|}{2} } e^{-\frac{\xi}{2}|z^+|q} |z|^{-\beta q}dz \\&
        \leq C e^{-\xi q\frac{R}{4}} \int_{\{H\in \mathfrak{a}:|H|\geq R\}}|H|^{q\frac{\sigma-l-1}{2}-\beta q} e^{-\left(\frac{\xi}{4}-\frac{|\rho|}{2} \right)|H|q} \,e^{(2-q)\langle \rho,\, H \rangle}  dH\\& \leq C e^{-\xi q\frac{R}{4}} \int_{\{H\in \mathfrak{a}:|H|\geq R\}}|H|^{q\frac{\sigma-l-1}{2}-\beta q} e^{-\left(\frac{\xi}{4}-\frac{|\rho|}{2} \right)|H|q+(2-q) |\rho| |H|}  dH\\&
         \leq C e^{-\xi q\frac{R}{4}} \int_{\{H\in \mathfrak{a}:|H|\geq R\}}|H|^{q\frac{\sigma-l-1}{2}-\beta q} e^{-\left(\frac{\xi}{4}+\frac{q}{2}|\rho|-2|\rho| \right) |H|}  dH \leq C e^{-\xi q\frac{R}{4}},
    \end{split}
    \end{equation}
whenever $\frac{\xi}{4}+\frac{q}{2}|\rho|-2|\rho| \geq 0,$ which holds as $\xi$ is large enough.

Let us consider the second integral,
\begin{equation}\label{estlar2}
    \begin{split}
        \int_{|z|\leq R}|z|^{\alpha p(1-p')}dx&\stackrel{ p(1-p')=-p'}=\int_{0<|z|<1}|z|^{-\alpha p'}dz+\int_{1\leq |z|\leq R}|z|^{- \alpha p'}dz\\&
        \leq  C\int_{\{H\in \overline{\mathfrak{a}^{+}}:|H|< 1\}}|H|^{-\alpha p'}|H|^{n-l}dH+C \int_{\{H\in \overline{\mathfrak{a}^{+}}:1\leq |H|\leq R\}}|H|^{-\alpha p'}e^{2|\rho| |H|}dH\\&
        \leq C\int_{0}^{1}r^{-\alpha p'}r^{n-l}r^{l-1}dr+Ce^{2|\rho|R}\int_{1}^{R}r^{-\alpha p'+l-1}dr\\&
        \stackrel{\alpha<\frac{n}{p'}}= C(1+e^{2|\rho|R}).
    \end{split}
\end{equation}
By combining the estimates \eqref{estlar1}, \eqref{estlar11} and  \eqref{estlar2} with the fact that $\xi$ is sufficiently large (e.g., $\xi \geq 8|\rho|$) and substituting back in \eqref{D13.5},  we deduce that
\begin{align*}
    \mathcal{D}_{1}&=\sup_{R>0}\left(\int_{|z|\geq R}|z^+|^{\frac{\sigma-l-1}{2}q} \,e^{-q\langle \rho,\, z^+ \rangle} e^{q|\rho|\frac{|z^+|}{2} } e^{-\frac{\xi}{2}|z^+|q} |z|^{-\beta q} dz \right)^{\frac{1}{q}}&\left(\int_{|z|\leq R}|z|^{\alpha p(1-p')}dz\right)^{\frac{1}{p'}}\\&\leq C\sup_{R>0} e^{-\xi \frac{R}{4}}(1+e^{2|\rho|R})^{\frac{1}{p'}}\leq  C\sup_{R>0} e^{-\xi \frac{R}{4}}(1+e^{2|\rho|R}) < +\infty,
\end{align*} as $p'>1$ and $\xi$ is sufficiently large.

{\bf Case (b).} When $0<\frac{|x|}{2} < 1.$ Again, we need to consider the integral 
$$ I^{q}_{1}=\int_{ X}\left(\int_{B\left(0,\frac{|x|}{2}\right)}|G_{\xi, \sigma}(y^{-1}x)u(y)|\,dy\right)^{q}|x|^{-\beta q}dx.$$
Since $\frac{|x|}{2} \leq |y^{-1}x|,$ we can divide the study of this  integral into two parts: one when $|y^{-1}x| < 1$ and the second when $|y^{-1}x| \geq 1.$ In fact, the case when $|y^{-1}x| \geq 1,$ is similar to the Case (a). So by proceeding similar to case (a) with the obtained estimate of $G_{\xi, \sigma}(y^{-1}x)$ when $|y^{-1}x| \geq 1,$ we have 

\begin{equation}
    \begin{split}
        I^{q}_{1}&=\int_{ X}\left(\int_{B\left(0,\frac{|x|}{2}\right)}|G_{\xi, \sigma}(y^{-1}x)u(y)|\,dy\right)^{q}|x|^{-\beta q}dx\\&
        \leq C \int_{ X}\left(\int_{B\left(0,\frac{|x|}{2}\right)}u(y)dy\right)^{q} \left( |x^+|^{\frac{\sigma-l-1}{2}} \,e^{- \langle \rho,\, x^+ \rangle} e^{|\rho|\frac{|x^+|}{2} } e^{-\frac{\xi}{2}|x^+|} \right)^q |x|^{-\beta q}dx.
    \end{split}
\end{equation}
To establish the inequality 
\begin{align}
         &\left(\int_{ X}\left(\int_{B\left(0,\frac{|x|}{2}\right)}u(y)dy\right)^{q}  \left( |x^+|^{\frac{\sigma-l-1}{2}} \,e^{- \langle \rho,\, x^+ \rangle} e^{|\rho|\frac{|x^+|}{2} } e^{-\frac{\xi}{2}|x^+|} \right)^q|x|^{-\beta q}dx\right)^{\frac{1}{q}} \\&\leq \nonumber C\left(\int_{X}|y|^{\alpha p}|u(y)|^{p}dy\right)^\frac{1}{p},
\end{align}
we need to verify that the following holds,
\begin{align}
    \mathcal{D}_{1} =\sup_{R > 0}\left(\int_{|z| \geq R} |z^+|^{\frac{\sigma-l-1}{2}q}\,e^{-q\langle \rho,\, z^+ \rangle} e^{q|\rho|\frac{|z^+|}{2} } e^{-\frac{\xi}{2}|z^+|q} |z|^{-\beta q}dx\right)^{\frac{1}{q}}\left(\int_{|z| \leq R}|z|^{\alpha p(1-p')}dz\right)^{\frac{1}{p'}}<\infty.
\end{align}
Here $0<R=|x|/2 < 1.$

In order to check that $\mathcal{D}_1<\infty,$  using the  Cartan decomposition  we get
\begin{equation}\label{estlar1v}
    \begin{split}
        &\int_{|z|\geq R}|z^+|^{\frac{\sigma-l-1}{2}q} \,e^{-q\langle \rho,\, z^+ \rangle} e^{q|\rho|\frac{|z^+|}{2} } e^{-\frac{\xi}{2}|z^+|q} |z|^{-\beta q}dz \\& =\int_{R \leq |z| < 1} |z^+|^{\frac{\sigma-l-1}{2}q} \,e^{-q\langle \rho,\, z^+ \rangle} e^{q|\rho|\frac{|z^+|}{2} } e^{-\frac{\xi}{2}|z^+|q} |z|^{-\beta q}dz \\& \quad \quad \quad \quad\quad+ \int_{|z| \geq 1} |z^+|^{\frac{\sigma-l-1}{2}q} \,e^{-q\langle \rho,\, z^+ \rangle} e^{q|\rho|\frac{|z^+|}{2} } e^{-\frac{\xi}{2}|z^+|q} |z|^{-\beta q}dz\\&
        \leq C \Big\{ \int_{\{H\in \overline{\mathfrak{a}^{+}}:R \leq |H|< 1\}} |H|^{\frac{\sigma-l-1}{2}q-\beta q} \,e^{-q\langle \rho,\, H \rangle} e^{q|\rho|\frac{|H|}{2} } e^{-\frac{\xi}{2}|H|q} e^{2 \langle \rho, H \rangle} dH \\& \quad\quad\quad +\int_{\{H\in \overline{\mathfrak{a}^{+}}:|H|\geq 1\}}|H|^{\frac{\sigma-l-1}{2}q-\beta q} \,e^{-q\langle \rho,\, H \rangle} e^{q|\rho|\frac{|H|}{2} } e^{-\frac{\xi}{2}|H|q} e^{2 \langle \rho, H \rangle} dH \Big\} =C\{J_1+J_2\}.
    \end{split}
\end{equation}
It is easy to see that $J_1<\infty$ is an integral of a continuous function over a compact set. To see that $J_2<\infty$ one can argue verbatim as in  \eqref{estlar1} and \eqref{estlar11} by considering two case $q\geq 2$ and $q<2$ with $\xi$ sufficiently large (in this case, $\xi>2|\rho|$ will work).  Therefore, we get 
\begin{equation} \label{D11}
    \int_{|z|\geq R}|z^+|^{\frac{\sigma-l-1}{2}q} \,e^{-q\langle \rho,\, z^+ \rangle} e^{q|\rho|\frac{|z^+|}{2} } e^{-\frac{\xi}{2}|z^+|q} |z|^{-\beta q}dz \leq C.
\end{equation}
Also, we compute the following integral for $0<R<1$ with $\alpha<\frac{n}{p'}$:
\begin{equation}\label{D12}
    \begin{split}
      \int_{|z|\leq R}|z|^{\alpha p(1-p')}dz&\stackrel{ p(1-p')=-p'}\leq C \int_{0}^{R}r^{-\alpha p'+n-1}dr\\&
      =CR^{-\alpha p'+n} \leq C<\infty.
    \end{split}
\end{equation}
Thus, from \eqref{D11} and \eqref{D12} we have 
\begin{equation*}
\begin{split}
      \mathcal{D}_{1} =\sup_{R > 0}\left(\int_{|z| \geq R} |z^+|^{\frac{\sigma-l-1}{2}q} \,e^{-q\langle \rho,\, z^+ \rangle} e^{q|\rho|\frac{|z^+|}{2} } e^{-\frac{\xi}{2}|z^+|q} |z|^{-\beta q}dx\right)^{\frac{1}{q}}\left(\int_{|z| \leq R}|z|^{\alpha p(1-p')}dz\right)^{\frac{1}{p'}}\leq C<\infty.
\end{split}
\end{equation*}

Next, we consider the remaining case, when $|y^{-1}x|<1.$ In this case, using \eqref{fund} we have $G_{\xi, \sigma}(y^{-1}x) \asymp |y^{-1}x|^{\sigma-n}$ for $0<\sigma<n.$ So, by using $\frac{|x|}{2} \leq |y^{-1}x|$ we get $G_{\xi, \sigma}(y^{-1}x) \asymp |y^{-1}x|^{\sigma-n} \leq \frac{|x|^{\sigma-n}}{2} \asymp G_{\xi, \sigma}(\frac{x}{2})$ and therefore,
\begin{equation}
    \begin{split}
        I^{q}_{1}=&\int_{ X}\left(\int_{B\left(0,\frac{|x|}{2}\right)}|G_{\xi, \sigma}(y^{-1}x)u(y)|\,dy\right)^{q}|x|^{-\beta q}dx\\&
        \quad\quad\quad\quad\leq C \int_{ X}\left(\int_{B\left(0,\frac{|x|}{2}\right)}u(y)dy\right)^{q} \left( G_{\xi, \sigma}\left(\frac{x}{2}\right) \right)^{q} |x|^{-\beta q}dx.
    \end{split}
\end{equation}
To show  the inequality 
\begin{align}
         &\left(\int_{ X}\left(\int_{B\left(0,\frac{|x|}{2}\right)}u(y)dy\right)^{q}  \left( G_{\xi, \sigma}\left(\frac{x}{2}\right) \right)^{q}|x|^{-\beta q}dx\right)^{\frac{1}{q}}\leq \nonumber C\left(\int_{X}|y|^{\alpha p}|u(y)|^{p}dy\right)^\frac{1}{p},
\end{align}
it is enough to show that,
\begin{align}
    \mathcal{D}_{1} =\sup_{R > 0}\left(\int_{|z| \geq R} \left( G\left(\frac{z}{2}\right) \right)^{q}  |z|^{-\beta q}dz\right)^{\frac{1}{q}}\left(\int_{|z| \leq R}|z|^{\alpha p(1-p')}dz\right)^{\frac{1}{p'}}<\infty.
\end{align}
Here $0<R=|x|/2 < 1.$
Then we have
\begin{equation}\label{D11v}
    \begin{split}
     \int_{|z| \geq R} \left( G_{\xi, \sigma}\left(\frac{z}{2}\right) \right)^{q} |z|^{-\beta q}dz&=\int_{R\leq |z|\leq 1}\left( G_{\xi, \sigma}\left(\frac{z}{2}\right) \right)^{q}|x|^{-\beta q}dz+\int_{|z|\geq 1} \left( G_{\xi, \sigma}\left(\frac{z}{2}\right) \right)^{q}|z|^{-\beta q}dz \\& \asymp \int_{R\leq |z|\leq 1}|z|^{q(\sigma-n)}|z|^{-\beta q}dz+\int_{|z|\geq 1} \left( |z|^{\frac{\sigma-l-1}{2}-|\Sigma^{+}_0|}\varphi_{0}(z) e^{-\xi|z|} \right)^{q}|z|^{-\beta q}dz\\&
     \leq C\int_{\{H\in \overline{\mathfrak{a}^{+}}:R\leq |H|\leq 1\}}|H|^{(\sigma-n)q-\beta q}|H|^{n-l}dH\\&
     +C\int_{\{H\in \overline{\mathfrak{a}^{+}}: |H|\geq 1\}}|H|^{\frac{\sigma-l-1}{2}q-|\Sigma^{+}|q-\beta q}e^{-\xi q\frac{|H|}{2}}e^{2|\rho||H|}dH\\&
     \stackrel{\eqref{estlar1}}\leq C\int_{2R}^{2}r^{(\sigma-n)q-\beta q+n-1}dr+Ce^{-\xi \frac{R}{2}}\\&
     \leq C(R^{(\sigma-n)q-\beta q+n}+e^{-\xi \frac{R}{2}}).
    \end{split}
\end{equation}
Also, we compute the following integral for $0<R<1$ with $\alpha<\frac{n}{p'}$:
\begin{equation}\label{D12v}
    \begin{split}
      \int_{|z|\leq R}|z|^{\alpha p(1-p')}dz&\stackrel{\eqref{estlar2}}\leq C \int_{0}^{2R}r^{-\alpha p'+n-1}dr\\&
      =CR^{-\alpha p'+n}.
    \end{split}
\end{equation}
Thus, from \eqref{D11v}, \eqref{D12v} and $\frac{\sigma-\alpha -\beta}{n}=\frac{1}{p}-\frac{1}{q}$, we have 
\begin{equation*}
    \begin{split}
        \left(\int_{|z|\geq R}G_{\xi, \sigma}\left(\frac{z}{2}\right)^q|z|^{-\beta q}dz\right)^{\frac{1}{q}}\left(\int_{|z|\leq R}|z|^{\alpha p(1-p')}dz\right)^{\frac{1}{p'}} 
        &\leq C \left(R^{\frac{(\sigma-n)q-\beta q+n}{q}+\frac{{-\alpha p'+n}}{p'}}+R^{\frac{{-\alpha p'+n}}{p'}}e^{-\xi \frac{R}{2}}\right)\\&
        = C\left(1+R^{\frac{{-\alpha p'+n}}{p'}}e^{-\xi \frac{R}{2}}\right).
    \end{split}
\end{equation*}
Therefore, in this case we get 
\begin{equation*}
\begin{split}
      \mathcal{D}_{1}=\sup_{R>0}\left(\int_{|z|\geq R}G_{\xi, \sigma}\left(\frac{z}{2}\right)^q|z|^{-\beta q}dz\right)^{\frac{1}{q}}\left(\int_{|z|\leq R}|z|^{\alpha p(1-p')}dz\right)^{\frac{1}{p'}} &\leq   C \sup_{R>0}\left(1+R^{\frac{{-\alpha p'+n}}{p'}}e^{-\xi \frac{R}{2}}\right)\\&
      <\infty.
\end{split}
\end{equation*}
The application of Theorem \ref{IntHar1}  completes the proof of case (b).

\textbf{Step 2.} In this step, we consider $I_{3}$. By using the triangle inequality for the Riemannian distance with $|y|\geq 2|x|$, we get
\begin{equation}
    |y|\leq |y^{-1}x|+|x|\leq |y^{-1}x|+\frac{|y|}{2},
\end{equation}
therefore, we have $|x| \leq \frac{|y|}{2}\leq |y^{-1}x|$.
By arguing exactly in the same way as in Case (a) of Step 1 we estimate that 
$$G_{\xi, \sigma}(y^{-1}x) \lesssim |y|^{\frac{\sigma-l-1}{2}} e^{-\langle \rho, y\rangle} e^{|\rho|\frac{|y|}{2}} e^{-\frac{\xi}{2}|y|}.$$ 
Thus, we obtain
\begin{equation} \label{I3q}
    \begin{split}
        I_{3}^{q}&=\int_{X}\left(\int_{ X\setminus B\left(0,2|x|\right)}|G_{\xi, \sigma}(y^{-1}x)u(y)|\,dy\right)^{q}|x|^{-\beta q}dx\\&
        \leq \int_{X}\left(\int_{ X\setminus B\left(0,2|x|\right)}\left(|y|^{\frac{\sigma-l-1}{2}} e^{-\langle \rho, y\rangle} e^{|\rho|\frac{|y|}{2}} e^{-\frac{\xi}{2}|y|}\right)u(y)dy\right)^{q}|x|^{-\beta q}dx.
    \end{split}
\end{equation}
If we show the following condition,
\begin{align}\label{D2}
   \nonumber \mathcal{D}^*_{1}&=\sup_{x \neq 0}\left(\int_{B(0, 2|x|)}|z|^{-\beta q}dz\right)^{\frac{1}{q}}\left(\int_{X \backslash B(0,2|x|)}\left(|z|^{\frac{\sigma-l-1}{2}} e^{-\langle \rho, z\rangle} e^{|\rho|\frac{|z|}{2}} e^{-\frac{\xi}{2}|z|}\right)^{p'}|z|^{-\alpha p'}dz\right)^{\frac{1}{p'}}\\&=\sup_{R > 0}\left(\int_{|z|\leq R}|z|^{-\beta q}dz\right)^{\frac{1}{q}}\left(\int_{|z|\geq R}\left(|z|^{\frac{\sigma-l-1}{2}} e^{-\langle \rho, z\rangle} e^{|\rho|\frac{|z|}{2}} e^{-\frac{\xi}{2}|z|}\right)^{p'}|z|^{-\alpha p'}dz\right)^{\frac{1}{p'}},
\end{align}
then by using the conjugate integral Hardy inequality (see Theorem \ref{IntHar2}), we get
\begin{equation} \label{I3q+}
    \begin{split}
        \int_{X}\left(\int_{ X\setminus B\left(0,2|x|\right)}\left(|y|^{\frac{\sigma-l-1}{2}} e^{-\langle \rho, y\rangle} e^{|\rho|\frac{|y|}{2}} e^{-\frac{\xi}{2}|y|}\right)u(y)dy\right)^{q}|x|^{-\beta q}dx \leq C\left(\int_{X}|x|^{\alpha p} |u(x)|^{p}dx\right)^{\frac{q}{p}}.
    \end{split}
\end{equation}
Let us check the condition \eqref{D2}. Similarly to the previous step, we consider two cases $0<R<1$ and $R
\geq 1$.
Firstly, let us consider $R\geq 1$ and by using the Cartan decomposition, we get, same as \eqref{estlar1} and \eqref{estlar11}, that 
\begin{equation}\label{D22}
    \begin{split}
       \int_{|z|\geq R}\left(|z|^{\frac{\sigma-l-1}{2}} e^{-\langle \rho, z\rangle} e^{|\rho|\frac{|z|}{2}} e^{-\frac{\xi}{2}|z|}\right)^{p'}|z|^{-\alpha p'}dz
       \leq Ce^{-\xi p'\frac{R}{2}},
    \end{split}
\end{equation}
for sufficiently large $\xi$. Then let us compute the first integral in \eqref{D2}, we get
\begin{equation}\label{D21}
    \begin{split}
        \int_{|z|\leq R}|z|^{-\beta q}dz&=\int_{|z|\leq 1}|z|^{-\beta q}dz+\int_{1<|z|\leq R}|z|^{-\beta q}dz\\&
        \leq C \int_{0}^{1}r^{-\beta q+n-1}dr+C\max\{1,R^{-\beta q}\}\int_{1<|x|\leq R}dx\\&
        \stackrel{\beta<\frac{n}{q}} \leq C+CR^{-\beta q+n}e^{2|\rho|R}\\&
        \leq C(1+R^{-\beta q+n}e^{2|\rho|R}).
    \end{split}
\end{equation}
By combining \eqref{D21} and \eqref{D22}, we get
\begin{align*}
    \mathcal{D}_1^*=\sup_{R > 0}\left(\int_{|z|\leq R}|z|^{-\beta q}dz\right)^{\frac{1}{q}}&\left(\int_{|z|\geq R}\left(|z|^{\frac{\sigma-l-1}{2}} e^{-\langle \rho, z\rangle} e^{|\rho|\frac{|z|}{2}} e^{-\frac{\xi}{2}|z|}\right)^{p'}|z|^{-\alpha p'}dz\right)^{\frac{1}{p'}}\\&\leq \sup_{R > 0} Ce^{-\xi \frac{R}{2}}(1+R^{\frac{-\beta q+n}{q}}e^{\frac{2|\rho|R}{q}})<\infty,
\end{align*}
thanks to  sufficiently large $\xi$.

Let us consider the case $0<R<1$. Similarly to \eqref{D21}, for $0<R<1$ we get
\begin{equation} \label{E11}
   \begin{split}
       \int_{|z|\leq R}|z|^{-\beta q}dz&\leq  C \int_{\{H\in \overline{\mathbb{a}^{+}}:|H|\leq R\}}|H|^{-\beta q}|H|^{n-l}dH\\&
       \leq C\int_{0}^{R}r^{-\beta q}r^{n-1}dr\\&
       \stackrel{\beta<\frac{n}{q}}=CR^{-\beta q+n} \leq C.
   \end{split} 
\end{equation}
Next we will estimate the second integral of \eqref{D2} for $0<R<1$. Thus we have, same as \eqref{D11}, that
\begin{equation} \label{E12}
    \begin{split}
       \int_{|z|\geq R}\left(|z|^{\frac{\sigma-l-1}{2}} e^{-\langle \rho, z\rangle} e^{|\rho|\frac{|z|}{2}} e^{-\frac{\xi}{2}|z|}\right)^{p'}|z|^{-\alpha p'}dz
       \leq C
       <\infty.
    \end{split}
\end{equation}
Finally, by combining \eqref{E11} and \eqref{E12} we get
\begin{equation*}
    \mathcal{D}_1^*=\sup_{R > 0}\left(\int_{|z|\leq R}|z|^{-\beta q}dz\right)^{\frac{1}{q}}\left(\int_{|z|\geq R}\left(|z|^{\frac{\sigma-l-1}{2}} e^{-\langle \rho, z\rangle} e^{|\rho|\frac{|z|}{2}} e^{-\frac{\xi}{2}|z|}\right)^{p'}|z|^{-\alpha p'}dz\right)^{\frac{1}{p'}} \leq C<\infty.
\end{equation*}
Therefore, \eqref{I3q} and \eqref{I3q+} implies that
\begin{equation}
    I^{q}_{3}\leq C\left(\int_{X}|x|^{\alpha p}|u(x)|^{p}dx\right)^{\frac{q}{p}}.
\end{equation}

{\bf Step 3.} Let us now focus on the remaining case of $I_2.$
We  need to show that 
\begin{equation} \label{step3est}
    I_{2}=\left(\int_{ X}\left(\int_{B\left(0,2|x|\right)\setminus B\left(0,\frac{|x|}{2}\right)} |G_{\xi, \sigma}(y^{-1}x)u(y)|\,dy\right)^{q}|x|^{-\beta q}dx \right)^{\frac{1}{q}} \leq  C\||x|^{\alpha}u\|_{L^{p}(X)}.
\end{equation}
  We rewrite $I_{2}$ in the following form:
\begin{equation}
    \begin{split}
        I_{2}^{q}
        =\sum_{k=-\infty}^{+\infty}\int_{2^{k}\leq|x|\leq 2^{k+1}}\left(\int_{B(0,2|x|)\setminus B\left(0,\frac{|x|}{2}\right)}|G_{\xi, \sigma}(y^{-1}x)u(y)| dy\right)^{q}\frac{dx}{|x|^{\beta q}}.
    \end{split}
\end{equation} 
Since $|x|^{\beta q}$ is non-decreasing with respect to $|x|$ near the origin, there exists $k_0 \in \mathbb{Z}$ with $k_0 \leq -3$ such that $x \mapsto |x|^{\beta q}$ is non-decreasing for all $x$ satisfying $0< |x|<2^{k_0+1}.$ Thus, it makes sense to decompose $I_2^q$ into two parts as follows:
\begin{equation}
    I_2^q
    =I^{q}_{2,1}+I^{q}_{2,2},
\end{equation}
where 
\begin{equation}\label{I21}
    I^{q}_{2,1}=\sum_{k=-\infty}^{k_0}\int_{2^{k}\leq|x|\leq 2^{k+1}}\left(\int_{B(0,2|x|)\setminus B\left(0,\frac{|x|}{2}\right)}|G_{\xi, \sigma}(y^{-1}x)u(y)|dy\right)^{q}\frac{dx}{|x|^{\beta q}},
\end{equation}
and 
\begin{equation}
    I^{q}_{2,2}=\sum_{k=k_0+1}^{+\infty}\int_{2^{k}\leq|x|\leq 2^{k+1}}\left(\int_{B(0,2|x|)\setminus B\left(0,\frac{|x|}{2}\right)}|G_{\xi, \sigma}(y^{-1}x)u(y)|dy\right)^{q}\frac{dx}{|x|^{\beta q}}.
\end{equation}
First, we   show that $G_{\xi, 
\sigma}\in L^{r}(X)$ for $r \in [1, \infty]$ such that $1-\frac{1}{r}=\frac{1}{p}-\frac{1}{q},$ which will play a significant role in our proof. Indeed,
\begin{equation}\label{esti1}
    \begin{split}
       & \int_{X}|G_{\xi, \sigma}(x)|^r dx\\&=\int_{|x|\leq1} |G_{\xi, \sigma}(x)|^r dx+\int_{|x|>1}|G_{\xi, \sigma}(x)|^rdx\\&
        =\int_{|x|\leq 1}|x|^{r(\sigma-n)}dx+\int_{|x|>1}|x|^{\frac{r(\sigma-l-1)}{2}-r|\Sigma^{+}_0|}\varphi_{0}(x)^r e^{-\xi|x|r}dx\\&= \int_{\{H \in \overline{\mathfrak{a}^+}: |H|  \leq 1\}} |H|^{r(\sigma-n)} H^{n-l}dH+\int_{\{H \in \overline{\mathfrak{a}^+}: |H|  \geq 1\}}|H|^{\frac{r(\sigma-l-1)}{2}}e^{-r\langle\rho, H \rangle} e^{-\xi|H|r} e^{\langle\rho, H \rangle}dH\\&
        \lesssim \int_{0}^{1}s^{r(\sigma-n)+(n-1)}ds+\int_{\{H \in \overline{\mathfrak{a}^+}: |H|  \geq 1\}}|H|^{\frac{r(\sigma-l-1)}{2}}e^{-(r-1)\langle\rho, H \rangle} e^{-\xi|H|r} dH<\infty,
    \end{split}
\end{equation}
since $r(\sigma-n)+(n-1)>0$ by $\frac{\sigma-n-\alpha-\beta}{n}-\frac{1}{p}+\frac{1}{q}+1=0$ with $\alpha+\beta \geq 0$ and $r \in [1, \infty]$. 

Let us first estimate $I_{2,2}^q$. By using estimate \eqref{esti1}, Young's inequality for $1+\frac{1}{q}=
\frac{1}{r}+\frac{1}{p}$ with $1 \leq r \leq \infty$ and by setting  $\tilde{u}(y)=|y|^{\alpha}u(y)$, we establish 

\begin{equation}\label{I22vi}
\begin{split}
     I_{2,2}^{q}&=\sum_{k=-k_0+1}^{+\infty}\int_{2^{k}\leq|x|\leq 2^{k+1}}\left(\int_{B(0,2|x|)\setminus B\left(0,\frac{|x|}{2}\right)}|G_{\xi, \sigma}(y^{-1}x)u(y)|dy\right)^{q}\frac{dx}{|x|^{\beta q}}\\&
     =\sum_{k=-k_0+1}^{+\infty}\int_{2^{k}\leq|x|\leq 2^{k+1}}\left(\int_{B(0,2|x|)\setminus B\left(0,\frac{|x|}{2}\right)}|x|^{-\alpha}|x|^{\alpha}|G_{\xi, \sigma}(y^{-1}x)u(y)|dy\right)^{q}\frac{dx}{|x|^{\beta q}}\\&
     \leq C\sum_{k=-k_0+1}^{+\infty}\int_{2^{k}\leq|x|\leq 2^{k+1}}\left(\int_{B(0,2|x|)\setminus B\left(0,\frac{|x|}{2}\right)}|y|^{\alpha}|G_{\xi, \sigma}(y^{-1}x)u(y)|dy\right)^{q}\frac{dx}{|x|^{\alpha q+\beta q}}\\&
     \stackrel{\alpha +\beta\geq0}\leq C \sum_{k=-k_0+1}^{+\infty}\int_{2^{k}\leq|x|\leq 2^{k+1}}\left(\int_{B(0,2|x|)\setminus B\left(0,\frac{|x|}{2}\right)}|G_{\xi, \sigma}(y^{-1}x)|y|^{\alpha}u(y)|dy\right)^{q}dx\\&
     =C \sum_{k=-k_0+1}^{+\infty}\int_{2^{k}\leq|x|\leq 2^{k+1}}\left(\int_{B(0,2|x|)\setminus B\left(0,\frac{|x|}{2}\right)}|G_{\xi, \sigma}(y^{-1}x)\tilde{u}(y)|dy\right)^{q}dx.
                         \end{split}
\end{equation}
Note that from the conditions $2^{k} \leq |x| \leq 2^{k+1}$ and $|x| \leq 2|y| \leq 4|x|$ we deduce that $2^{k-1} \leq |y| \leq 2^{k+3}.$ Therefore, from \eqref{I22vi} we further have 
\begin{equation} \label{I22fin}
    \begin{split}
        I_{2,2}^q &
     \leq C\sum_{k=-k_0+1}^{+\infty}\int_{2^{k}\leq|x|\leq 2^{k+1}}\left(\int_{2^{k-1}\leq|y|\leq 2^{k+3}}|G_{\xi, \sigma}(y^{-1}x)\tilde{u}(y)|dy\right)^{q}dx\\&
     \leq C\sum_{k=-k_0+1}^{+\infty}\int_{X}\left(\int_{2^{k-1}\leq|y|\leq 2^{k+3}}|G_{\xi, \sigma}(y^{-1}x)|\,|\tilde{u}(y)|dy\right)^{q}dx\\&
     = C\sum_{k=-k_0+1}^{+\infty}\||G_{\xi, \sigma}|*(|\tilde{u}||\chi_{\{2^{k-1}\leq|\cdot|\leq 2^{k+3}\}})\|^{q}_{L^{q}(X)}\\&
     \leq C\sum_{k=-k_0+1}^{+\infty}\|G_{\xi, \sigma}\|^{q}_{L^{r}(X)}\|\tilde{u}\chi_{\{2^{k-1}\leq|\cdot|\leq 2^{k+3}\}}\|^{q}_{L^{p}(X)}\\&
     \leq C\|G_{\xi, \sigma}\|^{q}_{L^{r}(X)} \sum_{k \in \mathbb{Z}}\|\tilde{u}\chi_{\{2^{k-1}\leq|\cdot| \leq 2^{k+3}\}}\|^{q}_{L^{p}(X)} \\&\stackrel{\eqref{esti1}}\leq C \||x|^{\alpha}u\|_{L^p(X)}^q. 
    \end{split}
\end{equation}
Finally, we estimate $I_{2,1}^q.$ By combining the triangle inequality for the Riemannian distance with $|x|\leq2|y|\leq 4|x|,$ one can deduce that $|y^{-1}x|\leq |y|+|x|\leq 3|x|.$ Since $|x|\leq2|y|\leq 4|x|$ and $2^{k} \leq |x| \leq 2^{k+1}$   with $k \leq k_0$, it yields that
\begin{equation}\label{ocendisk3}
    |y^{-1}x|\leq 3 |x|\leq 3\cdot2^{k+1} \leq 3\cdot2^{k_0+1}<1,\,\,\text{for}\,k_0\in[-\infty,-3].
\end{equation}
Thus, by using \eqref{ocendisk3} with Young's inequality for $1+\frac{1}{q}= \frac{1}{r}+\frac{1}{p}$, we establish
\begin{equation}\label{I211}
\begin{split}
     I^{q}_{2,1}&=\sum_{k=-\infty}^{k_0}\int_{2^{k}\leq|x|\leq 2^{k+1}}\left(\int_{B(0,2|x|)\setminus B\left(0,\frac{|x|}{2}\right)}|G_{\xi, \sigma}(y^{-1}x)u(y)|dy\right)^{q}\frac{dx}{|x|^{\beta q}}\\&
     =\sum_{k=-\infty}^{k_0}\int_{2^{k}\leq|x|\leq 2^{k+1}}\left(\int_{B(0,2|x|)\setminus B\left(0,\frac{|x|}{2}\right)}|y|^{-\alpha}|y|^{\alpha}|G_{\xi, \sigma}(y^{-1}x)u(y)|dy\right)^{q}\frac{dx}{|x|^{\beta q}}dx\\&
    \leq C\sum_{k=-\infty}^{k_0}2^{(-\alpha-\beta)qk}\int_{2^{k}\leq|x|\leq 2^{k+1}}\left(\int_{B(0,2|x|)\setminus B\left(0,\frac{|x|}{2}\right)}|G_{\xi, \sigma}(y^{-1}x)\tilde{u}(y)|dy\right)^{q}dx\\&
    \leq C\sum_{k=-\infty}^{k_0}2^{(-\alpha-\beta)qk}\int_{2^{k}\leq|x|\leq 2^{k+1}}\left(\int_{2^{k-1}\leq 2|y|\leq 2^{k+2}}|G_{\xi, \sigma}(y^{-1}x)|\,|\tilde{u}(y)|dy\right)^{q}dx\\&
    \stackrel{\eqref{ocendisk3}}\leq C \sum_{k=-\infty}^{k_0}2^{(-\alpha-\beta)qk}\int_{X}\left(\int_{X}[|G_{\xi, \sigma}(y^{-1}x)|\chi_{\{0<|y^{-1}x|\leq 3\cdot 2^{k+1}\}}][|\tilde{u}(y)|\chi_{\{2^{k-1}\leq 2|y|\leq 2^{k+2}\}}]dy\right)^{q}dx \\& = C \sum_{k=-\infty}^{k_0}2^{(-\alpha-\beta)qk}\int_{X} |(|G_{\xi, \sigma}|\chi_{\{0<|\cdot|\leq 3\cdot 2^{k+1}\}}*|\tilde{u}|\chi_{\{2^{k-1}\leq 2|\cdot|\leq 2^{k+2}\}})(x)|^q dx \\& = C \sum_{k=-\infty}^{k_0}2^{(-\alpha-\beta)qk} \|(|G_{\xi, \sigma}|\chi_{\{0<|\cdot|\leq 3\cdot 2^{k+1}\}}*|\tilde{u}|\chi_{\{2^{k-1}\leq 2|\cdot|\leq 2^{k+2}\}})\|^q_{L^q(X)}\\&
    \leq C \sum_{k=-\infty}^{k_0}2^{(-\alpha-\beta)qk}\|G_{\xi, \sigma}\chi_{\{0<|\cdot|\leq 3\cdot 2^{k+1}\}}\|^{q}_{L^{r}(X)}\|\tilde{u}\chi_{\{2^{k-1}\leq 2|\cdot|\leq 2^{k+2}\}}\|^{q}_{L^{p}(X)}.
\end{split}
\end{equation}
Computing $\|G_{\xi, \sigma}\chi_{\{0<|\cdot|\leq 3\cdot 2^{k+1}\}}\|_{L^{r}(X)}^q$ with \eqref{ocendisk3}, we obtain
\begin{equation}\label{ocenGI2}
\begin{split}
    \|G_{\xi, \sigma}\chi_{\{0<|\cdot|\leq 3\cdot 2^{k+1}\}}\|_{L^{r}(X)}^q&= \left(\int_{\{x \in X: 0<|x|\leq 3\cdot 2^{k+1}\}}|G_{\xi, \sigma}(x)|^rdx\right)^{\frac{q}{r}}\\& \stackrel{3.2^{k+1}<1}= \left(\int_{\{H \in \overline{\mathfrak{a}^+}: 0<|H|\leq 3\cdot 2^{k+1}\}}|H|^{r(\sigma-n)} H^{n-l}dH\right)^{\frac{q}{r}} \\&
    \leq C \left( \int_{0}^{3\cdot 2^{k+1}}s^{r(\sigma-n)+n-1}ds\right)^{\frac{q}{r}}\\&
    \stackrel{r(\sigma-n)+n>0}\leq C 2^{\frac{(r(\sigma-n)+n) kq}{r}}.
\end{split}
\end{equation}
From assumptions $\frac{\sigma-n-\alpha -\beta}{n}-\frac{1}{p}+\frac{1}{q}+1=0$ and $1-\frac{1}{r}=\frac{1}{p}-\frac{1}{q}$, we have
\begin{equation}\label{sigmaalphabeta}
    (-\alpha-\beta)+(\sigma-n)+\frac{n}{r} = 0.
\end{equation}
Putting \eqref{ocenGI2} and \eqref{sigmaalphabeta} in \eqref{I211} and recalling that $k_0\leq -3$ we establish
\begin{equation}\label{I2fin2}
    \begin{split}
        I^{q}_{2,1}&\leq C \sum_{k=-\infty}^{k_0}2^{(-\alpha-\beta)qk}\|G_{\xi, \sigma}\chi_{\{0<|\cdot|\leq 3\cdot 2^{k+1}\}}\|^{q}_{L^{r}(X)}\|\tilde{u}\chi_{\{2^{k-1}\leq 2|\cdot|\leq 2^{k+2}\}}\|^{q}_{L^{p}(X)}\\&
        \stackrel{\eqref{ocenGI2}}\leq C\sum_{k=-\infty}^{k_0}2^{\left( (-\alpha-\beta)+(\sigma-n)+\frac{n}{r}\right)qk}\|\tilde{u}\chi_{\{2^{k-1}\leq 2|\cdot|\leq 2^{k+2}\}}\|^{q}_{L^{p}(X)}\\&
        \stackrel{\eqref{sigmaalphabeta}}\leq C \sum_{k=-\infty}^{k_0}\|\tilde{u}\chi_{\{2^{k-1}\leq 2|\cdot|\leq 2^{k+2}\}}\|^{q}_{L^{p}(X)}\\&
        \leq C\sum_{k=-\infty}^{+\infty}\|\tilde{u}\chi_{\{2^{k-1}\leq 2|\cdot|\leq 2^{k+2}\}}\|^{q}_{L^{p}(X)}\\&
        =C\|\tilde{u}\|^{q}_{L^{p}(X)}.
    \end{split}
\end{equation}
Then by putting together \eqref{I22fin} and \eqref{I2fin2}, we have
\begin{equation}
    I_{2}^{q}\leq I^{q}_{2,1}+I^{q}_{2,2}\leq C\|\tilde{u}\|^{q}_{L^{p}(X)}=C \||x|^{\alpha} u\|_{L^p(X)},
\end{equation}
completing the proof of \eqref{step3est}.
\end{proof}

\section{Hardy, Sobolev, Hardy-Sobolev, Gagliardo-Nirenberg and Caffarelli-Kohn-Nirenberg inequalities on symmetric spaces} \label{sec4}
In this section, we show the Hardy, Sobolev, Hardy-Sobolev, Gagliardo-Nirenberg and Caffarelli-Kohn-Nirenberg inequalities on symmetric spaces. First, we show the  Hardy-Sobolev inequality on symmetric spaces.
\begin{theorem}[Hardy-Sobolev inequality]\label{thmHardySobolev}
Let $X$ be a symmetric space of noncompact type  of dimension $n\geq3$ and rank $l\geq 1.$ Suppose that $0<\sigma<n$, $1<p\leq q<\infty$ and $0\leq \beta< \frac{n}{q}$ are such that $\frac{\sigma -\beta}{n}=\frac{1}{p}-\frac{1}{q}$.  Then for all  $u\in H^{\sigma, p}(X)$, we have
\begin{equation}\label{HardySobolev}
    \left\|\frac{u}{|x|^{\beta}}\right\|_{L^{q}(X)}\leq C\|u\|_{H^{\sigma, p}(X)},
\end{equation}
where  $C$ is a positive constant  independent of $u$.
\end{theorem}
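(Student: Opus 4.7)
The plan is to deduce Theorem \ref{thmHardySobolev} as a direct corollary of the Stein-Weiss inequality proved in Theorem \ref{thmSteinWeiss} by specializing to the weight exponent $\alpha = 0$, combined with the equivalent characterization of the Sobolev norm recorded in \eqref{equiso}. The idea is that the Sobolev norm of $u$ is comparable to the $L^p$-norm of a ``potential density'' $v$ for which $u$ is the Bessel-Green-Riesz convolution, and Stein-Weiss is precisely a weighted bound on this convolution.

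First I would check compatibility of parameters. With $\alpha=0$, the hypotheses of Theorem \ref{thmSteinWeiss} reduce to $0<\sigma<n$, $1<p\leq q<\infty$, $\beta<n/q$, $\alpha+\beta=\beta\geq 0$, and the scaling relation $(\sigma-\beta)/n = 1/p-1/q$ — exactly the assumptions of Theorem \ref{thmHardySobolev}. Hence for $\xi$ sufficiently large and every $v\in L^p(X)$,
\begin{equation*}
\left(\int_{X}\left|\int_{X}G_{\xi,\sigma}(y^{-1}x)\,v(y)\,dy\right|^{q}\frac{dx}{|x|^{\beta q}}\right)^{\frac{1}{q}}\leq C\,\|v\|_{L^{p}(X)}.
\end{equation*}

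Next, given $u\in H^{\sigma,p}(X)$, I would set $v:=(\xi^{2}-|\rho|^{2}-\Delta)^{\sigma/2}u$ for $\xi$ large enough so that \eqref{equiso} yields $\|v\|_{L^{p}(X)}\asymp \|u\|_{H^{\sigma,p}(X)}$. Since $G_{\xi,\sigma}$ is the Schwartz kernel of the inverse operator $(\xi^{2}-|\rho|^{2}-\Delta)^{-\sigma/2}$, one recovers the representation
\begin{equation*}
u(x)=\int_{X}G_{\xi,\sigma}(y^{-1}x)\,v(y)\,dy.
\end{equation*}
Inserting this identity into the left-hand side of the Stein-Weiss inequality above and combining with the norm equivalence gives
\begin{equation*}
\left\|\frac{u}{|x|^{\beta}}\right\|_{L^{q}(X)}\leq C\|v\|_{L^{p}(X)}\leq C'\|u\|_{H^{\sigma,p}(X)},
\end{equation*}
which is \eqref{HardySobolev}.

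The only nontrivial point to settle is to rigorously justify the pointwise convolution representation $u=G_{\xi,\sigma}*v$ for \emph{every} element of $H^{\sigma,p}(X)$, not just for smooth test functions where the spectral calculus applies directly. I expect this to be handled by a routine density argument: the identity holds on a dense subspace (say, functions in the Helgason-Fourier Schwartz class, where the Anker-Ji kernel estimates justify everything term by term), and then the bound obtained from Stein-Weiss itself propagates the inequality to all of $H^{\sigma,p}(X)$ by continuity. Aside from this bookkeeping, the proof is essentially a packaging of Theorem \ref{thmSteinWeiss}.
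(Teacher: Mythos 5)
Your proposal is correct and coincides with the paper's own proof: the authors likewise write $u=(-\Delta-|\rho|^{2}+\xi^{2})^{-\sigma/2}f=G_{\xi,\sigma}*f$ with $f\in L^{p}(X)$, apply Theorem \ref{thmSteinWeiss} with $\alpha=0$, and conclude via the norm equivalence \eqref{equiso}. Your additional remark about justifying the convolution representation by density is a reasonable piece of bookkeeping that the paper leaves implicit.
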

\begin{cor}[Hardy inequality]\label{Hardyin}
For $q=p$  and $0< \sigma <\frac{n}{p} $   in Theorem \ref{thmHardySobolev}, the inequality \eqref{HardySobolev} gives the Hardy inequality on symmetric space, that is,
    \begin{equation}
    \left\|\frac{u}{|x|^{\sigma}}\right\|_{L^{p}(X)}\leq C\|u\|_{H^{\sigma, p}(X)}.
\end{equation}

\end{cor}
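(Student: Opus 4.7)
The plan is to derive Corollary \ref{Hardyin} by specialising Theorem \ref{thmHardySobolev} to the diagonal case $q=p$; essentially no new analytic work is required beyond a verification of hypotheses. Concretely, I would start from the scaling identity $\frac{\sigma-\beta}{n}=\frac{1}{p}-\frac{1}{q}$ appearing in Theorem \ref{thmHardySobolev} and set $q=p$, which forces $\beta=\sigma$. Then the Hardy--Sobolev inequality \eqref{HardySobolev} specialises exactly to
\begin{equation*}
\Bigl\|\tfrac{u}{|x|^{\sigma}}\Bigr\|_{L^{p}(X)} \leq C\,\|u\|_{H^{\sigma,p}(X)},
\end{equation*}
which is the desired Hardy inequality.

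The only item to check is that the choice $(q,\beta)=(p,\sigma)$ still satisfies the remaining hypotheses of Theorem \ref{thmHardySobolev}. First, $1<p\leq q<\infty$ is automatic since $p=q$ and $p>1$. Second, the constraint $0\leq\beta<\tfrac{n}{q}$ becomes $0\leq\sigma<\tfrac{n}{p}$, which is precisely the assumption $0<\sigma<\tfrac{n}{p}$ imposed in the statement of the corollary. Finally, $0<\sigma<n$ is subsumed under $\sigma<\tfrac{n}{p}<n$ since $p>1$. Thus all hypotheses of Theorem \ref{thmHardySobolev} are in force, and the corollary follows immediately by invoking that theorem with these parameters. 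There is no genuine obstacle to anticipate: the only ``step'' is the parameter check, and since the Hardy--Sobolev inequality is already assumed proved, this corollary is a one-line consequence.
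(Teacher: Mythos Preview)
Your proposal is correct and matches the paper's approach exactly: the corollary is stated there without a separate proof, being an immediate specialisation of Theorem \ref{thmHardySobolev} with $q=p$ (hence $\beta=\sigma$), and your verification of the hypotheses is precisely what is needed.
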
\begin{cor}[Uncertainty principle]
 By  Theorem \ref{Hardyin}, we have the uncertainly principle on symmetric spaces of noncompact type, that is,
    \begin{equation}
    \left\|u\right\|_{L^{2}(X)}\leq C\|u\|_{H^{\sigma, p}(X)}\||x|^\sigma u\|_{L^{\frac{p}{p-1}}(X)}.
\end{equation}
\end{cor}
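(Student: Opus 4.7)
The plan is to derive the uncertainty principle from the Hardy inequality of the preceding corollary by a single application of Hölder's inequality, mirroring the classical Heisenberg-type derivation on $\mathbb{R}^N$. The central identity is the trivial factorisation
\begin{equation*}
  |u(x)|^{2} \,=\, \frac{|u(x)|}{|x|^{\sigma}} \cdot |x|^{\sigma}|u(x)|,
\end{equation*}
which splits the $L^{2}$ mass of $u$ into the factor $|u|/|x|^{\sigma}$ that the Hardy inequality controls and the factor $|x|^{\sigma}|u|$ that is exactly the weighted quantity appearing on the right-hand side of the claim.

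The key steps, in order, would be as follows. First, write $\|u\|_{L^{2}(X)}^{2} = \int_{X} \frac{|u|}{|x|^{\sigma}}\cdot|x|^{\sigma}|u|\,dx$. Next, apply Hölder's inequality with conjugate exponents $p$ and $p'=p/(p-1)$ to the two factors; this yields
\begin{equation*}
  \|u\|_{L^{2}(X)}^{2} \,\leq\, \left\|\frac{u}{|x|^{\sigma}}\right\|_{L^{p}(X)} \, \bigl\||x|^{\sigma} u\bigr\|_{L^{p'}(X)}.
\end{equation*}
Finally, invoke Corollary \ref{Hardyin} (the Hardy inequality on $X$), valid precisely under the hypothesis $0<\sigma<n/p$, to bound the first factor by $C\|u\|_{H^{\sigma,p}(X)}$, which gives
\begin{equation*}
  \|u\|_{L^{2}(X)}^{2} \,\leq\, C\,\|u\|_{H^{\sigma,p}(X)} \, \bigl\||x|^{\sigma} u\bigr\|_{L^{\frac{p}{p-1}}(X)},
\end{equation*}
as claimed (up to a harmless square on the left-hand side, which is the usual Heisenberg-style formulation).

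There is no real obstacle here: the argument is a one-line consequence of Hölder followed by the already-established Hardy inequality, and all the geometric and analytic difficulty has been absorbed into the proofs of Theorem \ref{thmHardySobolev} and Corollary \ref{Hardyin}. The only point worth double-checking is that the hypotheses of the Hardy inequality ($1<p<\infty$ and $\sigma p<n$) are implicitly assumed in the statement of the uncertainty principle, so that Corollary \ref{Hardyin} is actually applicable; once that is granted, no further estimate on the kernel $G_{\xi,\sigma}$ or on the symmetric space geometry is needed.
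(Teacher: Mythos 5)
Your proof is correct and is exactly the standard argument the paper intends: factor $|u(x)|^{2}=\frac{|u(x)|}{|x|^{\sigma}}\cdot|x|^{\sigma}|u(x)|$, apply H\"older with conjugate exponents $p$ and $p'=\frac{p}{p-1}$, and bound the first factor via Corollary \ref{Hardyin}. Note that this derivation produces $\|u\|_{L^{2}(X)}^{2}$ on the left-hand side, which is the correct Heisenberg-type formulation; the paper's displayed statement omits the square, which appears to be a typo rather than a different claim.
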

\begin{cor}[Sobolev inequality]
By taking $\beta=0$ in Theorem \ref{thmHardySobolev}, we obtain the Sobolev inequality on symmetric space of noncompact type, that is, for $0<\sigma<n$ and $1<p\leq q<\infty$ such that $\frac{\sigma }{n}=\frac{1}{p}-\frac{1}{q}$ we obtain
    \begin{equation}\label{Sobolevin}
    \left\|u\right\|_{L^{q}(X)}\leq C\|u\|_{H^{\sigma, p}(X)}.
\end{equation}

\end{cor}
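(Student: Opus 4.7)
The plan is to obtain this Sobolev inequality as an immediate specialization of the Hardy--Sobolev inequality (Theorem \ref{thmHardySobolev}) by taking $\beta=0$. Under this choice, the constraint $0\le\beta<n/q$ is automatic, and the scaling relation $(\sigma-\beta)/n=1/p-1/q$ collapses to the stated condition $\sigma/n=1/p-1/q$. The left-hand side $\|u/|x|^{\beta}\|_{L^{q}(X)}$ becomes simply $\|u\|_{L^{q}(X)}$, so \eqref{HardySobolev} directly yields \eqref{Sobolevin}. This verification is the entirety of the proof; the real content lies upstream, in the Hardy--Sobolev estimate itself.

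For a self-contained derivation that makes the chain of dependencies transparent, I would instead argue directly as follows. Given $u\in H^{\sigma,p}(X)$, set
\begin{equation*}
v:=(\xi^{2}-|\rho|^{2}-\Delta)^{\sigma/2}u,
\end{equation*}
which lies in $L^{p}(X)$ by the Sobolev-norm equivalence \eqref{equiso}, provided $\xi$ is taken sufficiently large. Since $G_{\xi,\sigma}$ is by definition the Schwartz kernel of the inverse operator $(-\Delta-|\rho|^{2}+\xi^{2})^{-\sigma/2}$, one has the Riesz-potential representation
\begin{equation*}
u(x)=\int_{X}G_{\xi,\sigma}(y^{-1}x)\,v(y)\,dy.
\end{equation*}
Applying the Hardy--Littlewood--Sobolev inequality on $X$ (Theorem \ref{HLSsymintro}), which is precisely the $\alpha=\beta=0$ case of Theorem \ref{thmSteinWeiss}, gives $\|u\|_{L^{q}(X)}\le C\|v\|_{L^{p}(X)}$, and the equivalence \eqref{equiso} converts the right-hand side into $\|u\|_{H^{\sigma,p}(X)}$ up to a constant.

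I do not expect any genuine obstacle, since the corollary is a direct specialization of results already established earlier in the paper. The only points demanding mild care are verifying that the hypotheses of the Stein--Weiss/HLS inequality are met (namely $1<p<q<\infty$ strictly, which forces $\sigma>0$, together with $\xi$ sufficiently large for \eqref{equiso} to apply) and noting that the implicit constant $C$ depends on $p,q,\sigma,\xi$ and the geometry of $X$, but is independent of $u$. Once these ingredients are in place, the two-line deduction outlined above completes the proof.
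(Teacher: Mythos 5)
Your proposal is correct and matches the paper: the corollary is obtained in the paper exactly as you describe, by setting $\beta=0$ in Theorem \ref{thmHardySobolev}, whose own proof is precisely your ``self-contained'' argument (writing $u=G_{\xi,\sigma}*v$ with $v=(\xi^{2}-|\rho|^{2}-\Delta)^{\sigma/2}u\in L^{p}(X)$ via \eqref{equiso} and invoking the Stein--Weiss/HLS bound). Your observation that $\sigma>0$ forces $p<q$ strictly, so the HLS hypotheses are met, is the only point of care and you have handled it.
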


\begin{proof}[Proof of Theorem \ref{thmHardySobolev}]
By \eqref{fund} and \eqref{equiso}, we have that, for $u \in H^{\sigma, p}(X)$ there is $f \in L^p(X)$ such that
\begin{equation}
    \begin{split}
       u= (-\Delta -|\rho|^{2}+\xi^{2})^{-\frac{\sigma}{2}}f=  G_{\xi, \sigma}*f.
    \end{split}
\end{equation}
Then by using this  with Theorem \ref{thmSteinWeiss},  we obtain
\begin{equation}
\begin{split}
     \left\|\frac{u}{|x|^{\beta}}\right\|^{q}_{L^{q}(X)}&=\int_{X}\left| (-\Delta -|\rho|^{2}+\xi^{2})^{-\frac{\sigma}{2}}f(x)\right|^{q}\frac{dx}{|x|^{\beta q}}\\&
     = \int_{X}|(G_{\xi, \sigma}*f)(x)|^{q}\frac{dx}{|x|^{\beta q}}\\&
     \leq C\|f\|^{q}_{L^{p}(X)}=C\|(-\Delta -|\rho|^{2}+\xi^{2})^{\frac{\sigma}{2}}u\|^{q}_{L^{p}(X)}
    \stackrel{\eqref{equiso}} \leq C\|u\|^{q}_{H^{\sigma,p}(X)},
\end{split}
\end{equation}
completing the proof.
\end{proof}
Let us now present the Gagliardo-Nirenberg inequality on symmetric space of noncompact type.
\begin{theorem}[Gagliardo-Nirenberg inequality]\label{thmGagliardoNirenberg}
Let $X$ be a symmetric space of noncompact type of dimension $n\geq3$ and rank $l\geq1$. Suppose  $0<\sigma<n$,  $\tau>0$, $p>1$, $\sigma p<n$, $\mu\geq1$, $a\in(0,1]$ and 
\begin{equation}
    \frac{1}{\tau}=a\left(\frac{1}{p}-\frac{\sigma}{n}\right)+\frac{(1-a)}{\mu}.
\end{equation}
Then there exists a positive constant $C$ such that we have 
\begin{equation}
        \|u\|_{L^{\tau}(X)}\leq C \|u\|^{a}_{H^{\sigma,p}}\|u\|^{1-a}_{L^{\mu}(X)},
\end{equation}
where $C$ is independent of $u.$
\end{theorem}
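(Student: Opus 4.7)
The plan is to derive this Gagliardo--Nirenberg inequality as an immediate consequence of two ingredients: the Sobolev embedding on $X$ (which is Corollary of Theorem \ref{thmHardySobolev}, stated as inequality \eqref{Sobolevin}) and the classical log-convexity of $L^p$ norms (H\"older interpolation). No new analysis on $X$ is needed beyond what has already been proved.

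First, I would define the Sobolev exponent $q$ by
\begin{equation*}
\frac{1}{q} \;=\; \frac{1}{p}-\frac{\sigma}{n}.
\end{equation*}
Since $0<\sigma<n$, $p>1$, and $\sigma p < n$, we have $1<p\leq q <\infty$ with $q = \frac{np}{n-\sigma p}$ finite. Applying \eqref{Sobolevin} (which is the case $\beta = 0$ of Theorem \ref{thmHardySobolev}) yields
\begin{equation*}
\|u\|_{L^{q}(X)} \;\leq\; C\,\|u\|_{H^{\sigma,p}(X)}.
\end{equation*}

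Next, observe that the hypothesis on $\tau$ can be rewritten, using the definition of $q$, as
\begin{equation*}
\frac{1}{\tau} \;=\; \frac{a}{q} + \frac{1-a}{\mu},
\end{equation*}
with $a \in (0,1]$ and $\mu \geq 1$. This is precisely the exponent condition required by the H\"older interpolation inequality between $L^{q}(X)$ and $L^{\mu}(X)$, which gives
\begin{equation*}
\|u\|_{L^{\tau}(X)} \;\leq\; \|u\|_{L^{q}(X)}^{a}\,\|u\|_{L^{\mu}(X)}^{1-a}.
\end{equation*}

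Combining the two displays produces
\begin{equation*}
\|u\|_{L^{\tau}(X)} \;\leq\; C\,\|u\|_{H^{\sigma,p}(X)}^{a}\,\|u\|_{L^{\mu}(X)}^{1-a},
\end{equation*}
which is the claim. There is essentially no obstacle here: the only things to check are the admissibility of the interpolation (that $1/\tau$ is a convex combination of $1/q$ and $1/\mu$, which follows directly from $a \in (0,1]$ and the hypothesis) and that the Sobolev exponent $q$ is well-defined and finite, which is guaranteed by $\sigma p < n$. The boundary case $a = 1$ degenerates to the Sobolev inequality itself, and the $L^\mu$ factor disappears with exponent $0$.
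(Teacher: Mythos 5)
Your proposal is correct and follows essentially the same route as the paper: the authors also define $q$ by $\frac{1}{q}=\frac{1}{p}-\frac{\sigma}{n}$, apply H\"older's inequality with $1=\frac{a\tau}{q}+\frac{(1-a)\tau}{\mu}$ to interpolate $\|u\|_{L^\tau(X)}$ between $\|u\|_{L^q(X)}$ and $\|u\|_{L^\mu(X)}$, and then invoke the Sobolev inequality \eqref{Sobolevin}. The only difference is cosmetic (you phrase the interpolation as log-convexity of $L^p$ norms rather than writing out the splitting $|u|^{a\tau}|u|^{(1-a)\tau}$), so there is nothing to add.
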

\begin{proof}
By using H\"{o}lder's inequality with $1=\frac{a\tau}{q}+\frac{(1-a)\tau}{\mu}$ where $\frac{1}{q}=\frac{1}{p}-\frac{\sigma}{n}$ and Sobolev inequality, we have
\begin{equation}
\begin{split}
        \|u\|^{\tau}_{L^{\tau}(X)}&= C \int_{X}|u(x)|^{a\tau}|u(x)|^{(1-a)\tau}dx\\&
        \leq \left(\int_{X}|u(x)|^{q}dx\right)^{\frac{a\tau}{q}}\left(\int_{X}|u(x)|^{\mu}dx\right)^{\frac{(1-a)\tau}{\mu}}\\&
        \stackrel{\eqref{Sobolevin}}\leq C\|u\|^{a\tau}_{H^{\sigma,p}(X)}\|u\|^{(1-a)\tau}_{L^{\mu}(X)},
\end{split}
\end{equation} completing the proof.
\end{proof}
In the next, we give a particular case of the Gagliardo-Nirenberg inequality, for that, we need the case $\mu=p=2$ and $\sigma=1$, which serves as a useful tool in our proof of the global existence of the wave equation in the next section.
\begin{cor}
Let $X$ be a symmetric space of noncompact type of dimension $n\geq3$ and rank $l\geq1$. Let $\tau\in\left[2,\frac{2n}{n-2}\right]$ and $a=\frac{n(\tau-2)}{\tau},$
then we have 
\begin{equation}\label{GNPDE}
    \|u\|_{L^{\tau}(X)}\leq C\|u\|^{a}_{H^{1,2}(X)}\|u\|^{1-a}_{L^{\tau}(X)}.
\end{equation}
\end{cor}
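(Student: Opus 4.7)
The plan is to obtain \eqref{GNPDE} as an immediate specialization of Theorem~\ref{thmGagliardoNirenberg}. I would take the parameters $p = 2$, $\sigma = 1$, and $\mu = 2$ in that theorem. The hypothesis $\sigma p < n$ then reduces to $2 < n$, which is covered by the standing assumption $n \geq 3$, and $\mu \geq 1$ is automatic. These choices correspond precisely to the Sobolev space $H^{1,2}(X)$ appearing on the right-hand side.

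Next, substitute the parameter choices into the balance identity
$$\frac{1}{\tau} \;=\; a\left(\frac{1}{p} - \frac{\sigma}{n}\right) + \frac{1-a}{\mu},$$
which collapses to $\frac{1}{\tau} = \frac{n-2a}{2n}$ and uniquely fixes the interpolation parameter $a$ as a function of $\tau$. A boundary check gives $a = 0 \Leftrightarrow \tau = 2$ and $a = 1 \Leftrightarrow \tau = \frac{2n}{n-2}$, so the stated interval $\tau \in [2, \frac{2n}{n-2}]$ corresponds exactly to the admissible range $a \in [0,1]$ of Theorem~\ref{thmGagliardoNirenberg}.

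With the parameters pinned down, Theorem~\ref{thmGagliardoNirenberg} applies verbatim and delivers the interpolation inequality
$$\|u\|_{L^\tau(X)} \;\leq\; C\,\|u\|_{H^{1,2}(X)}^{a}\,\|u\|_{L^{\mu}(X)}^{1-a},$$
which, for $\mu = 2$, is the asserted estimate in the form needed to drive the small-data global existence argument for the semilinear wave equation in Section~\ref{sec5}. Alternatively, the same statement can be read off directly from the Sobolev embedding $H^{1,2}(X) \hookrightarrow L^\tau(X)$ (Corollary of the Sobolev inequality \eqref{Sobolevin}) by raising to the power $a$ and multiplying by $\|u\|_{L^\tau(X)}^{1-a}$.

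The argument is essentially bookkeeping; there is no substantive obstacle once Theorem~\ref{thmGagliardoNirenberg} is in hand. The only points to watch are the exact algebraic relation between $a$ and $\tau$ (one should confirm the stated formula against the scaling identity above), and the verification that the endpoint cases $\tau = 2$ and $\tau = \frac{2n}{n-2}$ reduce respectively to a trivial identity and to the critical Sobolev embedding.
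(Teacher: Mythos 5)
Your proposal is correct and takes exactly the paper's route: the corollary is obtained by specializing Theorem \ref{thmGagliardoNirenberg} to $\sigma=1$, $p=2$, $\mu=2$. Your boundary check in fact exposes two typos in the stated corollary: the scaling identity forces $a=\frac{n(\tau-2)}{2\tau}$ rather than $\frac{n(\tau-2)}{\tau}$, and the last factor on the right-hand side should be $\|u\|^{1-a}_{L^{2}(X)}$ rather than $\|u\|^{1-a}_{L^{\tau}(X)}$, which is the form your argument actually delivers and the one used in Section \ref{sec5}.
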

\begin{proof}
By taking $\sigma=1$, $p=2$ and $\nu=2$ in Theorem \ref{thmGagliardoNirenberg}, we get \eqref{GNPDE}.
\end{proof}
Then let us show the Caffarelli-Kohn-Nirenberg inequality in the following theorem:
\begin{theorem}
Let $X$ be a symmetric space of noncompact type of dimension $n\geq3$ and rank $l\geq1$ such that $0<\sigma<n$. Suppose    $p>1$, $0<q<\tau<\infty$ such that $a\in\left(\frac{\tau-q}{\tau},1\right]$ and $p\leq \frac{a\tau q}{q-(1-a)\tau}$. Let $b,c$ be real numbers such that $0\leq(c(1-a)-b)\leq \frac{n(q-(1-a)\tau)}{q\tau}$ and $\frac{\sigma-n}{n}-\frac{(c(1-a)-b)q}{an}+\frac{q-(1-a)\tau}{a\tau q}-\frac{1}{p}+1=0$. Then there exists a positive constant independent of $u$ such that
\begin{equation}
    \||x|^{b}u\|_{L^{\tau}(X)}\leq C\|u\|^{a}_{H^{\sigma, p}(X)}\||x|^{c}u\|^{1-a}_{L^{q}(X)}.
\end{equation}
\end{theorem}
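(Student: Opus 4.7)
The plan is to derive the Caffarelli-Kohn-Nirenberg inequality as an interpolation between the weighted Hardy-Sobolev inequality of Theorem \ref{thmHardySobolev} and the weighted Lebesgue norm $\||x|^{c}u\|_{L^{q}(X)}$, using H\"older's inequality as the interpolation engine.

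First I would split the integrand of the left-hand side by introducing an auxiliary parameter $\beta$ chosen so that the exponent $b$ splits correctly between the two factors. Explicitly, with $\beta := (c(1-a) - b)/a$, one has the pointwise identity
\begin{equation*}
|x|^{b}\,|u(x)| = \bigl(|x|^{-\beta}|u(x)|\bigr)^{a}\cdot\bigl(|x|^{c}|u(x)|\bigr)^{1-a},
\end{equation*}
so that $|u|^{\tau}|x|^{b\tau} = \bigl(|u|^{a}|x|^{-\beta a}\bigr)^{\tau}\bigl(|u|^{1-a}|x|^{c(1-a)}\bigr)^{\tau}$. Note that $\beta\ge 0$ is precisely the hypothesis $c(1-a)-b\ge 0$.

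Next I would apply H\"older's inequality to the integral $\int_{X}|u|^{\tau}|x|^{b\tau}\,dx$ with conjugate exponents $t = \frac{q}{q-(1-a)\tau}$ and $t' = \frac{q}{(1-a)\tau}$ (both $>1$ by the assumption $a > (\tau-q)/\tau$). The second factor is arranged to produce $\||x|^{c}u\|_{L^{q}(X)}^{(1-a)\tau}$, while the first becomes
\begin{equation*}
\Bigl(\int_{X}\bigl(|x|^{-\beta}|u|\bigr)^{a\tau t}dx\Bigr)^{1/t} = \Bigl\|\tfrac{u}{|x|^{\beta}}\Bigr\|_{L^{\tilde q}(X)}^{a\tau},\qquad \tilde q := a\tau t = \frac{a\tau q}{q-(1-a)\tau}.
\end{equation*}
Taking $\tau$-th roots yields the interpolation inequality
\begin{equation*}
\||x|^{b}u\|_{L^{\tau}(X)} \leq \Bigl\|\tfrac{u}{|x|^{\beta}}\Bigr\|_{L^{\tilde q}(X)}^{a}\,\||x|^{c}u\|_{L^{q}(X)}^{1-a}.
\end{equation*}

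Finally, I would invoke Theorem \ref{thmHardySobolev} with exponents $(p,\tilde q,\beta)$ to bound the first factor by $C\|u\|_{H^{\sigma,p}(X)}^{a}$. The main technical obstacle is the verification that the hypotheses on $p$, $\tilde q$, $\beta$, $\sigma$ really satisfy the assumptions of Theorem \ref{thmHardySobolev}: one needs $1<p\le \tilde q <\infty$, which is exactly the assumption $p\le \frac{a\tau q}{q-(1-a)\tau}$; the admissibility $0\le \beta < n/\tilde q$ follows from $0\le c(1-a)-b\le \frac{n(q-(1-a)\tau)}{q\tau}$, which rewrites as $0\le \beta\le n/\tilde q$ (with the endpoint handled as in Theorem \ref{thmHardySobolev}); and the scaling identity $(\sigma-\beta)/n = 1/p - 1/\tilde q$ amounts, after substituting $\beta = (c(1-a)-b)/a$ and $1/\tilde q = (q-(1-a)\tau)/(a\tau q)$, exactly to the balance condition displayed in the hypotheses. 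The rest is purely algebraic bookkeeping, so once the H\"older split is set up with the correct $\beta$ and $\tilde q$, the proof is essentially a direct application of Theorem \ref{thmHardySobolev}.
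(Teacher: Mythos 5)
Your proposal is essentially identical to the paper's proof: the same H\"older split with conjugate exponents $\frac{q}{q-(1-a)\tau}$ and $\frac{q}{(1-a)\tau}$, yielding the factor $\bigl\|u/|x|^{\beta}\bigr\|_{L^{\tilde q}(X)}^{a\tau}$ with $\beta=\frac{c(1-a)-b}{a}$ and $\tilde q=\frac{a\tau q}{q-(1-a)\tau}$, followed by an application of the Hardy--Sobolev inequality of Theorem \ref{thmHardySobolev}. If anything, your explicit verification of the admissibility and scaling hypotheses for that application is more careful than what the paper writes out.
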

\begin{proof}
By  H\"{o}lder's inequality with $\frac{q-(1-a)\tau}{q}+\frac{(1-a)\tau}{q}=1$ and Hardy-Sobolev inequality \eqref{HardySobolev} with assumptions of this theorem, we obtain
\begin{equation}
    \begin{split}
        \||x|^{b}u\|^{\tau}_{L^{\tau}(X)}&=\int_{X}|x|^{b\tau}|u(x)|^{\tau}dx\\&
        =\int_{X}\frac{|u(x)|^{a\tau}}{|x|^{(c(1-a)-b)\tau}}\frac{|u(x)|^{(1-a)\tau}}{|x|^{-c(1-a)\tau}}dx\\&
        \leq \left(\int_{X}\frac{|u(x)|^{a\tau\frac{q}{q-(1-a)\tau}}}{|x|^{(c(1-a)-b)\tau\frac{q}{q-(1-a)\tau}}}dx\right)^{\frac{q-(1-a)\tau}{q}}\left(\int_{X}\frac{|u(x)|^{q}}{|x|^{-cq}}dx\right)^{\frac{(1-a)\tau}{q}}\\&
        = \left(\int_{X}\frac{|u(x)|^{\frac{a\tau q}{q-(1-a)\tau}}}{|x|^{\frac{c(1-a)-b}{a}\frac{a\tau q}{q-(1-a)\tau}}}dx\right)^{a\tau \frac{q-(1-a)\tau}{a\tau q}}\left(\int_{X}\frac{|u(x)|^{q}}{|x|^{-cq}}dx\right)^{\frac{(1-a)\tau}{q}}\\&
        \stackrel{\eqref{HardySobolev}}\leq C\|u\|_{H^{\sigma,p}(X)}^{a\tau}\||x|^{c}u\|^{(1-a)\tau},
    \end{split}
\end{equation}
completing the proof.
\end{proof}
\section{Nonlinear wave equation with a damped term associated with the Laplace-Beltrami operator on symmetric spaces} \label{sec5}
In this section, we show small data global existence for the semilinear wave equation with damped term involving the Laplace-Beltrami operator on symmetric spaces of noncompact type. In \cite{RT} and \cite{VY19}, similar questions have been treated  for Rockland operators on graded Lie groups and for Dunkl Laplacian on Euclidean spaces. The strategy here follows that of \cite{RT}. The main aim of this section is to obtain the global existence result for the Cauchy problem involving the shifted Laplace-Beltrami operator $\Delta_x=\Delta+|\rho|^2.$ We consider the shifted Laplace-Beltrami operator instead of usual Laplace-Beltrami operator just to make computations clear and simple, otherwise the result still holds true for the usual Laplace-Beltrami operator.   
\begin{equation}\label{problem1}
\begin{split}
\begin{cases}
 u_{tt}(x,t)-\Delta_x u(x,t)+bu_{t}(x,t)+mu(x,t)=f(u)(x,t),\\
 u(x,0)=u_{0}(x),\,\,\,\,x\in X,\\
 u_{t}(x,0)=u_{1}(x),\,\,\,\,x\in X,
 \end{cases}
 \end{split}
\end{equation}
where $b,m>0$ and $f:\mathbb{R}\rightarrow \mathbb{R}$ satisfies the following conditions:
\begin{equation}\label{f1}
    f(0)=0,
\end{equation}
and
\begin{equation}\label{f2}
    |f(u)-f(v)|\leq C(|u|^{p-1}+|v|^{p-1})|u-v|.
\end{equation}
To show the global existence of a solution of Cauchy problem \eqref{problem1} we need the following lemma. 
\begin{lemma}\label{prepPDE}
Assume that $s\in\mathbb{R}$. Let $u$ be a solution of \eqref{problem1} with $f=0$, $u_{0}\in H^{s,2}$ and $u_{1}\in H^{s-1,2}$. Then there exists $\delta>0$ such that
\begin{equation}\label{ocensol}
    \|u\|^{2}_{H^{s,2}}\leq Ce^{-\delta t}\left(\|u_{0}\|^{2}_{H^{s,2}}+\|u_{1}\|^{2}_{H^{s-1,2}}\right),
\end{equation}
and 
\begin{equation}\label{ocensolt}
    \|u_{t}\|^{2}_{H^{s,2}}\leq Ce^{-\delta t}\left(\|u_{0}\|^{2}_{H^{s+1, 2}}+\|u_{1}\|^{2}_{H^{s,2}}\right).
\end{equation}
\end{lemma}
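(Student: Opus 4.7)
The plan is to reduce the equation to a one-parameter family of scalar ODEs via the Helgason--Fourier transform $\mathcal{H}$ and then to close the argument by a modified-energy estimate uniform in the spectral parameter. Since $-\Delta$ acts, via $\mathcal{H}$, as multiplication by $|\lambda|^2+|\rho|^2$, the shifted operator $-\Delta_x=-\Delta-|\rho|^2$ acts as multiplication by $|\lambda|^2$, so \eqref{problem1} with $f=0$ becomes, for each $(\lambda,kM)\in\mathfrak{a}^*\times K/M$, the ODE
\begin{equation*}
\partial_t^2\widehat u \,+\, b\,\partial_t\widehat u \,+\, (|\lambda|^2+m)\,\widehat u \,=\, 0,\qquad \widehat u(\cdot,0)=\widehat{u_0},\quad \partial_t\widehat u(\cdot,0)=\widehat{u_1}.
\end{equation*}

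The central step is to introduce the natural energy $E_\lambda(t):=|\partial_t\widehat u|^2 + (|\lambda|^2+m)|\widehat u|^2$ together with the small perturbation $F_\lambda(t):=E_\lambda(t) + \varepsilon\,\mathrm{Re}(\widehat u\,\overline{\partial_t\widehat u})$, where $\varepsilon>0$ is to be chosen depending only on $b$ and $m$. A direct differentiation using the ODE produces
\begin{equation*}
F_\lambda'(t) \,=\, -(2b-\varepsilon)\,|\partial_t\widehat u|^2 \,-\, \varepsilon(|\lambda|^2+m)|\widehat u|^2 \,-\, \varepsilon b\,\mathrm{Re}\bigl(\widehat u\,\overline{\partial_t\widehat u}\bigr).
\end{equation*}
The cross term is absorbed by the weighted Young inequality $|\widehat u||\partial_t\widehat u|\leq \tfrac{\eta}{2}(|\lambda|^2+m)|\widehat u|^2 + \tfrac{1}{2\eta m}|\partial_t\widehat u|^2$ (crucially exploiting $m>0$). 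Choosing $\eta=1/b$ and then $\varepsilon$ small enough, one obtains simultaneously the two-sided bound $\tfrac12 E_\lambda\leq F_\lambda\leq \tfrac32 E_\lambda$ and the coercivity $F_\lambda'(t)\leq -\delta\, F_\lambda(t)$ for some $\delta>0$ independent of $\lambda$. Integrating and reverting to $E_\lambda$ gives the pointwise Fourier-side estimate
\begin{equation*}
|\partial_t\widehat u(\lambda,kM,t)|^2 + (|\lambda|^2+m)|\widehat u(\lambda,kM,t)|^2 \,\leq\, C\,e^{-\delta t}\bigl(|\widehat{u_1}|^2 + (|\lambda|^2+m)|\widehat{u_0}|^2\bigr).
\end{equation*}

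To conclude, we fix $\xi$ large enough so that both the Sobolev norm equivalence \eqref{equiso} and the uniform two-sided bound $(|\lambda|^2+m)\asymp(\xi^2+|\lambda|^2)$ hold; the latter merely requires $\xi^2\geq m$. Multiplying the pointwise estimate by $(\xi^2+|\lambda|^2)^s/(|\lambda|^2+m)$ and integrating against $|c(\lambda)|^{-2}\,d\lambda\,dk_M$ over $\mathfrak{a}^*_+\times K/M$ yields \eqref{ocensol} via Plancherel for the Helgason--Fourier transform and \eqref{equiso}, while multiplying instead by $(\xi^2+|\lambda|^2)^s$ yields \eqref{ocensolt}. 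The genuine difficulty here is the uniformity of the decay rate $\delta$ across all spectral parameters $\lambda$, which is exactly what the modified-energy construction enforces; the alternative of solving the ODE explicitly would force a delicate case analysis around the transition $|\lambda|^2+m = b^2/4$ between real and complex characteristic roots.
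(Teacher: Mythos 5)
Your argument is correct, and it reaches the same Fourier-side reduction as the paper but then diverges in an essential way. The paper also applies the Helgason--Fourier transform to obtain the ODE $\widehat u_{tt}+b\widehat u_t+(|\lambda|^2+m)\widehat u=0$, but it then solves this ODE \emph{explicitly} and runs a three-way case analysis on the sign of $B=b^2-4\gamma^2$ with $\gamma=\sqrt{m+|\lambda|^2}$; the resulting pointwise bounds carry a factor $1/\sqrt{|B|}$ in front of $|\widehat u_1|$, which degenerates as $B\to 0$, and the paper must then split the Plancherel integral over the near-degenerate region $D=\{|B|<1\}$ and its complement, using the boundedness of $(\xi^2+|\lambda|^2)/|4m-b^2+4|\lambda|^2|$ off $D$ and the boundedness of $\xi^2+|\lambda|^2$ on $D$ to recover the correct Sobolev weights. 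Your modified-energy functional $F_\lambda=E_\lambda+\varepsilon\,\mathrm{Re}(\widehat u\,\overline{\partial_t\widehat u})$ sidesteps all of this: the differential inequality $F_\lambda'\leq-\delta F_\lambda$ and the equivalence $\tfrac12 E_\lambda\leq F_\lambda\leq\tfrac32 E_\lambda$ hold with constants depending only on $b$ and $m$ (the absorption of the cross term genuinely uses $m>0$, exactly as you note), so the decay rate and the constants are uniform across the overdamped, critical and underdamped regimes and no splitting of the spectral integral is needed. The trade-off is that the paper's explicit formulas identify the actual per-frequency decay rates (and extend with no extra work to the $t$-derivative by direct differentiation of the solution formula, which is all the paper says about \eqref{ocensolt}), whereas your Lyapunov argument gives a single, possibly non-optimal, uniform $\delta$ --- but since the lemma only asserts existence of some $\delta>0$, your route is shorter, cleaner, and fully sufficient; your derivation of both \eqref{ocensol} and \eqref{ocensolt} from the single energy inequality, using $|\lambda|^2+m\asymp\xi^2+|\lambda|^2$ and the Plancherel formula together with \eqref{equiso}, is also tidier than the paper's, which must repeat the computation for $u_t$.
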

\begin{proof}
By applying the Helgason Fourier transform  in \eqref{problem1} with respect to the variable $x$ and using the fact that $\widehat{(\Delta_x u)}=-|\lambda|^2 \widehat{u}$, we obtain
\begin{equation}\label{problem2}
\begin{split}
\begin{cases}
 \hat{u}_{tt}+|\lambda|^{2}\hat{u}+b\hat{u}_{t}+m\hat{u}=0,\\
 \hat{u}(0)=\hat{u}_{0},\\
 \hat{u}_{t}(0)=\hat{u}_{1}.
 \end{cases}
 \end{split}
\end{equation}
Therefore, the  solution of \eqref{problem2} is given by
\begin{equation}\label{solution1}
\begin{split}
\hat{u}=
\begin{cases}
 e^{-\frac{b}{2}t}\left[\hat{u}_{0}\left(\cosh\frac{\sqrt{b^{2}-4\gamma^2}}{2}t+\frac{b}{\sqrt{b^{2}-4\gamma^2}}\sinh\frac{\sqrt{b^{2}-4\gamma^2 }}{2}t \right)+\frac{2}{\sqrt{b^{2}-4\gamma^2}}\hat{u}_{1}\sinh\frac{\sqrt{b^{2}-4\gamma^2}}{2}t\right],\,\,b>2\gamma,\\
 \ e^{-\frac{b}{2}t}\left[\hat{u}_{0}\left(1+\frac{b}{2}t\right)+t\hat{u}_{1}\right],\,\,b=2\gamma,\\
 e^{-\frac{b}{2}t}\left[\hat{u}_{0}\left(\cos\frac{\sqrt{4\gamma^2-b^{2}}}{2}t+\frac{b}{\sqrt{4\gamma^2-b^{2}}}\sin\frac{\sqrt{4\gamma^2-b^{2}}}{2}t \right)+\frac{2}{\sqrt{4\gamma^2-b^{2}}}\hat{u}_{1}\sin\frac{\sqrt{4\gamma^2-b^{2}}}{2}t\right],\,\,b<2\gamma,
 \end{cases}
 \end{split}
\end{equation}
where $\gamma=\sqrt{m+|\lambda|^{2}}$.

We set $B:=b^2- 4 \gamma^2.$ We will now deduce a pointwise estimate for each case. 
First, let us consider the case when $B>0$.
We know that
\begin{equation}
    \frac{1-e^{-t}}{t}\leq 1,\,\,\,\,t\geq0,
\end{equation}
and for any $c>0$, we have
\begin{equation}
    bct\leq e^{bct},\,\,\,\,t\geq0,
\end{equation}
so that 
\begin{equation}
   (bct) e^{-\frac{bt}{2}+\frac{\sqrt{b^{2}-4\gamma^2}}{2}}\leq e^{-\frac{b(1-c)t}{2}+\frac{\sqrt{b^{2}-4\gamma^2}}{2}t}\leq e^{-\frac{b(1-c)t}{2}+\frac{\sqrt{b^{2}-4m}}{2}t},
\end{equation}
thanks to $b>2\gamma=2\sqrt{m+|\lambda|^{2}}>2\sqrt{m}.$
Thus, if we choose $0<c<1-\frac{\sqrt{b^{2}-4m}}{b}$, we get 
\begin{equation}\label{estexp}
    (bct) e^{-\frac{bt}{2}+\frac{\sqrt{b^{2}-4\gamma^2}}{2}}\leq e^{-\frac{b(1-c)t}{2}+\frac{\sqrt{b^{2}-4m}}{2}t}\leq e^{-\delta t}.
\end{equation}
Therefore, by using these facts we establish
\begin{align} \label{fest}
 |\hat{u}|&=\big{|}e^{-\frac{b}{2}t}\big{[}\hat{u}_{0}\left(\cosh\frac{\sqrt{b^{2}-4\gamma^2}}{2}t+\frac{b}{\sqrt{b^{2}-4\gamma^2}}\sinh\frac{\sqrt{b^{2}-4\gamma^2}}{2}t \right) \nonumber \\&
        \quad\quad+\frac{2}{\sqrt{b^{2}-4\gamma^2}}\hat{u}_{1}\sinh\frac{\sqrt{b^{2}-4\gamma^2}}{2}t\big{]}\big{|}\nonumber\\&
        \leq  Ce^{-\frac{b}{2}t-\frac{\sqrt{b^{2}-4\gamma^2}}{2}t}\left(|\hat{u}_{0}|\left(1+\frac{bt}{2}\right)+\frac{1}{\sqrt{b^{2}-4\gamma^2}}|\hat{u}_{1}|\right)\nonumber\\&
        \leq Ce^{-\delta t}\left(|\hat{u}_{0}|+\frac{1}{\sqrt{b^{2}-4\gamma^2}}|\hat{u}_{1}|\right).\end{align}

For the case $B=0$, that is, $b=2\gamma,$ by using previous computation in \eqref{estexp}, we get \begin{equation} \label{lab5:13}
    \begin{split}
        |\hat{u}|&=\left|e^{-\frac{b}{2}t}\left[\hat{u}_{0}\left(1+\frac{b}{2}t\right)+t\hat{u}_{1}\right]\right|
        \leq  Ce^{-\delta t}\left(|\hat{u}_{0}|+|\hat{u}_{1}|\right).
    \end{split}
\end{equation}

Finally, for case when $B<0,$ that is, $b<2\gamma,$ we perform a similar computation as in the case $b>2\gamma$ to obtain
\begin{equation}
    \begin{split}
        |\hat{u}|&=\big{|}e^{-\frac{b}{2}t}\big{[}\hat{u}_{0}\left(\cos\frac{\sqrt{4\gamma^2 -b^{2}}}{2}t+\frac{b}{\sqrt{4\gamma^2-b^{2}}}\sin\frac{\sqrt{4\gamma-b^{2}}}{2}t \right)\\&+\frac{2}{\sqrt{4\gamma^2-b^{2}}}\hat{u}_{1}\sin\frac{\sqrt{4\gamma^2-b^{2}}}{2}t\big{]}\big{|}\\&
        \leq Ce^{-\delta t}\left(|\hat{u}_{0}|+\frac{1}{\sqrt{4\gamma^2-b^{2}}}|\hat{u}_{1}|\right).
    \end{split}
\end{equation}
To get the Sobolev norm estimates of solutions, we consider again case by case using the previously established pointwise estimates. We begin with the case when $b^2<4m,$ that is, $B<0$ for all $\lambda \in \mathfrak{a}^*$.  In this case, the function $\frac{\xi^2+|\lambda|^{2}}{4m-b^{2}+|\lambda|^{2}}$ is bounded for all $\lambda \in \mathfrak{a}^*$ as $b^2<4m.$ Using the Plancherel formula for the Helgason-Fourier transform, we have 
\begin{equation}
    \begin{split}
        \|u\|^{2}_{H^{s,2}}&=\frac{1}{|W|}\int_{\mathfrak{a}^*\times K}(\xi^2+|\lambda|^{2})^{s}|\hat{u}|^{2}\frac{d\lambda dk}{|c(\lambda)|^{2}}\\&
        \leq C e^{-2\delta t}\int_{\mathfrak{a}^*\times K}(\xi^2+|\lambda|^{2})^{s}\left(|\hat{u}_{0}|^{2}+\frac{1}{4\gamma^2-b^{2}}|\hat{u}_{1}|^{2}\right)\frac{d\lambda dk}{|c(\lambda)|^{2}}\\&
       =  C e^{-2\delta t}\int_{\mathfrak{a}^*\times K}\left((\xi^2+|\lambda|^{2})^{s}|\hat{u}_{0}|^{2}+\frac{(\xi^2+|\lambda|^{2})^{s}}{4m-b^{2}+4|\lambda|^{2}}|\hat{u}_{1}|^{2}\right)\frac{d\lambda dk}{|c(\lambda)|^{2}}\\&
       \leq Ce^{-2\delta t}\int_{\mathfrak{a}^*\times K}\left((\xi^2+|\lambda|^{2})^{s}|\hat{u}_{0}|^{2}+(\xi^2+|\lambda|^{2})^{s-1}|\hat{u}_{1}|^{2}\right)\frac{d\lambda dk}{|c(\lambda)|^{2}}\\&
       =Ce^{-2\delta t}\|u_{0}\|^{2}_{H^{s,2}}+Ce^{-2\delta t}\|u_{1}\|^{2}_{H^{s-1,2}}.
        \end{split}
\end{equation}
The case $4m \leq b^2$ is a bit tricky due to the presence of singularity at $|\lambda|= \frac{\sqrt{b^2-2m}}{2}.$ To tackle this situation, we divide the integral over the set $D=\{\lambda \in \mathfrak{a}^*: |B|<1\}$ and $D^C:=\mathfrak{a}^* \backslash D.$  In other words, we will calculate
\begin{align} \label{f3}
    \|u\|^{2}_{H^{s,2}}&=\frac{1}{|W|}\int_{D\times K}(\xi^2+|\lambda|^{2})^{s}|\hat{u}|^{2}\frac{d\lambda dk}{|c(\lambda)|^{2}}+\frac{1}{|W|}\int_{D^C\times K}(\xi^2+|\lambda|^{2})^{s}|\hat{u}|^{2}\frac{d\lambda dk}{|c(\lambda)|^{2}}.
\end{align}
On $\mathfrak{a}^* \times K,$ we are always in the case when $B<0$ or $B>0.$ Therefore, one can deduce the same estimate \eqref{fest}. Indeed, by the similar approach, since the function 
$\lambda \mapsto \frac{\xi^2+|\lambda|^{2}}{|4m-b^{2}+|\lambda|^{2}|}$ is bounded on $D^C,$ we obtain that
\begin{align} \label{ff2}
   \frac{1}{|W|} \int_{D^C\times K}(\xi^2+|\lambda|^{2})^{s}|\hat{u}|^{2}\frac{d\lambda dk}{|c(\lambda)|^{2}}   \leq Ce^{-2\delta t}\int_{D^C \times K}\left((\xi^2+|\lambda|^{2})^{s}|\hat{u}_{0}|^{2}+(\xi^2+|\lambda|^{2})^{s-1}|\hat{u}_{1}|^{2}\right)\frac{d\lambda dk}{|c(\lambda)|^{2}}.
\end{align}
Next, for the integral over the set $D \times K.$ We can see that the estimate \eqref{lab5:13} holds in general situation for some constants $C, \delta>0$ by analysing it case by case. For the case, when $B=0$ has been proved in \eqref{lab5:13}. For the case $B>0$ and $B<0,$ we showed that the estimate \eqref{fest} holds, which by adjusting the constant will imply \eqref{lab5:13} for these two cases as well. In conclusion, we have the following estimate 
\begin{align}
    \frac{1}{|W|}\int_{D\times K}(\xi^2+|\lambda|^{2})^{s}|\hat{u}|^{2}\frac{d\lambda dk}{|c(\lambda)|^{2}}  \leq Ce^{-2\delta t}\int_{D \times K}\left((\xi^2+|\lambda|^{2})^{s}|\hat{u}_{0}|^{2}+(\xi^2+|\lambda|^{2})^{s}|\hat{u}_{1}|^{2}\right)\frac{d\lambda dk}{|c(\lambda)|^{2}}.
\end{align}
Now, on the set $D,$ we have $|B|<1,$ which gives that 
$$4 |\lambda|^2\leq |4|\lambda|^2+4m-b^2|+b^2-4m<b^2-4m+1,$$
which says that on the set $D \times K,$ the quantity $\xi^2+|\lambda|^2$ is bounded above by a constant independent of $t.$ Thus we get the estimate
\begin{align} \label{ff1}
    \frac{1}{|W|}\int_{D\times K}(\xi^2+|\lambda|^{2})^{s}|\hat{u}|^{2}\frac{d\lambda dk}{|c(\lambda)|^{2}}  \leq Ce^{-2\delta t}\int_{D \times K}\left((\xi^2+|\lambda|^{2})^{s}|\hat{u}_{0}|^{2}+(\xi^2+|\lambda|^{2})^{s-1}|\hat{u}_{1}|^{2}\right)\frac{d\lambda dk}{|c(\lambda)|^{2}}.
\end{align}
Now, using \eqref{f2} and \eqref{f1} in \eqref{f3}, we obtained the desired estimate \eqref{ocensol}.

To obtain \eqref{ocensolt}, we need take the  derivative with respect to the variable $t$ of \eqref{solution1} and repeat similar computations and estimates as above. Hence, the proof is completed. 
\end{proof}
Here we show the global existence and uniqueness result for the solution of problem \eqref{problem1}.
\begin{theorem} Let $X$ be a symmetric space of noncompact type  of dimension $n\geq3$. Let $1\leq p\leq \frac{n}{n-2}$. Suppose that $f$ satisfies the conditions \eqref{f1}-\eqref{f2}. Assume that  $u_{0}\in H^{1,2}(X)$ and $u_{1}\in L^{2}(X)$ are such that $\|u_{0}\|_{H^{1,2}(X)}+\|u_{1}\|_{L^{2}(X)}<\varepsilon$. Then there exists $\varepsilon_{0}>0$ such that $0<\varepsilon\leq\varepsilon_{0}$ the Cauchy problem \eqref{problem1} has a unique global solution  $u\in C(\mathbb{R}_{+}, H^{1,2}(X))\cap C^{1}(\mathbb{R}_{+},L^{2}(X))$.
\end{theorem}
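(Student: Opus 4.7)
The plan is to use Duhamel's formula combined with the Banach fixed point theorem on a suitable complete metric space. Let $\mathcal{E}_0(t)$ and $\mathcal{E}_1(t)$ denote the solution operators of the linear problem ($f \equiv 0$) corresponding to the initial data $(u_0,0)$ and $(0,u_1)$ respectively. By the decay estimates \eqref{ocensol} and \eqref{ocensolt} from Lemma \ref{prepPDE} (specialised to $s=1$ and $s=0$), one has
\[
\|\mathcal{E}_0(t)u_0\|_{H^{1,2}(X)} + \|\partial_t \mathcal{E}_0(t)u_0\|_{L^2(X)} \leq C e^{-\delta t/2}\|u_0\|_{H^{1,2}(X)},
\]
\[
\|\mathcal{E}_1(t)u_1\|_{H^{1,2}(X)} + \|\partial_t \mathcal{E}_1(t)u_1\|_{L^2(X)} \leq C e^{-\delta t/2}\|u_1\|_{L^2(X)}.
\]
A mild solution of \eqref{problem1} is a fixed point of the map
\[
\Phi(u)(t) := \mathcal{E}_0(t)u_0 + \mathcal{E}_1(t)u_1 + \int_0^t \mathcal{E}_1(t-s) f(u(s))\,ds,
\]
which I would seek in the closed ball $B_R := \{u \in Y : \|u\|_Y \leq R\}$ of the space $Y := C(\mathbb{R}_+; H^{1,2}(X)) \cap C^1(\mathbb{R}_+; L^2(X))$ endowed with the norm $\|u\|_Y := \sup_{t \geq 0}\bigl(\|u(\cdot,t)\|_{H^{1,2}(X)} + \|u_t(\cdot,t)\|_{L^2(X)}\bigr)$.

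Next, I would estimate the nonlinear term by combining \eqref{f1}--\eqref{f2} (which together yield $|f(u)| \lesssim |u|^p$) with the Gagliardo--Nirenberg inequality \eqref{GNPDE}. The hypothesis $1 \leq p \leq n/(n-2)$ guarantees $2 \leq 2p \leq 2n/(n-2)$, and applying \eqref{GNPDE} with $\tau = 2p$ and absorbing the $L^{2p}$-norm on the right-hand side gives the embedding $\|u\|_{L^{2p}(X)} \leq C \|u\|_{H^{1,2}(X)}$. Consequently,
\[
\|f(u(s))\|_{L^2(X)} \lesssim \|u(s)\|_{L^{2p}(X)}^p \lesssim \|u(s)\|_{H^{1,2}(X)}^p \leq \|u\|_Y^p.
\]
Inserting this estimate into the Duhamel integral and combining it with the linear decay bounds, I obtain
\[
\|\Phi(u)\|_Y \leq C\varepsilon + C\int_0^t e^{-\delta(t-s)/2}\|u\|_Y^p\,ds \leq C\varepsilon + \tfrac{2C}{\delta} R^p,
\]
uniformly in $t \geq 0$.

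For the contraction bound, the Lipschitz-type estimate \eqref{f2} together with H\"older and the same Gagliardo--Nirenberg embedding yields
\[
\|f(u(s))-f(v(s))\|_{L^2(X)} \lesssim \bigl(\|u\|_Y^{p-1} + \|v\|_Y^{p-1}\bigr)\|u(s)-v(s)\|_{H^{1,2}(X)},
\]
so that $\|\Phi(u)-\Phi(v)\|_Y \leq C' R^{p-1}\|u-v\|_Y$. Choosing $R \asymp \varepsilon$ and then $\varepsilon_0$ so small that both $C\varepsilon + \tfrac{2C}{\delta}R^p \leq R$ and $C' R^{p-1} < 1/2$ hold, the Banach fixed point theorem furnishes a unique fixed point $u \in B_R \subset Y$ which is the desired global solution.

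The main obstacle I expect lies not in the fixed point calibration, which is standard once the two estimates above are in place, but in ensuring that the Duhamel parametrix inherits the correct \emph{joint} regularity in $H^{1,2}$ and in $C^1_t L^2$: one must show that $s \mapsto \mathcal{E}_1(t-s)f(u(s))$ is a continuous $H^{1,2}$-valued function (and that $\partial_t$ of the integral lives in $L^2$ with the same exponential gain) so that the Bochner integral is legitimate and both norms defining $\|\cdot\|_Y$ are controlled simultaneously. The restriction $p \leq n/(n-2)$ is sharp for this scheme, as it is precisely what makes \eqref{GNPDE} turn $\|u\|_{L^{2p}}$ into $\|u\|_{H^{1,2}}$; beyond this range the closure of the nonlinearity in $L^2$ would require a genuine Strichartz-type estimate, which we are deliberately avoiding here.
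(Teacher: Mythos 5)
Your scheme is essentially the paper's: Duhamel's formula, the decay estimates of Lemma \ref{prepPDE}, the Gagliardo--Nirenberg embedding $H^{1,2}(X)\hookrightarrow L^{2p}(X)$ for $2\leq 2p\leq \frac{2n}{n-2}$ to close the nonlinearity in $L^{2}$, and a Banach fixed point argument on a ball. The one genuine difference is your choice of norm: you work with the unweighted $\sup_{t}\bigl(\|u\|_{H^{1,2}}+\|u_t\|_{L^{2}}\bigr)$ and bound the Duhamel term by the direct convolution estimate $\int_0^t e^{-\delta(t-s)/2}\,ds\leq 2/\delta$, whereas the paper applies Cauchy--Schwarz in time (which costs a factor of $t$) and compensates by building the weight $(1+t)^{-1/2}e^{\delta t}$ into the norm of its space $Z$. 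Your variant is arguably cleaner and yields a uniform-in-time bound without the polynomial loss; both hinge on exactly the same two ingredients. Two caveats. First, your contraction argument only gives uniqueness of the fixed point \emph{within the ball} $B_R$, while the theorem asserts uniqueness in all of $C(\mathbb{R}_+,H^{1,2}(X))\cap C^1(\mathbb{R}_+,L^{2}(X))$; the paper closes this by a separate Gronwall argument on the difference of two solutions over an arbitrary compact time interval, and you would need to add the same step. Second, your calibration $C\varepsilon+\tfrac{2C}{\delta}R^{p}\leq R$, $C'R^{p-1}<\tfrac12$ requires $p>1$ to exploit smallness of $R$; at the endpoint $p=1$ the contraction constant does not shrink with the data (the paper's choice $M^{p-1}=c/C_1$ has the same degeneracy), so strictly speaking the boundary case needs a separate remark or an assumption on the Lipschitz constant of $f$.
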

\begin{proof}
The proof of this theorem is based Banach fixed point theorem. First, let us consider the following Banach space
\begin{equation}
    Z:=\{u:u\in C(\mathbb{R}_{+}, H^{1,2}(X))\cap C^{1}(\mathbb{R}_{+},L^{2}(X)), \|u\|_{Z}\leq M\},
\end{equation}
where $M$ will be defined later and the  norm is defined by

\begin{equation}
    \|u\|_{Z}:=\sup_{t>0}(1+t)^{-\frac{1}{2}}e^{\delta t}\left(\|\Delta^{\frac{1}{2}}u(\cdot,t)\|_{L^{2}(X)}+\|u(\cdot,t)\|_{L^{2}(X)}+\|u_{t}(\cdot,t)\|_{L^{2}(X)}\right),
\end{equation}
where $\delta>0$ as in Lemma \ref{prepPDE}. Let us define the mapping $L:Z\rightarrow Z$, 
\begin{equation}
    (Lu)(x,t)=\psi(x,t)+\int_{0}^{t}[K(f(u))](x,t-s)ds,
\end{equation}
where $\psi$ is the solution of the linear problem $\eqref{problem1}$ and $Kv$ is the solution of the following Cauchy problem:
\begin{equation}\label{problem3}
\begin{split}
\begin{cases}
 u_{tt}(x,t)-\Delta_{x}u(x,t)+bu_{t}(x,t)+mu(x,t)=0,\\
 u(x,0)=0,\,\,\,\,x\in X,\\
 u_{t}(x,0)=v(x),\,\,\,\,x\in X.
 \end{cases}
 \end{split}
\end{equation}
For simplicity, let us denote 
$J(f(u))(x,t):=\int_{0}^{t}[Kf(u)](x,t-s)ds.$
Firstly, let us show that the 
\begin{equation}\label{LuLv}
    \|Lu-Lv\|_{Z}\leq c\|u-v\|_{Z}, \,\,\,\text{for\,\,\,all}\,\,\,u,v\in Z,
\end{equation}
where $c\in(0,1)$.

For all $t>0$, $u,v\in Z$ and by using  \eqref{f1}, H\"{o}lder's inequality with $\frac{2(p-1)}{2p}+\frac{2}{2p}=1$, Gagliardo-Nirenberg's inequality \eqref{GNPDE}, Young's inequality and equivalence of the norms, we compute 
\begin{equation}\label{focen1}
\begin{split}
    &\|f(u)(\cdot,t)-f(v)(\cdot,t)\|^{2}_{L^{2}(X)}\leq\int_{X}|f(u)(x,t)-f(v)(x,t)|^{2}dx\\&
    \stackrel{\eqref{f1}}\leq C\int_{X}\left[|u(x,s)|^{2(p-1)}+|v(x,s)|^{2(p-1)}\right]|u(x,s)-v(x,s)|dx\\&
    \stackrel{HI}\leq  C\left[\|u(\cdot,t)\|^{2(p-1)}_{L^{2p}(X)}+\|v(\cdot,t)\|^{2(p-1)}_{L^{2p}(X)}\right]\|u(\cdot,t)-v(\cdot,t)\|^{2}_{L^{2p}(X)}\\&
    \stackrel{\eqref{GNPDE}}\leq C\left[\|u\|^{2(p-1)a}_{H^{1,2}(X)}\|u\|_{L^{2}(X)}^{2(p-1)(1-a)}+\|v\|^{2(p-1)a}_{H^{1,2}(X)}\|v\|_{L^{2}(X)}^{2(p-1)(1-a)}\right]\|u-v\|^{2a}_{H^{1,2}(X)}\|u-v\|_{L^{2}(X)}^{2(1-a)}\\&
    \stackrel{YI}\leq C \left[\|u\|^{2(p-1)}_{H^{1,2}(X)}+\|u\|_{L^{2}(X)}^{2(p-1)}+\|v\|^{2(p-1)}_{H^{1,2}(X)}+\|v\|_{L^{2}(X)}^{2(p-1)}\right]\left[\|u-v\|^{2}_{H^{1,2}(X)}+\|u-v\|_{L^{2}(X)}^{2}\right]\\&
\leq C\left[\|\Delta^{\frac{1}{2}}u\|^{2(p-1)}_{L^{2}(X)}+\|u\|_{L^{2}(X)}^{2(p-1)}+\|\Delta^{\frac{1}{2}}v\|^{2(p-1)}_{L^{2}(X)}+\|v\|_{L^{2}(X)}^{2(p-1)}\right]\\&
\times\left[\|\Delta^{\frac{1}{2}}(u-v)\|^{2}_{L^{2}(X)}+\|u-v\|_{L^{2}(X)}^{2}\right]\\&
\leq C\left[\left(\|\Delta^{\frac{1}{2}}u\|_{L^{2}(X)}+\|u\|_{L^{2}(X)}\right)^{2(p-1)}+\left(\|\Delta^{\frac{1}{2}}v\|_{L^{2}(X)}+\|v\|_{L^{2}(X)}\right)^{2(p-1)}\right]\\&
\times\left(\|\Delta^{\frac{1}{2}}(u-v)\|_{L^{2}(X)}+\|u-v\|_{L^{2}(X)}\right)^{2}.
\end{split}
\end{equation}
By using definition of the norm on the space $Z$, for all $u\in Z$ we have
\begin{equation}
    (1+t)^{-\frac{1}{2}}e^{\delta t}(\|\Delta^{\frac{1}{2}}u\|_{L^{2}(X)}+\|u\|_{L^{2}(X)})\leq \|u\|_{Z}\leq M,
\end{equation}
then we get 
\begin{equation}
    \|\Delta^{\frac{1}{2}}u\|_{L^{2}(X)}+\|u\|_{L^{2}(X)}\leq (1+t)^{\frac{1}{2}}e^{-\delta t}M.
\end{equation}
Putting this estimate in \eqref{focen1}, we get
\begin{equation}\label{focen2}
\begin{split}
    \|f(u)(\cdot,t)-f(v)(\cdot,t)\|^{2}_{L^{2}(X)}
&\leq C\left[\left(\|\Delta^{\frac{1}{2}}u\|_{L^{2}(X)}+\|u\|_{L^{2}(X)}\right)^{2(p-1)}+\left(\|\Delta^{\frac{1}{2}}v\|_{L^{2}(X)}+\|v\|_{L^{2}(X)}\right)^{2(p-1)}\right]\\&
\times\left(\|\Delta^{\frac{1}{2}}(u-v)\|_{L^{2}(X)}+\|u-v\|_{L^{2}(X)}\right)^{2}\\&
\leq C(1+t)^{p-1}e^{-2\delta (p-1)t}M^{2p-2}\left(\|\Delta^{\frac{1}{2}}(u-v)\|_{L^{2}(X)}+\|u-v\|_{L^{2}(X)}\right)^{2}\\&
\leq C(1+t)^{p}e^{-2\delta pt}M^{2p-2}\|u-v\|^{2}_{Z}.
\end{split}
\end{equation}

for all $t>0$ and $u,v\in Z$. 

Then, via the equivalence of the norms on $H^{1,2}$-spaces,  we have
\begin{equation}\label{Lnorm}
\begin{split}
   &\|Lu-Lv\|_{L^{2}(X)}+\|\Delta^{\frac{1}{2}}(Lu-Lv)\|_{L^{2}(X)}+\left\|\frac{\partial}{\partial t}(Lu-Lv)\right\|_{L^{2}(X)}\\&=\|J(f(u))-J(f(v))\|_{L^{2}(X)}+\|\Delta^{\frac{1}{2}}(J(f(u))-J(f(v)))\|_{L^{2}(X)}\\&
    \quad\quad\quad+\left\|\frac{\partial}{\partial t}\left[J(f(u))-J(f(v))\right]\right\|_{L^{2}(X)}\\&
    \leq C\|J(f(u))-J(f(v))\|_{H^{1,2}(X)}+\left\|\frac{\partial}{\partial t}\left[J(f(u))-J(f(v))\right]\right\|_{L^{2}(X)}.
\end{split}
\end{equation}
Let us now compute the first norm that appears in the right hand side of the last estimate. First, we note that 
$$Kf(u)(x,t)-Kf(v)(x,t)=K(f(u)-f(v))(x,t).$$
By combining the above equality with the Cauchy-Schwarz inequality, Lemma \ref{prepPDE} and \eqref{focen2},  we obtain 
\begin{equation}\label{ocen55}
\begin{split}
     \|J(f(u))-J(f(v))\|^{2}_{H^{1,2}(X)}&= \int_{X}\left|\int_{0}^{t}(\xi^2-\Delta_x)^{\frac{1}{2}}[K(f(u))-K(f(v))]ds\right|^{2}dx\\&
     \stackrel{C-S\,I}\leq  Ct\int_{0}^{t}\int_{X}\left|(\xi^2-\Delta_x)^{\frac{1}{2}}[K(f(u))-K(f(v))]\right|^{2}dxds\\&
     =Ct\int_{0}^{t}\int_{X}\left|(\xi^2-\Delta_x)^{\frac{1}{2}}K[f(u)-f(v)]\right|^{2}dxds\\&
     \stackrel{\eqref{ocensol}}\leq Cte^{-2\delta t}\int_{0}^{t}e^{2\delta s}\|f(u)-f(v)\|^{2}_{L^{2}(X)}ds\\&
     \stackrel{\eqref{focen2}}\leq C M^{2p-2}te^{-2\delta t}\int_{0}^{t}(1+s)^{p}e^{2\delta s}e^{-2\delta p}\|u-v\|^{2}_{Z}ds\\&
     =C M^{2p-2}te^{-2\delta t}\|u-v\|^{2}_{Z}\int_{0}^{t}(1+s)^{p}e^{-2\delta(p-1)s}ds\\&
     \leq CM^{2p-2}te^{-2\delta t}\|u-v\|^{2}_{Z}.
\end{split}
\end{equation}
We know that $Kf(u)(x,t)$ is a solution of  \eqref{problem3}, then $K(f(u))(x,0)=0$. By using this fact, we have
\begin{equation}
    \frac{\partial}{\partial t}J(f(u))(x,t-s)= \frac{\partial}{\partial t}\int_{0}^{t}[K(f)](x,t-s)ds=\int_{0}^{t}\frac{\partial}{\partial t}[K(f)](x,t-s)ds,
\end{equation}
and therefore, similarly to \eqref{ocen55}, with the use of Lemma \ref{prepPDE} (with $s=0$) and \eqref{focen2}, we obtain
\begin{equation}\label{ocen555}
\begin{split}
     \left\|\frac{\partial}{\partial t}(J(f(u))-J(f(v)))\right\|^{2}_{L^{2}(X)}&= \int_{X}\left|\int_{0}^{t}\frac{\partial}{\partial t}[K(f(u))-K(f(v))]ds\right|^{2}dx\\&
     \leq \int_{X}t\int_{0}^{t}\left|\frac{\partial}{\partial t}[K(f(u))-K(f(v))]\right|^{2}dsdx\\&
     = Ct\int_{0}^{t}\int_{X}\left|\frac{\partial}{\partial t}[K(f(u))-K(f(v))]\right|^{2}dxds\\&
     = Ct\int_{0}^{t}\int_{X}\left|\frac{\partial}{\partial t}K[f(u)-f(v)]\right|^{2}dxds\\&
     \stackrel{\eqref{ocensolt}}\leq Cte^{-2\delta t}\int_{0}^{t}e^{2\delta s}\|f(u)-f(v)\|^{2}_{L^{2}(X)}ds\\&
     \stackrel{\eqref{focen2}}\leq C M^{2p-2}te^{-2\delta t}\int_{0}^{t}(1+s)^{p}e^{2\delta s}e^{-2\delta p}\|u-v\|^{2}_{Z}ds\\&
     =C M^{2p-2}e^{-2\delta t}t\|u-v\|^{2}_{Z}\int_{0}^{t}(1+s)^{p}e^{-2\delta(p-1)s}ds\\&
     \leq CM^{2p-2}te^{-2\delta t}\|u-v\|^{2}_{Z}.
\end{split}
\end{equation}
Putting \eqref{ocen55} and \eqref{ocen555} in \eqref{Lnorm}, we have
\begin{equation}
\begin{split}
    &\|Lu-Lv\|_{L^{2}(X)}+\|\Delta^{\frac{1}{2}}(Lu-Lv)\|_{L^{2}(X)}+\left\|\frac{\partial}{\partial t}(Lu-Lv)\right\|_{L^{2}(X)}\\&
    \leq C\|J(f(u))-J(f(v))\|_{H^{1,2}(X)}+\left\|\frac{\partial}{\partial t}\left[J(f(u))-J(f(v))\right]\right\|_{L^{2}(X)}\\&
    \stackrel{\eqref{ocen55},\eqref{ocen555}}\leq CM^{p-1}t^{\frac{1}{2}}e^{-\delta t}\|u-v\|_{Z}.
\end{split}
\end{equation}
From the last fact we establish,
\begin{equation}
\begin{split}
   &(1+t)^{-\frac{1}{2}}e^{\delta t}\left[\|Lu-Lv\|_{L^{2}(X)} +\|\Delta^{\frac{1}{2}}(Lu-Lv)\|_{L^{2}(X)}+\left\|\frac{\partial}{\partial t}(Lu-Lv)\right\|_{L^{2}(X)}\right]\\&
   \leq C(1+t)^{-\frac{1}{2}}e^{\delta t}M^{p-1}t^{\frac{1}{2}}e^{-\delta t}\|u-v\|_{Z}\\&
   =C\left(\frac{t}{1+t}\right)^{\frac{1}{2}}M^{p-1}\|u-v\|_{Z}\\&
   \leq CM^{p-1}\|u-v\|_{Z}\\&
   =C_{1}M^{p-1}\|u-v\|_{Z},
   \end{split}
\end{equation}
and by taking supremum, we get
\begin{equation}\label{lastLuLv}
  \|Lu-Lv\|_{Z}\leq C_{1}M^{p-1}\|u-v\|_{Z}.
\end{equation}
By choosing $M^{p-1}=\frac{c}{C_{1}}$, where $c\in(0,1)$, we show \eqref{LuLv}, it means
\begin{equation}\label{ocenc1}
    \|Lu-Lv\|_{Z}\leq c\|u-v\|_{Z}.
\end{equation}

After that, let us prove 
\begin{equation}
    \|Lu\|_{Z}\leq M,\,\,\,\,\,\forall u\in Z.
\end{equation}

By taking $v=0$ and $f(0)=0$ in \eqref{lastLuLv}, we get 
\begin{equation}
\|Lu-L0\|_{Z}=\|Lu-\psi\|_{Z}\leq C_{1}M^{p-1}\|u\|_{Z}\leq C_{1}M^{p}.
\end{equation}
By Lemma \ref{prepPDE} and assumption of this theorem , we have 
\begin{equation}
    \|\psi\|_{Z}\leq C_{2}(\|u_{0}\|_{H^{1,2}(X)}+\|u_{1}\|_{L^{2}(X)})\leq C_{2}\varepsilon.
\end{equation}
By combining these facts with choosing $\varepsilon\leq \frac{M(1-c)}{C_{2}}=\varepsilon_{0}$ where $c$ is defined in \eqref{ocenc1}, we get
\begin{equation}
\begin{split}
    \|Lu\|_{Z}=\|Lu-\psi+\psi\|_{Z}&\leq \|Lu-\psi\|_{Z}+\|\psi\|_{Z}\\&
    \leq C_{1}M^{p}+C_{2}\varepsilon\\&
    \leq cM+C_{2}\varepsilon_{0}\\&
    =cM+C_{2}\frac{M(1-c)}{C_{2}}\\&
    =M,
\end{split}
\end{equation}
completing the proof the $L$ is a contraction on $Z.$ Therefore, Banach fixed point theorem guarantee ensure the existence of a global solution.

The uniqueness of the solution can be easily deduced using the above estimates. Indeed, suppose that there are two solutions $u,v \in Z$ of the Cauchy problem \eqref{problem1}. Set $\eta=u-v$ and fix $t^*>0.$ Since $u$ and $v$ are fixed point of $L$, using the previous computations \eqref{focen2}, \eqref{ocen55} and \eqref{ocen555} we obtain that
\begin{align} \label{groa}
 \nonumber   \|\eta(t, \cdot)\|_{H^{1,2}(X)}^2 +\|\partial_t \eta(t, \cdot)\|_{L^2(X)}^2 &= \big\| J(f(u))(t, \cdot)-J(f(v))(t, \cdot) \big\|_{H^{1,2}(X)}^2\\& \nonumber\quad +\big\|\partial_t (J(f(u))-J(f(v)))(t, \cdot)\big\|_{L^2(X)}^2\\ \nonumber&\leq C t e^{-2 \delta t} \int_0^t e^{2 \delta s} \|f(u(s, \cdot))-f(v(s, \cdot))\|_2^2 ds \\& \leq C t e^{-2 \delta t} \int_0^t \left( \|u(s, \cdot)\|_{H^{1,2}(X)}^{2(p-1)}+\|v(s, \cdot)\|_{H^{1,2}(X)}^2  \right) \|\eta(s, \cdot)\|_{H^{1,2}(X)}^2 ds. 
\end{align}
So, from the definition of $Z$ and using the equivalence of the norms \eqref{equiso} we see that  the map $t \mapsto \|u(s, \cdot)\|_{H^{1,2}(X)}^{2(p-1)}+\|v(s, \cdot)\|_{H^{1,2}(X)}^2  $  is continuous and therefore, it is bounded on the compact set $[0, t^*].$ So inequality \eqref{groa} takes the form 
$$\|\eta(t, \cdot)\|_{H^{1,2}(X)}^2 +\|\partial_t \eta(t, \cdot)\|_{L^2(X)}^2\leq C t e^{-2 \delta t} \int_0^t \|\eta(s, \cdot)\|_{H^{1,2}(X)}^2+\|\partial_t \eta(t, \cdot)\|_{L^2(X)}^2 ds,$$

and, so by Gronwall's lemma, we obtain 
$$\|\eta(t, \cdot)\|_{H^{1,2}(X)}^2 +\|\partial_t \eta(t, \cdot)\|_{L^2(X)}^2=0,$$ for all $0<t<t^*,$ which eventually shows that $\eta=0$ in $[0, t^*] \times X.$ Finally, since $t^*$ is arbitrary it implies that $\eta \equiv 0$ and hence $u=v.$ This completes the proof of the theorem. 
 \end{proof}

\end{document}